%% file: main.tex
\pdfoutput=1
\documentclass[12 pt]{amsart}
\title{Linear Response for Contracting on Average Iterated Function Systems}
\author{Jianning Fu}
\date{\today}
\usepackage[letterpaper,margin=1in]{geometry}
\usepackage[normalem]{ulem}
\usepackage{xcolor}
\usepackage{graphicx}
\usepackage[utf8]{inputenc}
\usepackage[T1]{fontenc}
\usepackage{csquotes}
\usepackage{amsmath}
\usepackage{amsthm}
\usepackage{amsfonts}
\usepackage{mathrsfs}
\usepackage{amssymb} 
\usepackage{fancyhdr}
\usepackage{enumerate}
\usepackage{tgschola} 
\usepackage[
backend=biber,
style=alphabetic,
sorting=ynt
]{biblatex}
\renewbibmacro{in:}{}
\AtEveryBibitem{%
  \clearfield{number}%
  \clearfield{doi}%
  \clearfield{url}%
  \clearfield{isbn}%
  \clearfield{issn}%
  \clearfield{note}%
  \clearlist{publisher}%
  \clearlist{location}%
}
\usepackage{xspace}

\newtheorem{thm}{Theorem}[section]

\newtheorem{remark}{Remark}[section]

\newtheorem{prop}{Proposition}[section]
\newtheorem{lm}{Lemma}[section]

\numberwithin{equation}{section}

\def\DS{\displaystyle}

\usepackage[
colorlinks,
breaklinks,
unicode
]{hyperref}

\begin{document}
\maketitle	
\begin{abstract}
   Consider the following probabilistic contracting on average iterated function system 
$$\Phi = \left\{f_i (x) = \lambda_i x + d_i,\;i=1,2 ;\;\; p = \left(\frac{1}{2} , \frac{1}{2}\right) \right\},$$
where the contraction ratios $\lambda_1 , \lambda_2$ are such that $0<\lambda_1<1<\lambda_2$ and  $\lambda_1\lambda_2<1$. Denote by $\mu_{\lambda_1,\lambda_2}$ its stationary measure. We study the differentiability of 
$$(\heartsuit)\quad\quad\quad\quad\quad
\lambda_1\mapsto\int_{\mathbb{R}}\phi(x)\,d\mu_{\lambda_1,\lambda_2}(x), $$
 where $\phi$ is a suitable test function. We establish three cases where $(\heartsuit)$
 is differentiable and show the derivative coincides with the one obtained by taking formal derivative, which can be generalized to the case of multiple maps with different probabilities.
 We also present sufficient conditions under which there exists a smooth, bounded test function $\phi$ so that  $(\heartsuit)$ is not differentiable. 
\end{abstract}

	\bigskip

\section{Introduction and Statement of Results}\label{Section: Introduction}

To state the main results, let us first recall some basic facts of self-similar sets and measures. Let $m\ge2 $ and $d\ge 1$ be positive integers and consider $m$ maps $\mathcal{F} = (f_1, \cdots f_m)$ acting on $\mathbb{R}^d$, with each map $f_i$ having a contraction ratio $0<r_i<1$. That is,

$$\forall x,y\in\mathbb{R}^d , \quad ||f_i (x) - f_i (y)|| = r_i ||x-y||,\quad i=1,\cdots m.$$
Such a tuple of maps $\mathcal{F}$ is called a \textbf{self-similar iterated function system} (self-similar \textbf{IFS} or \textbf{IFS} for short). 

It is well known \cite{hutchinson1981fractals} that there exists a unique compact set $K = K_{\mathcal{F}}$, called the \textbf{attractor} of $\mathcal{S}$, such that
$$K = \bigcup_{i=1}^{m} f_i (K).$$

Throughout, we always assume that not all $f_i$'s share the same fixed point, so that the attractor of $\mathcal{S}$ is not a single point. 

One can view the attractor via symbolic dynamics as follows. Define $\mathscr{A} = \{1,\cdots m \}$ to be the alphabet, $\Sigma = \mathscr{A}^{\mathbb{N}}$ the shift space and $\sigma:\Sigma\to \Sigma$ the left shift. Consider the map $\Pi = \Pi_{\mathcal{S}}: \Sigma\to \mathbb{R}^d$ that sends any word $i = (i_1,i_2,\cdots)\in\Sigma$ to
$$\Pi (i) = \lim_{n\to\infty} f_{i_1} \circ \cdots f_{i_n} (0) =: \lim_{n\to\infty} f_{i_1 \cdots i_n} (0).$$

Note that since each $f_i$ is contracting, $\DS \lim_{n\to\infty} f_{i_1} \circ \cdots f_{i_n} (x)$ exists and is independent of the choice of $x$, justifying the above definition. The attractor $K$ is exactly the image of the projection $\Pi$.

An additional probability structure can be put on IFS. Namely, if $\mathcal{F} = (f_1, \cdots,f_m)$ is a self-similar IFS and $p = (p_1,\cdots , p_m)$ is a probability vector, then we call $\Phi = (\mathcal{F},p)$ a \textbf{probabilistic IFS}. A Borel probability measure $\mu$ on $\mathbb{R^d}$ is called the \textbf{stationary measure} for the IFS $\Phi$, if 
$$\mu = \sum_{j=1}^m p_j (f_j)_{*} \mu.$$
That is, if we consider the Markov process where for any $x\in \mathbb{R}^d$, it is mapped to $f_j(x)$ with probability $p_j$, then $\mu$ is the corresponding stationary measure. It is well known that such measure is unique, so it is indeed ``the'' stationary measure. Moreover, it can also be described symbolically. Define $\nu = p^{\mathbb{N}}$ to be the Bernoulli measure on $\Sigma$. Then 
$\DS\mu = \Pi_* \nu.$
We refer to \cite{BSSBook}, \cite{Falc} for the aforementioned properties as well as more discussions on (probabilistic) IFS.

The notion of attractor and stationary measure can be generalized to the case where not all $r_i$ are less than $1$. However, we require that the IFS \textbf{contracts on average}, meaning that
$$\sum_{j=1}^m p_j \log r_j <0.$$

In this case, there is also a unique invariant measure $\mu$ \cite{kifer2012ergodic}, which is still characterized by $\mu = \Pi_* \nu$. One slight difference is that in the contracting on average case, the projection $\Pi$ is only defined for $\nu$-almost every words. In addition, its support is no longer compact. 

Many results in the contracting case can be generalized to this stationary measure, including but not restricted to its dimension, density, order of tail and absolute continuity (with respect to Hausdorff measure of suitable dimension) \cite{guivarc2016spectral}\cite{kittle2025polynomial} \cite{gelfert2023contracting} \cite{kittle2024absolute} \cite{kittle2025dimension} \cite{Linde14Random}.

In the present work, we  study the linear response problem for stationary measures of a contracting on average IFS. In order to keep the notation simple,
we restrict ourselves to the following system 
$$\Phi = \left\{f_i (x) = \lambda_i x + d_i,\;i=1,2 ;\;\; p = \left(\frac{1}{2} , \frac{1}{2}\right) \right\},$$
where the contraction ratios $\lambda_1 , \lambda_2$ are such that $0<\lambda_1<1<\lambda_2$ and  $\lambda_1\lambda_2<1$.  We discuss in  Section \ref{Section: Extensions and open problems} more general setting where our approach works. 

Denote by $\mu_{\lambda_1,\lambda_2}$ the stationary measure of this IFS. Consider the map
\begin{equation}
\label{SMAv}
h = h_\phi:\epsilon\mapsto \int \phi \,d\mu_{\lambda_1+\epsilon,\lambda_2} ,
\end{equation}
defined for $\epsilon$ satisfying $(\lambda_1+\epsilon) \lambda_2<1$, where $\phi$ is a function of a certain regularity. By linear response  theory, we mean studying sufficient regularity conditions on $\phi$ so that $h_{\phi}'(0)$ exists 
and calculating the value of the derivative.

 For many smooth  or piecewise smooth dynamical systems, there is a well-developed linear response theory, originated in Ruelle's work \cite{ruelle1997differentiation} and developed by many others including but not restricted to \cite{dolgopyat2004differentiability, gouezel2006banach, baladi2008linear, baladi2007anisotropic}. There are also examples of failure of linear response, see  for example
\cite{baladielse, BBS15, dLS18}. However, not much is known about the analogous question for contracting (on average) IFS, and it is tempting to develop an analogous theory. Our method is based on the tail property of $\mu_{\lambda_1,\lambda_2}$ and large deviation theory, and 
may work in a more general setting.

In contrast, in a separate work we will show that for Bernoulli convolutions, to get (almost everywhere) differentiability of $h_\phi$ we only need H\"older continuous observable $\phi$. Moreover, differentiability with H\"older observable is related to the dimension of the measure.

   To state the main results we need additional notation. By the symbolic characterization, one get the symbolic space $\Sigma = \{1,2\}^{\mathbb{N}}$ equipped with the probability measure $\nu = (\frac{1}{2},\frac{1}{2})^{\mathbb{N}}$ and the projection
   $$\Pi(i_1,i_2,\cdots) = \sum_{j=0}^{\infty} d_{i_j} \lambda_{i_1}\cdots \lambda_{i_{j-1}}=:X.$$

   Moreover, $\mu_{\lambda_1,\lambda_2} = \Pi_{*} \nu$ will be the law of the random variable $X$.

If $\epsilon$ is sufficiently small so that $(\lambda_1+\epsilon)\lambda_2<1$,the random variable
$$X(\epsilon):=\sum_{j=0}^{\infty} d_{i_j} \lambda_{i_1}(\epsilon)\cdots \lambda_{i_{j-1}}(\epsilon) $$
is well-defined almost everywhere, where $\lambda_1(\epsilon) = \lambda_1+\epsilon$ and $\lambda_2(\epsilon) = \lambda_2$, and has law $\mu_{\lambda_1+\epsilon,\lambda_2}$.
With these notations, we can write
$$\int_{\mathbb{R}}\phi\,d\mu_{\lambda_1+\epsilon,\lambda_2} = \int_{\Sigma}\phi(X(\epsilon))\,d\nu.$$
Since $X(\epsilon)$ is a formal power series in $\epsilon$, one can take derivative formally. It is therefore natural to ask whether
\begin{equation}\label{Formal derivative equals actual derivative}
    h^{(l)}(0): = \frac{d^l h}{d\epsilon^l}(0)  =  \int_{\Sigma} \frac{d^l}{d\epsilon^l}(\phi\circ
 X (\epsilon))|_{\epsilon=0}\,d\nu ,\; l=1,2,\cdots
\end{equation}
under suitable assumptions on the test function $\phi$ and $\lambda_1,\lambda_2$. We will show three instances of equality \eqref{Formal derivative equals actual derivative} being true,.

 On the other hand, if $\mu_{\lambda_1,\lambda_2}$ is too concentrated on some intervals with small Lebesgue measure, it is possible to construct a $C^\infty$, bounded test function $\phi$ whose derivative is large around these points, so that $h_\phi$ is not differentiable at $0$ and, in particular,  equation \eqref{Formal derivative equals actual derivative} fails. We will give two sufficient conditions on $\lambda_1,\lambda_2$ for such $\phi$ to exist.

 Now we state the main results. We start with three cases where equation \eqref{Formal derivative equals actual derivative} is true.

\begin{thm}\label{Differentiability of Stationary measure}
Suppose $r\in\mathbb{N}$ and $0<\lambda_1<1<\lambda_2$ are such that $\DS \frac{\lambda_1^r+\lambda_2^r}{2}<1$. Then for any function $\phi\in C^r(\mathbb{R})$ with $||\phi^{(r)}||_{C^0}<\infty$, the average $h(\epsilon):= \int\phi\,d\mu_{\lambda_1+\epsilon,\lambda_2}$ is differentiable at $0$ up to order $r$ and the derivatives are given by    

\begin{align}\label{Equation: LR with moment condition and bounded derivative}
    \nonumber    h^{(l)}(0) &= \int_{\Sigma} \frac{\,d^l}{\,d \epsilon^l} (\phi\circ X(\epsilon))|_{\epsilon=0}\,d\nu \\
        &= \int_{\Sigma} \sum_{k_1 + 2k_2 + \cdots lk_l = l} L(k_1, \cdots, k_l) \phi^{(k)} (X) \prod_{j=1}^{l} (X^{(j)})^{k_j}\,d\nu,\; l=1,2,\cdots ,r.
    \end{align}

 The second equality above is simply the Faa di Bruno's formula for higher derivative of composition of two functions: the summation is over all non negative integers $k_1, \cdots ,k_l$ with $k_1 + 2k_2 + \cdots lk_l = l$. Moreover, $L(k_1,\cdots,k_l) := \frac{l!}{k_1 ! (1!)^{k_1} \cdots k_l! (l!)^{k_l}}$, $k := k_1 + \cdots + k_l$ 
 and $X^{(j)}$ is defined as 
   $$X^{(j)} := \frac{\,d^j}{\,d \epsilon^j}X(\epsilon)|_{\epsilon=0} = \frac{j!}{\lambda_1^j}\sum_{m=1}^{\infty} \lambda_{i_1} \cdots \lambda_{i_m} \binom{ o(m)}{j} $$
   where $o(m)$ denotes the number of $ 1$'s among $i_1,\cdots,i_m$.
\end{thm}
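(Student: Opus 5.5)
Our approach is to justify differentiating under the integral sign by controlling the $\epsilon$-derivatives of $X(\epsilon)$ in $L^r(\nu)$, uniformly in a small neighbourhood of $\epsilon=0$. Write $X(\epsilon)=\sum_{m\ge0} d_{i_{m+1}}\lambda_{i_1}(\epsilon)\cdots\lambda_{i_m}(\epsilon)$. Each summand is a polynomial in $\epsilon$, namely $d_{i_{m+1}}(\lambda_1+\epsilon)^{o(m)}\lambda_2^{m-o(m)}$. Its $j$-th derivative is therefore $d_{i_{m+1}}\frac{o(m)!}{(o(m)-j)!}(\lambda_1+\epsilon)^{o(m)-j}\lambda_2^{m-o(m)}$, which at $\epsilon=0$ gives the stated formula for $X^{(j)}$, up to the normalisation of the $d_i$ absorbed in the paper's notation.

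\textbf{Step 1 (moment bounds).} Since $\frac{\lambda_1^r+\lambda_2^r}{2}<1$ is an open condition, we fix $\delta>0$ and $\eta>1$ small enough that $\rho:=\frac{(\eta(\lambda_1+\delta))^r+(\eta\lambda_2)^r}{2}<1$. We use $\binom{o}{j}\le C_{j,\eta}\eta^{o}$ and $(\lambda_1+\epsilon)^{o-j}\le \lambda_1^{-j}(\lambda_1+\delta)^{o}$ for $|\epsilon|\le\delta$, with $\delta<\lambda_1$. This gives the pathwise domination
\[
\sup_{|\epsilon|\le\delta}|X^{(j)}(\epsilon)|\le C\sum_{m\ge0}Z_m=:Y,\qquad Z_m:=\prod_{t=1}^{m}\eta\,\lambda_{i_t}(\delta),
\]
where $\lambda_1(\delta)=\lambda_1+\delta$ and $\lambda_2(\delta)=\lambda_2$. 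By independence, $\|Z_m\|_{L^r}=\rho^{m/r}$. If $r\ge1$, Minkowski's inequality gives $\|Y\|_{L^r}\le\sum_m\rho^{m/r}<\infty$, so $Y\in L^r(\nu)$, and this holds for all $j\le r$ simultaneously. Hence each $X^{(j)}(\epsilon)$ is, almost surely, the termwise-differentiated series, which converges uniformly on $|\epsilon|\le\delta$. So $\epsilon\mapsto X(\epsilon)$ is $C^r$ there for $\nu$-a.e.\ word.

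\textbf{Step 2 (Faà di Bruno and domination).} For a.e.\ word, $\epsilon\mapsto\phi(X(\epsilon))$ is $C^r$ on $|\epsilon|\le\delta$. Its $l$-th derivative is given by Faà di Bruno's formula, with terms $\phi^{(k)}(X(\epsilon))\prod_j (X^{(j)}(\epsilon))^{k_j}$ and total weight $\sum_j jk_j=l$.

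For $k=r$, the factor $\phi^{(r)}$ is bounded by hypothesis. For $k<r$ we only know that $\phi^{(k)}\in C^{r-k}$ with bounded $r$-th derivative, so we use the Taylor growth bound $|\phi^{(k)}(x)|\le C(1+|x|)^{r-k}$. Together with $|X(\epsilon)|\le CY$, each term is then bounded by $C(1+Y)^{r-k}Y^{k}\le C(1+Y)^{r}$, because $\prod_j|X^{(j)}|^{k_j}\le CY^{k}$ regardless of $j$. This dominating function lies in $L^1(\nu)$ by Step 1. We then apply the dominated convergence theorem, or equivalently the mean value theorem combined with dominated convergence, inductively on $l=1,\dots,r$. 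This shows that $h^{(l)}(\epsilon)$ exists on $|\epsilon|<\delta$ and equals $\int\frac{d^l}{d\epsilon^l}\phi(X(\epsilon))\,d\nu$. The inductive step at $l=r$ needs only that $\frac{d^{r-1}}{d\epsilon^{r-1}}\phi(X(\epsilon))$ is differentiable with a derivative dominated uniformly in $\epsilon$, which is exactly the bound above. Setting $\epsilon=0$ gives \eqref{Equation: LR with moment condition and bounded derivative}.

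\textbf{Main obstacle.} The delicate point is Step 1: proving the uniform-in-$\epsilon$ $L^r$ domination. The binomial factors $\binom{o(m)}{j}$ grow polynomially in $m$, and the open moment condition has to absorb both this growth and the perturbation $\lambda_1\to\lambda_1+\delta$. Our plan is the factor-$\eta$ trick above. A secondary point is that the growth bound on $\phi^{(k)}$ for $k<r$ consumes exactly $r$ moments in total, so $\frac{\lambda_1^r+\lambda_2^r}{2}<1$ is precisely what is needed and nothing is wasted.
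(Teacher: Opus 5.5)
Your argument is correct, and it follows a genuinely different and considerably shorter route than the paper's.

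\textbf{The paper's route.} The paper never dominates the $\epsilon$-dependent integrand directly. For $r=1$ it replaces $X(\epsilon)$ by the truncation $X_n(\epsilon)$, with error $C\theta^n$ obtained from $\mathbb{E}[\lambda_{i_1}\cdots\lambda_{i_m}]=(\frac{\lambda_1+\lambda_2}{2})^m$. It then expands $X_n(\epsilon)-X_n=\sum_t a_{n,t}\epsilon^t$ and couples $n\asymp\log\frac{1}{|\epsilon|}$ so that the higher-order terms vanish. Higher $l$ are handled by induction, using $|\phi^{(k)}(x)|\le Mx^{r-k}$ on $x\ge 1$. This step recycles the estimates of Section \ref{Section: Smoothness of Moments}: H\"older with an auxiliary $\rho$, the binomial splitting of the tail into a past factor $A_n$ and a future factor $B_n$, and the fact that $\frac{\lambda_1^u+\lambda_2^u}{2}<1$ for all $u\in(0,r]$.

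\textbf{Your route.} Your $\eta$-absorption trick bounds every $X^{(j)}(\epsilon)$, for $j\le r$ and $|\epsilon|\le\delta$, by the single variable $CY$. Minkowski in $L^r$ then gives $Y\in L^r$, using only the condition at exponent $r$ together with its openness. After that, differentiating under the integral sign is routine.

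\textbf{What each approach gives.} Your version yields more than the statement:
\begin{itemize}
\item $h\in C^r$ on a whole neighbourhood of $0$;
\item no conjugation to $d_1=d_2=1$ and no use of $X\ge 1$, since you only use $|d_i|$;
\item an essentially verbatim extension to several maps under $\sum_i p_i|\lambda_i|^r<1$.
\end{itemize}
The paper's truncation scheme instead gives a single framework shared with Theorems \ref{Smoothness for Finite Moment Condition} and \ref{infinite differentiability for compactly supported test function}. There a polynomial-moment dominating function such as $(1+Y)^r$ is not available: in the first case because of negative powers $X^{t-k}$ with $k>t$, in the second because only $\lambda_1\lambda_2<1$ is assumed while $\phi\in C_c^\infty$. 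For the present theorem alone, your direct domination suffices.

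One small check on Step 1: the bound $(\lambda_1+\epsilon)^{o(m)-j}\le \lambda_1^{-j}(\lambda_1+\delta)^{o(m)}$ is valid because only terms with $o(m)\ge j$ are nonzero and $0<\lambda_1+\epsilon\le\lambda_1+\delta$.
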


In the above theorem, the set of assumptions $\DS \frac{\lambda_1^r+\lambda_2^r}{2}<1$ and $||\phi^{(r)}||_{C^0}<\infty$ serve as some kind of ``moment'' condition to make sure that the right hand side of \eqref{Equation: LR with moment condition and bounded derivative} is finite. Another instance of such sufficient condition is the following, in which one gets infinite differentiability:
   
   \begin{thm}\label{Smoothness for Finite Moment Condition}
    Suppose $d_1=1,d_2=1$, $ 0<\lambda_1 <1<\lambda_2$ and $t>0$ are such that $\frac{\lambda_1^t+ \lambda_2^t}{2}<1$. Note that this implies $\lambda_1\lambda_2 <1$ and the stationary measure $\mu = \mu_{\lambda_1,\lambda_2}$ is well defined. Take the test function to be $\phi(x) = x^t,x\ge 1$ and define $h(\epsilon) = \int \phi(x) \,d\mu_{\lambda_1 + \epsilon , \lambda_2}$ for $\epsilon$ small. Then $h$ is infinitely differentiable at $0$ and the derivative coincide with the  formal derivative:
    \begin{align*}
        h^{(l)}(0) &= \int_{\Sigma} \frac{\,d^l}{\,d \epsilon^l} (\phi\circ X(\epsilon))|_{\epsilon=0}\,d\nu \\
        &= \int_{\Sigma} \sum_{k_1 + 2k_2 + \cdots lk_l = l} L(k_1, \cdots, k_l) \phi^{(k)} (X) \prod_{j=1}^{l} (X^{(j)})^{k_j}\,d\nu,\quad \forall l\in \mathbb{N}.
    \end{align*}
    Here, all the notations are the same as in Theorem \ref{Differentiability of Stationary measure}.
\end{thm}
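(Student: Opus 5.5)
Since $d_1=d_2=1$ and all $\lambda_i>0$, the series defining $X(\epsilon)$ consists of positive terms:
$$X(\epsilon)=\sum_{m\ge 0}\lambda_{i_1}(\epsilon)\cdots\lambda_{i_m}(\epsilon)=\sum_{m\ge0}(\lambda_1+\epsilon)^{o(m)}\lambda_2^{m-o(m)}\ge 1 .$$
So $\phi(X)=X^t$ is well defined, and $\epsilon\mapsto X(\epsilon)$ is a power series in $\epsilon$ with nonnegative coefficients. The plan is to dominate everything by the single random variable $X(\delta)$ for a small fixed $\delta>0$. By continuity in $\delta$, the condition $\frac{(\lambda_1+\delta)^t+\lambda_2^t}{2}<1$ still holds for $\delta$ small. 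The first step is to show $\mathbb{E}[X(\delta)^t]<\infty$. If $t\le 1$, subadditivity gives $X^t\le\sum_m \prod\lambda^t$, whose expectation is the geometric series $\sum_m\big(\frac{(\lambda_1+\delta)^t+\lambda_2^t}{2}\big)^m$. If $t>1$, Minkowski's inequality in $L^t(\nu)$ gives $\|X\|_t\le\sum_m\|\prod_{j\le m}\lambda_{i_j}\|_t=\sum_m\big(\frac{(\lambda_1+\delta)^t+\lambda_2^t}{2}\big)^{m/t}<\infty$. This is the only place the moment condition enters.

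Next, for $|\epsilon|\le\delta/2$ and any $j$, the coefficients of $X$ in $\epsilon$ are nonnegative. Cauchy estimates for power series with nonnegative coefficients therefore give
$$|X^{(j)}(\epsilon)|\le \frac{j!}{(\delta/2)^j}X(\delta)\qquad\text{for } |\epsilon|\le \delta/2 .$$
Moreover $X(\epsilon)\ge1$, so for every $k$ we have $|\phi^{(k)}(X(\epsilon))|=C_{t,k}X(\epsilon)^{t-k}\le C_{t,k}\max(1,X(\delta)^{t})$. This holds even when $t-k<0$, precisely because $X(\epsilon)\ge 1$; this is where the restriction $x\ge1$ in the definition of $\phi$ is used. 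Plugging these bounds into Faà di Bruno's formula gives
$$\Big|\tfrac{d^l}{d\epsilon^l}\phi(X(\epsilon))\Big|\le C\,X(\epsilon)^{t-k}\prod_j X(\delta)^{k_j}\le C\,X(\delta)^{t-k}X(\delta)^{k}=C X(\delta)^t ,$$
using $X(\epsilon)\le X(\delta)$, $X(\epsilon)\ge1$, and $\sum_j k_j=k$. The factor $X^{-k}$ exactly cancels the product of the $k$ factors $X^{(j)}$, so the degree stays at $t$. The case $t-k\ge0$ follows the same way since $X(\epsilon)\le X(\delta)$.

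With this bound, $\sup_{|\epsilon|\le\delta/2}\big|\frac{d^l}{d\epsilon^l}\phi(X(\epsilon))\big|\le C_l X(\delta)^t\in L^1(\nu)$ for every $l$. On the full-measure set where $X(\delta)<\infty$, the map $\epsilon\mapsto\phi(X(\epsilon))$ is real analytic on $|\epsilon|<\delta$, since it is a power series composed with $x^t$ on $[1,\infty)$. Repeated differentiation under the integral sign, justified by the mean value theorem and dominated convergence, then gives $h\in C^\infty$ near $0$ with $h^{(l)}(0)=\int\frac{d^l}{d\epsilon^l}\phi(X(\epsilon))|_{\epsilon=0}d\nu$. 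Faà di Bruno's formula gives the stated expression. The identity $X^{(j)}=\frac{j!}{\lambda_1^j}\sum_m\lambda_{i_1}\cdots\lambda_{i_m}\binom{o(m)}{j}$ follows from termwise differentiation of the positive series, which is legitimate by the same domination.

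The main obstacle is the first step: integrability of $X(\delta)^t$ under only the $t$-th moment condition. The argument has to split into the cases $t\le1$ and $t>1$ as above, and it must be checked that $\delta$ can be chosen uniformly. The rest is bookkeeping, made possible because positivity turns the Cauchy bound and the $X^{-k}$ cancellation into clean pointwise domination.
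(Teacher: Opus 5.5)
\textbf{There is a genuine gap: your domination fails for the Fa\`a di Bruno terms with $k>t$.} These terms occur for every non-integer $t$ as soon as $l>t$, for instance the term $\phi^{(l)}(X)(X^{(1)})^l$. For integer $t$ they vanish, and your argument is fine there.

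When $k>t$ you have $t-k<0$, so $X(\epsilon)\le X(\delta)$ gives $X(\epsilon)^{t-k}\ge X(\delta)^{t-k}$. Your step $X(\epsilon)^{t-k}\prod_j X(\delta)^{k_j}\le X(\delta)^{t-k}X(\delta)^{k}$ therefore runs in the wrong direction. What your ingredients actually give is $X(\epsilon)^{t-k}\le 1$, hence the dominating function $C\,X(\delta)^k$.

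That function is not integrable in general. Since $X\ge\Lambda_m:=\lambda_{i_1}\cdots\lambda_{i_m}$ for every $m$, you get $\mathbb{E}[X^k]\ge\big(\frac{\lambda_1^k+\lambda_2^k}{2}\big)^m\to\infty$ whenever $\frac{\lambda_1^k+\lambda_2^k}{2}>1$. Because $\lambda_2>1$, this holds for all large $k$.

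The failure is already visible at $l=1$. Take $\lambda_1=0.1$, $\lambda_2=2.5$, $t=\frac12$: the hypothesis holds, yet $\mathbb{E}[X]=\infty$, and your bound for $\frac12X(\epsilon)^{-1/2}X^{(1)}(\epsilon)$ is $C\,X(\delta)$.

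The ``cancellation'' of $X^{-k}$ against the factors $X^{(j)}$ would need $|X^{(j)}(\epsilon)|\le C\,X(\epsilon)$ at the same parameter. That is false: $X^{(1)}/X$ is unbounded because of the weights $o(m)$. A Cauchy estimate only compares $X^{(j)}(\epsilon)$ with $X$ at a larger parameter, and a single global negative power of $X(\epsilon)$ cannot absorb that. So the real obstacle is this negative-power case, not the integrability of $X(\delta)^t$.

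\textbf{The missing idea is to cancel term by term, using $X(\epsilon)\ge\Lambda_m(\epsilon)$ for each $m$.} This is what the paper does in its $t<k$ cases, via $\tilde\lambda_i=\lambda_i^{t/k}$. Set $\beta=t/k\in(0,1)$ and use the explicit nonnegative series $|X^{(j)}(\epsilon)|\le\frac{j!}{\lambda_1(\epsilon)^j}\sum_m\Lambda_m(\epsilon)\binom{o(m)}{j}$. Let $K(m):=\sum_{j\le l}\binom{o(m)}{j}\le C(1+m^l)$. Then for $|\epsilon|\le\delta/2$, with $C$ absorbing $(\lambda_1-\delta/2)^{-j}$,
\[
X(\epsilon)^{t-k}\prod_j|X^{(j)}(\epsilon)|^{k_j}\le C\Big(\sum_m\frac{\Lambda_m(\epsilon)}{X(\epsilon)^{1-\beta}}K(m)\Big)^k\le C\Big(\sum_m\Lambda_m(\epsilon)^{\beta}K(m)\Big)^k\le C\Big(\sum_m\Lambda_m(\delta)^{\beta}K(m)\Big)^k .
\]
By Minkowski in $L^k$, the right-hand side is integrable, because $\mathbb{E}[\Lambda_m(\delta)^{\beta k}]=\big(\frac{(\lambda_1+\delta)^t+\lambda_2^t}{2}\big)^m$ decays geometrically.

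Use this bound for the terms with $k>t$ and your $C\,X(\delta)^t$ bound for $k\le t$. With that change, your uniform-domination plus dominated-convergence scheme does go through. It is substantially shorter than the paper's induction through the truncations $X_n$ and its two approximation lemmas, and it even gives smoothness of $h$ on a neighbourhood of $0$.
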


Apart from moment conditions as in the above two results, that $\phi$ being compactly supported can also lead to the formal derivative being integrable and therefore validity of linear response, as the following result shows:

\begin{thm}\label{infinite differentiability for compactly supported test function}0,1
    Assume $0<\lambda_1 <1 <\lambda_2$ are such that $\lambda_1 \lambda_2 <1$ and $\phi\in C_c^{\infty}(\mathbb{R})$
    (the space of compactly supported smooth function). Then $h:\epsilon \to \int \phi \,d \mu_{\lambda_1 +\epsilon , \lambda_2}$ is 
    infinitely differentiable at $0$ with
    \begin{align*}
        h^{(l)}(0) = \int_{\Sigma} \frac{\,d^l}{\,d \epsilon^l} (\phi\circ X(\epsilon))|_{\epsilon=0}\,d\nu 
        = \int_{\Sigma} \sum_{k_1 + 2k_2 + \cdots lk_l = l} L(k_1, \cdots, k_l) \phi^{(k)} (X) \prod_{j=1}^{l} (X^{(j)})^{k_j}\,d\nu.
    \end{align*}
\end{thm}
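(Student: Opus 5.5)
We need to prove Theorem \ref{infinite differentiability for compactly supported test function}. The plan is to Taylor expand $\phi(X(\epsilon))$ pathwise in $\epsilon$ up to order $l$, then justify exchanging the derivative and the integral by dominated convergence. The domination is not uniform; it is obtained by combining the compact support of $\phi$ with large deviation estimates for the random products $\lambda_{i_1}\cdots\lambda_{i_m}$.

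\textbf{Pathwise structure.} Write $\Lambda_m=\lambda_{i_1}\cdots\lambda_{i_m}$. Then
$$X(\epsilon)=\sum_m d_{i_{m+1}}\Lambda_m(1+\epsilon/\lambda_1)^{o(m)},$$
which is a power series in $\epsilon$ with derivatives $X^{(j)}(\epsilon)$ of the stated form. By the law of large numbers, $o(m)/m\to 1/2$, so $\Lambda_m(1+\epsilon/\lambda_1)^{o(m)}$ decays like $e^{-cm}$ for $|\epsilon|\le\epsilon_0$, where $\epsilon_0$ is chosen with $(\lambda_1+2\epsilon_0)\lambda_2<1$. Cramér's theorem then gives, for each $\eta>0$, a random index $N(\omega)$ such that
$$\Lambda_m(1+\epsilon_0/\lambda_1)^{m}\le C e^{\eta m}\quad\text{for } m\ge N,\qquad \nu(N>n)\le Ce^{-\delta n}.$$
Using the crude bound $(1+\epsilon/\lambda_1)^{o(m)}\le (1+\epsilon_0/\lambda_1)^m$ is wasteful, so I will instead choose $\epsilon_0$ small relative to the LDP rate. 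The outcome is that $\sup_{|\epsilon|\le\epsilon_0}|X^{(j)}(\epsilon)|\le C_j(N)$, with $C_j(N)$ growing at most like $e^{\kappa N}$.

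\textbf{Exponential moments fail, and compact support replaces them.} Exponential growth in $N$ against an exponential tail of $N$ is not automatically integrable. This is exactly where compact support of $\phi$ enters: the Faà di Bruno terms vanish unless $X(\epsilon)\in \operatorname{supp}\phi\subset[-R,R]$. The idea is that large $N$ means large partial products, and a large partial product forces $|X(\epsilon)|$ to be large with high probability.

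To make this precise I will use the renewal decomposition $X(\epsilon)=S_n(\epsilon)+\Lambda_n(\epsilon)X_n(\epsilon)$, where $X_n(\epsilon)=X(\epsilon)\circ\sigma^n$ is independent of the first $n$ symbols. On an event where $\Lambda_n(\epsilon)$ is huge, the condition $|X(\epsilon)|\le R$ forces $X_n(\epsilon)$ into an interval of length $2R/\Lambda_n(\epsilon)$. If the laws of $X_n(\epsilon)$ are Hölder regular, that is,
$$\mu_{\lambda_1+\epsilon,\lambda_2}(I)\le C|I|^{\alpha}$$
uniformly in $\epsilon$, then this probability is at most $C\Lambda_n^{-\alpha}$. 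The factor $\Lambda_n^{-\alpha}$ cancels polynomial powers of $\Lambda_n$; the growth of $X^{(j)}$ is controlled by $\max_m\Lambda_m$ times polynomial factors in $m$.

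The required uniform Hölder (Frostman) regularity is standard for non-degenerate contracting-on-average IFS. Here the maps have distinct fixed points, because the attractor is not a point. I would prove it via the stationarity equation, iterating $\mu(I)\le \tfrac12\mu(f_1^{-1}I)+\tfrac12\mu(f_2^{-1}I)$ and using $\mathbb E[\lambda^{-\alpha}]<1$ for small $\alpha$ together with a small-interval mass estimate. This regularity step is the main obstacle.

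\textbf{Conclusion.} With this bound, the quantity
$$\sup_{|\epsilon|\le\epsilon_0}\bigl|\tfrac{d^l}{d\epsilon^l}\phi(X(\epsilon))\bigr|\;\le\;\|\phi\|_{C^l}\,\mathbf 1_{\{|X(\epsilon)|\le R\ \text{for some}\ \epsilon\}}\;P\bigl(\max_{j\le l}|X^{(j)}|\bigr)$$
is integrable. Precisely: split on the value $n$ of the index where $\max_m \Lambda_m$ is attained, bound its contribution by $e^{-\delta' n}$, and sum over $n$. Dominated convergence, applied inductively in $l$ with the mean value theorem on difference quotients, then gives that $h^{(l)}(0)$ exists and equals $\int \frac{d^l}{d\epsilon^l}\phi(X(\epsilon))|_{\epsilon=0}\,d\nu$. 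Faà di Bruno's formula converts this into the displayed sum, as in Theorem \ref{Differentiability of Stationary measure}.
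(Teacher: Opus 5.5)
There is a genuine gap in the step where compact support is supposed to replace exponential moments. Frostman regularity of the law of the tail only gives $\mathbb{P}(|X|\le R\mid i_1,\dots,i_n)\lesssim (R/\Lambda_n)^{\alpha}$, with $\alpha\le 1$ (indeed $\alpha\le\dim_H\mu$). The Fa\`a di Bruno terms, however, contain $\prod_j (X^{(j)})^{k_j}$, which is of size $(\max_m\Lambda_m)^{k}$ times polynomial factors, with $k$ up to $l$. A factor $\Lambda_n^{-\alpha}$ does not cancel $\Lambda_n^{k}$. The best your bound can give is of order $\mathbb{E}[(\max_m\Lambda_m)^{k-\alpha}]$. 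By the Cram\'er--Lundberg estimate for the maximum of the negatively drifting walk $\log\Lambda_m$ (compare Lemma \ref{Lemma decay of tail}), $\max_m\Lambda_m$ has a tail of order $t^{-s_0}$, where $\psi(s_0)=1$ and $\psi(u)=(\lambda_1^u+\lambda_2^u)/2$. So your dominating function is not shown to be integrable once $k-\alpha\ge s_0$. This happens for every parameter choice as soon as $l\ge s_0+1$. It happens already for $l=1$ when, e.g., $\lambda_1=0.01$, $\lambda_2=10$: there $s_0\approx 0.21$, while $\alpha\le \log 2/(\tfrac12\log 10)\approx 0.6$.

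Splitting by the index where the maximum is attained only decomposes this same divergent quantity, and the claimed $e^{-\delta' n}$ per index is unjustified. Under the tilt by $u=k-\alpha>s_0$, the walk has positive drift and sits at its running maximum with probability bounded below, so index $n$ contributes $\gtrsim\psi(k-\alpha)^n\ge 1$. There is a secondary problem as well: regularity of $\mu_{\lambda_1+\epsilon,\lambda_2}$ at each fixed $\epsilon$ does not control $\{\exists\,\epsilon:|X(\epsilon)|\le R\}$, which is a union over a continuum of moving intervals. At best this mechanism yields a moment-type restriction like that of Theorem \ref{Differentiability of Stationary measure}, not $C^\infty$ for all $\lambda_1\lambda_2<1$.

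The missing idea is positivity. Use the affine conjugacy $c(x)=ax+b$ from the start of Section \ref{Section: Infinite Differentiability for Smooth Compactly Supported Test Function} to reduce to $d_1=d_2=1$; this conjugacy preserves $C_c^\infty$ and depends smoothly on $\lambda_1$. Then every term of $X=\sum_m\Lambda_m$ is positive, so $X\ge\Lambda_m$ for all $m$. Hence $\{X\le M\}\subset\{\Lambda_m\le M\ \forall m\}$, a hard cutoff rather than a factor $(M/\Lambda_n)^{\alpha}$.

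Since $\Lambda_m^s1_{\{\Lambda_m\le M\}}\le M^{s-u}\Lambda_m^u$ for $u\in[0,s]$, you get $\mathbb{E}[\Lambda_m^s1_{\{\Lambda_m\le M\}}]\le M^{s-u}\psi(u)^m$. Moreover $\min_{[0,s]}\psi<1$, because $\psi(0)=1$ and $\psi'(0)=\tfrac12\log(\lambda_1\lambda_2)<0$. Apply H\"older's inequality with weights $\rho^m$ to replace $(\sum_m C(m)\Lambda_m)^s$ by $\sum_m\rho^{-sm}C(m)^s\Lambda_m^s$. This makes every polynomial expression in the $X^{(j)}$ integrable on $\{X\le M\}$, with exponentially small tails. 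That is exactly the Varadhan-type claim in the proof of Lemma \ref{finite approximation}.

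Positivity also fixes the supremum over $\epsilon$. $X(\epsilon)$ and all $X^{(j)}(\epsilon)$ are increasing in $\epsilon$, so you can dominate by $1_{\{X(-\epsilon_0)\le R\}}$ times a polynomial in the $X^{(j)}(\epsilon_0)$. The mismatch factor $\bigl(\tfrac{\lambda_1+\epsilon_0}{\lambda_1-\epsilon_0}\bigr)^{o(m)}$ is then absorbed by taking $\epsilon_0$ small depending on $l$. With this ingredient your dominated-convergence route goes through, and it is somewhat more direct than the paper's, which truncates at $n\asymp\log(1/|\epsilon|)$ and proves Lemmas \ref{finite approximation} and \ref{second step:finite difference converges to derivative}. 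Without positivity, the Frostman-regularity step cannot carry the argument.
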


Note that in the above three theorems, the assumptions remain valid if $\lambda_1,\lambda_2$ are perturbed a little bit. Thus the linear response formula also holds at sufficiently small $\epsilon$. 

Next are the two instances of non-differentiability.

\begin{thm}\label{Section: non-diff example; Main Theorem}
    Assume $0<\lambda_1<1<\lambda_2$ are such that $\lambda_1 \lambda_2 <\frac{1}{4}$. Then there exists a smooth, bounded function $\phi$ such that $h(\epsilon) := \int \phi \,d_{\lambda_1+\epsilon , \lambda_2}$ is not differentiable at $0$.
\end{thm}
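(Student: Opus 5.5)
The plan is to build $\phi$ as a locally finite sum of narrow bumps placed far out on the positive axis. Each bump sits at a spot where $\mu_{\lambda_1,\lambda_2}$ puts mass $\approx 4^{-k}$ into an interval of length $\approx(\lambda_1\lambda_2)^k$. Under the perturbation $\lambda_1\mapsto\lambda_1+\epsilon$ this cluster slides by an amount comparable to $\epsilon$, with a coefficient bounded below. So the $k$-th bump produces a difference quotient of size about $4^{-k}/(\lambda_1\lambda_2)^k=(4\lambda_1\lambda_2)^{-k}$, which tends to $\infty$ exactly because $\lambda_1\lambda_2<\frac14$.

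\textbf{Clusters.} Fix a word $w_k=2^k1^k$ (or a similar word with exactly $k$ twos and $k$ ones, arranged so the cluster is far from the origin). The cylinder $[w_k]$ has $\nu$-measure $4^{-k}$. On it, $X=f_{w_k}(X\circ\sigma^{2k})$, where $f_{w_k}(y)=(\lambda_1\lambda_2)^k y+c_k(\lambda_1)$ and $c_k\approx d_2\lambda_2^k/(\lambda_2-1)$ up to bounded terms. Since $X\circ\sigma^{2k}$ has law $\mu$, a fixed proportion $\ge 1/2$ of this mass lies in an interval $I_k$ of length $2R(\lambda_1\lambda_2)^k$ centred at $c_k$, where $R$ is chosen with $\mu([-R,R])\ge 1/2$. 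This stays true uniformly for $\lambda_1+\epsilon$ with $|\epsilon|$ small.

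We then compute $\partial_{\lambda_1}c_k$. The terms coming from the block $1^k$ carry the factor $\lambda_2^k$ times $\sum_j j\lambda_1^{j-1}$, so $|\partial_{\lambda_1}c_k|\ge D>0$ uniformly in $k$, and in fact it grows. By the separation of the $c_k$ ($c_k\to\infty$ geometrically), we pass to a sparse subsequence $k_n$ so that the intervals $I_{k_n}$, enlarged by a fixed factor, are pairwise disjoint.

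\textbf{Test function.} Set
\[
\phi(x)=\sum_n \psi\!\left(\frac{x-c_{k_n}(\lambda_1)}{\delta_n}\right),\qquad \delta_n=M(\lambda_1\lambda_2)^{k_n},
\]
where $\psi$ is a fixed smooth bump. This $\phi$ is smooth and bounded because the supports are disjoint and locally finite. Take $\epsilon_n=\delta_n/|\partial c_{k_n}|$. At this perturbation the $n$-th cluster moves completely off (or onto a steep flank of) the $n$-th bump. The contribution of $[w_{k_n}]$ to $h(\epsilon_n)-h(0)$ is then $\gtrsim 4^{-k_n}$, so its contribution to the difference quotient is $\gtrsim 4^{-k_n}|\partial c_{k_n}|/\delta_n\to\infty$. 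As a cross-check, the formal derivative $\int\phi'(X)X^{(1)}\,d\nu$ diverges for the same reason.

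\textbf{Main obstacle: controlling everything else.} The difficulty is to show that the rest of $h(\epsilon_n)-h(0)$ does not cancel this divergence. The rest consists of mass of $\mu$ entering the $n$-th bump from other cylinders, and the effect of the other bumps.

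1. \emph{Other bumps, $m<n$.} These are handled by the smoothness of finitely many fixed bumps. By Theorem \ref{infinite differentiability for compactly supported test function}, $\sum_{m<n}$ gives a $C^1$ function of $\epsilon$, so its difference quotient is bounded by some $B_{n-1}$. We choose $k_n$ so large that $(4\lambda_1\lambda_2)^{-k_n}\gg B_{n-1}$.

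2. \emph{Other bumps, $m>n$.} Their contributions are bounded by total mass near far-away points. Using the polynomial tail $\mu(|x|>T)\lesssim T^{-t}$ with $t>0$ from $\frac{\lambda_1^t+\lambda_2^t}{2}<1$, and the rapid growth of $c_{k_m}$, this is small compared with $\epsilon_n\cdot(4\lambda_1\lambda_2)^{-k_n}$ once the sequence is sparse enough.

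3. \emph{Other cylinders inside the $n$-th bump.} This is the most delicate part. My first attempt is to decompose $\nu$ according to the prefix of length $2k_n$ and bound the $\mu$-mass of the window of width $\delta_n$ around $c_{k_n}$ carried by other prefixes. If that is not enough, the fallback is to use an odd bump $\psi$ and the monotone shift of the $[w_{k_n}]$-cluster, so that its contribution has a definite sign. Alternatively, we can pick the sign $\pm\epsilon_n$ adaptively to exhibit two different one-sided limits.
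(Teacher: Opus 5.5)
\textbf{The gap is your item 3, and the sign problem behind it.} Because $\psi$ is a bump, $\phi$ is not monotone. During the displacement $\epsilon_n$, the $[w_{k_n}]$-cluster leaves the $n$-th bump, which contributes something of order $-4^{-k_n}$ to $h(\epsilon_n)-h(0)$. At the same time, $\mu$-mass from other cylinders can enter the bump with the opposite sign, and nothing in the proposal rules out cancellation. To rule it out you would need to show that the mass carried by prefixes other than $w_{k_n}$, in a neighbourhood of $c_{k_n}$ of size comparable to $\delta_n$ (plus that mass's own displacement), is $o(4^{-k_n})$. That is a local regularity estimate for a singular self-similar measure with overlaps. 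You do not prove it, and nothing in the paper provides it.

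Your fallbacks do not supply it either. An odd $\psi$ fixes the sign of the cluster's own contribution only. Reversing the sign of $\epsilon_n$ reverses every motion at once, so a competing contribution reverses with it. Your ``cross-check'' is inconclusive for the same reason: $\phi'$ changes sign, so divergence of $\int\phi'(X)X^{(1)}\,d\nu$ faces the same cancellation question.

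\textbf{The missing idea is monotonicity, which is how the paper avoids all of this.} With $d_1=d_2=1$, as in the paper, every term of $X(\epsilon)=\sum_m\lambda_{i_1}(\epsilon)\cdots\lambda_{i_m}(\epsilon)$ is nondecreasing in $\epsilon$. So if each piece of $\phi$ is nondecreasing, every cylinder and every piece contributes nonnegatively to $(h(\epsilon)-h(0))/\epsilon$. The paper's argument runs as follows.
\begin{enumerate}
\item It takes $\phi=\sum_N\phi_N$ with $0\le\phi_N'\le1$ and $\phi_N'\equiv1$ on the $r\rho^N$-neighbourhood of the set $A_N$ of values of $X_N$ that are at least $M(N)$.
\item Each $h_N$ then has nonnegative difference quotients, so Fatou's lemma over $N$ gives $\liminf_{\epsilon\to0}(h(\epsilon)-h(0))/\epsilon\ge\sum_Nh_N'(0)$.
\item The bound $X^{(1)}\ge cX$ and the choice $M(N)\,\mathbb{P}\{X_N\ge M(N)\}\ge N^{-1/2}$ make this sum diverge.
\item The hypothesis $\lambda_1\lambda_2<\frac14$ enters only through $\rho<\frac12$. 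That gives $\sum_N\|\phi_N'\|_{L^1}\le\sum_N8r(2\rho)^N<\infty$, i.e.\ boundedness of $\phi$.
\end{enumerate}

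\textbf{Your cluster construction can be salvaged the same way.} Replace the unit bumps by smooth nondecreasing steps of height $a_n$ and width $\delta_n$, placed just to the right of the bulk of the $[w_{k_n}]$-cluster, with $\sum_na_n<\infty$. The cluster alone then bounds the difference quotient at $\epsilon_n\asymp\lambda_1^{k_n}$ from below by a constant times $a_n4^{-k_n}/\epsilon_n\asymp a_n(4\lambda_1)^{-k_n}$. This diverges, for example with $a_n=n^{-2}$ and $k_n=n$, because $4\lambda_1<4\lambda_1\lambda_2<1$. With this change your items 1--3 become unnecessary.
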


\begin{thm}\label{Section: Non-diff Examples; Main Theorem 2}
    Assume $0<\lambda_1<1<\lambda_2$ are such that $\lambda_1\lambda_2<1$ and  $\log_2\lambda_2 >1+ \log_{\frac{1}{\lambda_1}} \lambda_2$. Then there exists a smooth, bounded function $\phi$ such that $h(\epsilon) := \int \phi \,d_{\lambda_1+\epsilon , \lambda_2}$ is not differentiable at $0$. 
    \end{thm}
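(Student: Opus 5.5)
The plan is to find, along a sequence $n\to\infty$, clusters of $\mu_{\lambda_1,\lambda_2}$-mass that are narrow, carry relatively large mass, and move fast when $\lambda_1$ is perturbed. We then place bumps of a fixed profile on them, so that the difference quotients of $h$ blow up. Let $[2^n1^k]$ denote the cylinder of words beginning with $n$ twos followed by $k$ ones. For $i\in[2^n1^k]$ we have
$$X(\epsilon)=d_2\,\frac{\lambda_2^n-1}{\lambda_2-1}+\lambda_2^n\Big(d_1\sum_{j=0}^{k-1}(\lambda_1+\epsilon)^j+(\lambda_1+\epsilon)^kX'(\epsilon)\Big),$$
where $X'$ is the projection of $\sigma^{n+k}i$ and is independent of the prefix.

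Fix $M$ large so that $\nu(|X'(\epsilon)|\le M)\ge 1/2$ uniformly for small $\epsilon$; tightness of the stationary measures near $(\lambda_1,\lambda_2)$ gives this. Then half of the cylinder, of mass at least $2^{-(n+k)-1}$, lies in an interval $I_n(\epsilon)$. This interval has width about $2M\lambda_2^n\lambda_1^k$ and is centered at $c_n(\epsilon)=d_2S_n+\lambda_2^nd_1T_k(\lambda_1+\epsilon)$. The center moves at speed $c_n'(0)\asymp\lambda_2^n$, since $T_k'$ is bounded below by $d_1$-independent constants, assuming $d_1\neq0$; the case $d_1=0$ is handled by putting the ones later. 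We choose $k=k(n)$ so that the width is $\asymp1$, that is, $\lambda_1^{k}\lambda_2^{n}\asymp 1$, or $k\approx n\log\lambda_2/\log(1/\lambda_1)$. Shifting by $\epsilon_n\asymp\lambda_2^{-n}$ then translates the whole cluster by a distance larger than its width.

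With a fixed bump $\psi\in C_c^\infty$ placed at $c_n(0)$, the contribution of this cluster to $h(\epsilon_n)-h(0)$ is at least $c\,2^{-n-k}$. Its contribution to the difference quotient is therefore at least
$$c\,2^{-n-k}\lambda_2^{n}=c\exp\Big(n\log 2\Big[\log_2\lambda_2-1-\log_{1/\lambda_1}\lambda_2\Big]\Big)\to\infty,$$
exactly under the hypothesis of Theorem \ref{Section: Non-diff Examples; Main Theorem 2}. Since $c_n(0)\sim C\lambda_2^n$ with $\lambda_2>1$, the centers are exponentially separated. We therefore take $\phi=\sum_{n\in N}\psi(x-c_n(0))$ over a sparse set $N$. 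This $\phi$ is $C^\infty$ and bounded by $\|\psi\|_\infty$ because the bumps are disjoint, and all its derivatives are bounded as well.

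The main obstacle is controlling all the other mass. Mass near $c_n(0)$ that is not in the chosen cylinder also moves, possibly into or out of the bump, and other bumps $\psi(\cdot-c_m)$ change too. To handle this we first pass to a sparse subsequence and use the tail estimate $\mu(|x|>R)\lesssim R^{-\kappa}$ to bound the effect of far-away bumps. Second, we note that the fraction of the mass inside the bump window that comes from the cylinder $[2^n1^k]$ can be arranged to dominate. If direct comparison fails, the fallback is a sign/averaging argument: choose the bump profile ($\psi$ versus its translates, or odd versus even) so that the cluster's contribution is a definite quantity, while the total remaining contribution, bounded by $\|\psi\|_\infty\cdot\mu(\text{window})$ plus a Lipschitz term for the slowly moving mass, is $o(2^{-n-k}\lambda_2^n)$, or at least cannot cancel along two different choices $\epsilon_n$ and $2\epsilon_n$. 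Either way the difference quotients $(h(\epsilon_n)-h(0))/\epsilon_n$ are unbounded, so $h$ is not differentiable at $0$.
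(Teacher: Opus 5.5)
Your scaling heuristic is right and is the same mechanism the paper uses. The cylinder $[2^n1^k]$ with $\lambda_1^k\lambda_2^n\lesssim1$ has mass $2^{-n-k}$, moves at speed $\asymp\lambda_2^n$, and $2^{-n-k}\lambda_2^n\to\infty$ is exactly the hypothesis. But the step you call ``the main obstacle'' is a genuine gap, and none of your proposed remedies closes it.

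Because $\psi$ is a bump, mass leaving the window and mass entering it contribute with opposite signs. A lower bound on $|h(\epsilon_n)-h(0)|$ therefore needs an \emph{upper} bound of size $o(2^{-n-k})$ on the net effect of all other mass. The bound $\|\psi\|_\infty\,\mu(\text{window})$ cannot give this, since the window contains the cluster itself. Nor is there any free reason for the cylinder to dominate the window's mass. For example, words in $[2^{n-1}1]$ land within $O(1)$ of $c_n(0)$ with probability $2^{-n}\mu(B(z_0,C\lambda_2^{-n}))$ for a suitable $z_0$. They move at a different speed from the cluster, so they can enter the window from the left. Showing this is $o(2^{-n-k})$ would require a lower bound on the local dimension of $\mu$ at $z_0$ that you do not have. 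The ``$\epsilon_n$ versus $2\epsilon_n$'' or ``odd versus even profile'' fallback is not an argument.

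The missing idea is positivity, which lets the paper avoid estimating the other mass at all. After the affine conjugation to $d_1=d_2=1$ (which also removes your worry about $d_1=0$), $X(\epsilon)$ is pointwise nondecreasing in $\epsilon$ and $X^{(1)}>0$.

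The paper chooses $\kappa$ slightly larger than $\log_{1/\lambda_1}\lambda_2$, so that $\rho=\lambda_2\lambda_1^\kappa<1$ while still $\lambda_2>2^{1+\kappa}$. It then uses nondecreasing smoothed steps $\phi_N$ with $0\le\phi_N'\le1$, $\phi_N'\equiv1$ on $B(x_N,r\rho^N)$ and $\phi_N'=0$ off $B(x_N,2r\rho^N)$. Here $x_N$ is the partial sum along $2^N1^{\lceil\kappa N\rceil}$. The sum $\phi=\sum_N\phi_N$ is smooth and bounded because $\sum_N\|\phi_N'\|_{L^1}\le\sum_N4r\rho^N<\infty$.

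Each quotient $(h_N(\epsilon)-h_N(0))/\epsilon$ is nonnegative, so Fatou's lemma over $N$ gives $\liminf_{\epsilon\to0}(h(\epsilon)-h(0))/\epsilon\ge\sum_Nh_N'(0)$. By the linear response formula for $\phi$ with compactly supported derivative, $h_N'(0)=\int\phi_N'(X)X^{(1)}\,d\nu$, whose integrand is nonnegative. One may therefore discard everything except the cylinder and use $X^{(1)}\ge cX$ to get $h_N'(0)\ge\frac{c}{8}(2^{-1-\kappa}\lambda_2)^N\to\infty$.

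In your own finite-difference language, the repair is to replace bumps by nondecreasing steps. Then every piece of mass contributes nonnegatively to $h(\epsilon_n)-h(0)$, so the cluster term is a true lower bound. The step heights must be summable for $\phi$ to stay bounded: width-$\asymp1$ steps of height $\asymp1$ would not work. This is what the paper's shrinking windows of width $\asymp\rho^N$ with slope at most $1$ achieve.
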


Note that the assumption of Theorem \ref{Section: Non-diff Examples; Main Theorem 2} 
is satisfied if $\lambda_2$ is large, like $\lambda_2\ge 4$.    

\begin{remark}
   The above give rise to examples of  (partially) hyperbolic solenoidal attractor where linear response fails for smooth observables. To see this, consider the skew product representation of the IFS, which is the map $F:[-1,1]\times \mathbb{R}\to [-1,1]\times \mathbb{R}$ ,
\begin{equation}
\nonumber F(x,y) = 
    \begin{cases}
    (2x+1,f_1(y)), \text{ if } x\in[-1,0]\\
    (2x-1,f_2(y)),\text{ if } x\in (0,1]
\end{cases}
.
\end{equation}
Then $\rho = \rho_{\lambda_1,\lambda_2} :=Leb \times \mu_{\lambda_1,\lambda_2}$ is the unique SRB measure of $F$. And $\Phi(x,y):= \phi(y)$ is a $C^\infty$ observable that fails linear response.

\end{remark}

\medskip

 The proof of the three affirmative results follow the same strategy. We will show that under the assumption of these theorems, $h(\epsilon)$ is exponentially close to its finite approximation $\int_\Sigma\phi(X_n)\,d\nu$, where  
$$X_n := \sum_{m=0}^n  \lambda_{i_1}\cdots\lambda_{i_m}  d_{i_{m+1}} .$$ Then we show that the derivative can be computed using finite approximations. As for the two non-differentiable examples, the construction takes advantage of the fact that the partial sum $\DS X_N = \sum_{m=0}^{N} \lambda_{i_1}\cdots\lambda_{i_m} { d_{i_{m+1}}}$ can be large while the scaling factor 
$\Lambda_N = \lambda_{i_1}\cdots \lambda_{i_m}$ is tiny, and that the probability of this happening is not too small because of the self-similar structure of $X$. These will be explained in more detail in individual sections below.\\

Here is how the paper is organized. Theorem \ref{infinite differentiability for compactly supported test function} is proved in \S \ref{Section: Infinite Differentiability for Smooth Compactly Supported Test Function}. The two examples on non-differentiability are constructed in \S \ref{Section: Examples of Non-Differentiability}, where part of the construction relies on Theorem \ref{infinite differentiability for compactly supported test function}. Then we prove Theorem \ref{Differentiability of Stationary measure}  for $r=1$ and Theorem \ref{Smoothness for Finite Moment Condition} in \S 
\ref{Section: Differentiability via Moment Condition and Bound on Derivative }
and \S \ref{Section: Smoothness of Moments}. 
The higher derivatives in Theorem \ref{Differentiability of Stationary measure}
are computed in \S \ref{ScHD}.
 \S \ref{Section: Extensions and open problems} contains discussion 
of extensions and open problems.

\input{Infinite_Differentiability_for_SCSO}
\input{Examples_of_Non-Differentiability}
\input{Differentiability_via_Moment_and_Derivative}

\input{Smoothness_of_Moments}

\input{Extensions_and_open_problems}

\printbibliography[title = {\centering{References}}]

\end{document}

%% file: Infinite_Differentiability_for_SCSO.tex
\section{Infinite Differentiability for Smooth, Compactly Supported Test Function}\label{Section: Infinite Differentiability for Smooth Compactly Supported Test Function}

\subsection{Outline of the Proof}
In this section  we prove Theorem \ref{infinite differentiability for compactly supported test function}
which establish the differentiability of \eqref{SMAv}
when the test function $\phi$ is smooth and compactly supported. 



Recall that $\mu_{\lambda_1+\epsilon,\lambda_2}$ is the stationary measure for the IFS 
$$\Phi(\epsilon) = \left\{f_1(x) = (\lambda_1 +\epsilon)x+d_1, \quad f_2(x) = \lambda_2x+d_2;\quad p = \left(\frac{1}{2},\frac{1}{2}\right)\right \}.$$
Before starting the proof, we note that we may assume $d_1\!=\!d_2\!=\!1$ without loss of generality. Indeed,  define $f_1(x) = \lambda_1 x+1,f_2(x)=\lambda_2x +1$ and let $\tilde{f}_i(x) = \lambda_i x+d_i,i=1,2$ be any other nontrivial IFS (recall that an IFS is nontrivial if the functions in question do not have a common fixed point). That is, $\frac{d_1}{1-\lambda_1}\neq \frac{d_2}{1-\lambda_2}$. Then a simple calculation yields that the function $c:\mathbb{R}\to \mathbb{R},x\mapsto ax+b$ 
with
$$ a:=\frac{(1-\lambda_1)d_2 -(1-\lambda_2)d_1}{\lambda_2-\lambda_1},\quad 
b:=\frac{d_2-d_1}{\lambda_1-\lambda_2}$$
satisfies $c\circ f_i = \tilde{f}_i\circ c,i=1,2$. If we write $\tilde{\Pi}:\Sigma\to \mathbb{R}$ for the symbolic projection of the IFS $\{\tilde{f}_1,\tilde{f}_2\}$, then for 
almost every
$i=(i_1,i_2\cdots)\in \Sigma$,
\begin{align*}
    \tilde{\Pi}(i) &= \lim_{n\to \infty} \tilde{f}_{i_1} \circ\cdots\circ \tilde{f}_{i_n}(0) =\lim_{n\to \infty} \tilde{f}_{i_1} \circ\cdots\circ \tilde{f}_{i_n}(c(0)) \\
    &= \lim_{n\to \infty} c\circ f_{i_1} \circ \cdots \circ f_{i_n}(0) 
    = c(\lim_{n\to \infty}  f_{i_1} \circ \cdots \circ f_{i_n}(0) )
    =c\circ \Pi (i),
\end{align*}
where $\Pi:\Sigma\to \mathbb{R}$ is the symbolic projection of $\{f_1,f_2\}$. Consequently, if we write $\tilde{\mu} = \tilde{\mu}_{\lambda_1,\lambda_2}$ for the stationary measure of $\{(\tilde{f}_1,\tilde{f}_2),p=(\frac{1}{2},\frac{1}{2})\}$ and $\mu = \mu_{\lambda_1,\lambda_2}$ for that of $\{(f_1,f_2),p=(\frac{1}{2},\frac{1}{2})\}$, then for any test function  $\phi\in C_c^{\infty}(\mathbb{R})$, 
$$\int_{\mathbb{R}} \phi(x)\,d\tilde{\mu}(x) = \int_{\Sigma} \phi(\tilde{\Pi}(i))\,d\nu = \int_{\Sigma} \phi\circ c(\Pi(i))\,d\nu = \int_{\mathbb{R}} \phi\circ c (x)\,d\mu(x) = \int_{\mathbb{R}}\phi(ax+b)\,d\mu(x).$$
That is, the integral of $\tilde{\mu}$ against a test function $\phi$ is the same as the integral of $\mu$ 
against the test function $\phi\circ c$. 

Clearly if $\phi\in C^\infty_{c}(\mathbb{R})$, then so is $\phi\circ c$. Moreover 
$a,b$ are smooth functions in $\lambda_1,\lambda_2$. It is thus not difficult to see that, if the conclusion of Theorem \ref{infinite differentiability for compactly supported test function} is true for $\{(f_1,f_2),p=(\frac{1}{2},\frac{1}{2})\}$, then for  $\{(\tilde{f}_1,\tilde{f}_2),p=(\frac{1}{2},\frac{1}{2})\}$,

\begin{align*}
    \frac{d}{d\lambda_1} \int_\mathbb{R} \phi(x)\,d\tilde{\mu}_{\lambda_1,\lambda_2} & = \frac{d}{d\lambda_1}\int_{\mathbb{R}} \phi(ax+b)\,d\mu_{\lambda_1,\lambda_2} \\
    &= \int_{\mathbb{R}} \phi'(ax+b) (\frac{\partial a}{\partial\lambda_1}+\frac{\partial b}{\partial\lambda_1})\,d\mu_{\lambda_1,\lambda_2}(x) + \frac{d}{d\lambda_1} \int_{\mathbb{R}} \phi\circ c\,d\mu_{\lambda_1,\lambda_2}. 
\end{align*}

Indeed, the second equality is due to chain rule and that interchanging derivative and integral is valid, as $\phi'(ax+b) (\frac{\partial a}{\partial\lambda_1}+\frac{\partial b}{\partial\lambda_1})$ is uniformly bounded for $\lambda_1,\lambda_2$ in a small neighborhood. Moreover, the term $\DS  \frac{d}{d\lambda_1} \int_{\mathbb{R}} \phi\circ c\,d\mu_{\lambda_1,\lambda_2}$ is well-defined because we assume Theorem \ref{infinite differentiability for compactly supported test function} holds with $d_1=d_2=1$.

With the above discussion, let us assume $d_1=d_2=1$ till the end of this section. Then $\mu_{\lambda_1+\epsilon , \lambda_2}$ is the law of
$$X(\epsilon) =  \sum_{m=0}^{\infty} \lambda_{i_1 }(\epsilon)\cdots \lambda_{i_m}(\epsilon).$$
where $\lambda_1(\epsilon):= \lambda_1 +\epsilon , \lambda_2(\epsilon): = \lambda_2$.

The proof will be induction on $l$ and the inductive step consists of two parts: first, we approximate the actual integral by finite truncate, and then we estimate the difference of finite approximation to yield the derivative. 

More precisely, the base case $l=0$ is trivial. Assume that the theorem is true for some $l\ge 0$. 
The following two lemmas to carry out the inductive step.

    Denote by $X_n$ and $X_n^{(j)}$'s the finite truncates of $X,X^{(j)}$'s at $n$. That is,
    $$X_n =  \sum_{m=1}^n \lambda_{i_1 }\cdots \lambda_{i_m},\quad 
    X_n^{(j)} = \frac{j!}{\lambda_1^j}\sum_{m=1}^{n} \lambda_{i_1} \cdots \lambda_{i_m} \binom{o(m)}{j} .$$

\begin{lm}\label{finite approximation}
    There exists $c>0 $ and $0<\theta<1$ that depends on $\lambda_1,\lambda_2$, $\phi$ and $l$ such that 
    \begin{align*}
        |&\int \sum_{k_1 + 2k_2 + \cdots lk_l = l} L(k_1, \cdots, k_l) \phi^{(k)} (X) \prod_{j=1}^{l} (X^{(j)})^{k_j} \\
        &-\int \sum_{k_1 + 2k_2 + \cdots lk_l = l} L(k_1, \cdots, k_l) \phi^{(k)} (X_n) \prod_{j=1}^{l} (X^{(j)}_n)^{k_j}| < c\theta ^n.
    \end{align*}
\end{lm}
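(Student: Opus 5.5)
The plan is to write $X=X_n+\Lambda_n\,X\circ\sigma^n$ with $\Lambda_m:=\lambda_{i_1}\cdots\lambda_{i_m}$, and to prove that every quantity in the integrand, once restricted to the region where $\phi^{(k)}$ is nonzero, has all $L^p$ norms finite and tail $L^p$ norms that decay exponentially. With $d_1=d_2=1$ every summand $\Lambda_m$ is positive, so $0<X_n\le X$. Choose $R>0$ with $\operatorname{supp}\phi\subset(-\infty,R]$.

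\emph{Step 1 (large deviations).} The exponent $\log\Lambda_m$ is a sum of i.i.d.\ variables with mean $\tfrac12(\log\lambda_1+\log\lambda_2)<0$ and bounded increments. Cramér's theorem therefore gives $\delta,c>0$ with $\nu(\Lambda_m>e^{-\delta m})\le e^{-cm}$. Consequently, for each $p\ge1$,
$$\mathbb{E}\big[\min(\Lambda_m,R)^p\big]\le e^{-p\delta m}+R^pe^{-cm}\le C_p\theta_p^{m}.$$

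\emph{Step 2 (truncated moments).} On $\{X\le R\}$ (and likewise on $\{X_n\le R\}$ for the indices $m\le n$) each $\Lambda_m\le R$ because all summands are positive, so there $\Lambda_m=\min(\Lambda_m,R)$. Since $\binom{o(m)}{j}\le m^j$, Minkowski's inequality gives
$$\big\|\mathbf 1_{\{X\le R\}}X^{(j)}\big\|_p\le \frac{j!}{\lambda_1^j}\sum_m m^j\,\big\|\min(\Lambda_m,R)\big\|_p<\infty .$$
The same bound holds for $\mathbf 1_{\{X_n\le R\}}X_n^{(j)}$, uniformly in $n$. For the tail $T_n^{(j)}:=X^{(j)}-X_n^{(j)}$, which involves indices $m>n$ only, the same computation gives $\|\mathbf 1_{\{X\le R\}}T_n^{(j)}\|_p\le C\theta^n$.

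\emph{Step 3 (the difference).} Fix a term $(k_1,\dots,k_l)$ with $k\ge1$; note $k\ge1$ whenever $l\ge1$, and the case $l=0$ is trivial. Telescope
$$\phi^{(k)}(X)\prod_j (X^{(j)})^{k_j}-\phi^{(k)}(X_n)\prod_j(X_n^{(j)})^{k_j}$$
into two kinds of pieces.

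The first piece is $\big(\phi^{(k)}(X)-\phi^{(k)}(X_n)\big)\prod_j(X_n^{(j)})^{k_j}$. It vanishes unless $X_n\le R$, since $X\ge X_n$. Moreover
$$|\phi^{(k)}(X)-\phi^{(k)}(X_n)|\le C\min(X-X_n,1)\le C\sum_{m>n}\min(\Lambda_m,1),$$
whose $L^2$ norm is $O(\theta^n)$ by Step 1.

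The remaining pieces are of the form $\phi^{(k)}(X)\cdot(\text{products of }X^{(j)},X_n^{(j)})\cdot T_n^{(j)}$. Each carries the factor $\mathbf 1_{\{X\le R\}}$, and also $X_n\le X\le R$ there. Their $L^1$ norms are then bounded via Hölder's inequality by a product of finitely many $L^p$ norms from Step 2, one of which is a tail norm of size $O(\theta^n)$.

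Summing over the finitely many tuples $(k_1,\dots,k_l)$ gives the bound $c\theta^n$.

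The main obstacle is Step 2. The variables $X^{(j)}$ need not be integrable under the sole assumption $\lambda_1\lambda_2<1$, and the restriction $X\le R$ only gives $\Lambda_m\le R$, not decay. The trick is to control the truncated quantity $\min(\Lambda_m,R)$ through the large-deviation estimate of Step 1, rather than trying to bound $\Lambda_m$ pointwise.
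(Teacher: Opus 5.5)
Your proof is correct. Its core mechanism is the same as the paper's. Positivity gives $\Lambda_m\le X\le R$ on $\{X\le R\}$, and a large-deviation bound on $\log\Lambda_m$ then makes the truncated variable exponentially small. The paper's claim $\mathbb{E}[\theta_{i_1}\cdots\theta_{i_m}\mathbf{1}_{\{X\le M\}}]\le Ce^{-\gamma m}$ is the same estimate as your Step 1.

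The organization differs in two ways.

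\emph{The product difference (the paper's $J_1$).} The paper expands binomially in the tails and bounds the head factors pointwise by $Kn^l$ on $\{X\le M\}$. It handles $s$-th powers of tail sums through H\"older's inequality with geometric weights $\rho^m$, absorbing the polynomial losses into $\theta$. You instead get uniform $L^p$ bounds for every truncated factor at once by Minkowski, and close with generalized H\"older.

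\emph{The term $J_2=\int(\phi^{(k)}(X)-\phi^{(k)}(X_n))\prod_j(X_n^{(j)})^{k_j}$.} This is the more substantial difference. The paper writes $X=X_n+\Lambda_nY$ and splits over $\{Y\ge R\}$ and $\{\Lambda_n\le r^n\}$. It then invokes the polynomial tail bound of Lemma~\ref{Lemma decay of tail}, quoted from Guivarc'h--Le Page, and optimizes $R=\beta^{-n/s}$. Your inequality $|\phi^{(k)}(X)-\phi^{(k)}(X_n)|\le C\sum_{m>n}\min(\Lambda_m,1)$, combined with Cauchy--Schwarz, needs nothing beyond Step 1.

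Your route is therefore self-contained and shorter. It also makes evident that $c,\theta$ can be chosen uniformly for $\lambda_1$ in a small neighbourhood, which is what the proof of Theorem~\ref{infinite differentiability for compactly supported test function} actually uses. The paper's route gets by with cruder pointwise control of the head factors, at the price of an external heavy-tail theorem and more bookkeeping.
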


\begin{lm}\label{second step:finite difference converges to derivative}
    For any $\epsilon\neq0$, let $n = n(\epsilon)\in\mathbb{N} $ be such that there exists some constant d satisfying $\frac{1}{d} \log\frac{1}{|\epsilon|} \le n \le d \log\frac{1}{|\epsilon|},\forall \epsilon\neq0$. Then the following limit holds:
    \begin{align}
        \lim_{\epsilon\to 0} \frac{1}{\epsilon} (&\int \sum_{k_1 + 2k_2 + \cdots lk_l = l} L(k_1, \cdots, k_l) \phi^{(k)} (X_n(\epsilon)) \prod_{j=1}^{l} (X^{(j)}_n (\epsilon))^{k_j}\\
        &-\int \sum_{k_1 + 2k_2 + \cdots lk_l = l} L(k_1, \cdots, k_l) \phi^{(k)} (X_n) \prod_{j=1}^{l} (X^{(j)}_n)^{k_j}) \\
        &=\int \sum_{k_1 + 2k_2 + \cdots (l+1)k_l = l+1} L(k_1, \cdots, k_{l+1}) \phi^{(k)} (X) \prod_{j=1}^{l+1} (X^{(j)})^{k_j}.
    \end{align}
    Here $X_n (\epsilon):= \DS \sum_{m=1}^n \lambda_{i_1}(\epsilon) \cdots \lambda_{i_m}(\epsilon)$ and 
    $\DS X_n^{(j)} (\epsilon) := \frac{\,d^j}{\,d \epsilon^j}X_n (\epsilon) =  \frac{j!}{\lambda_1^j}\sum_{m=1}^{n} \lambda_{i_1}(\epsilon) \cdots \lambda_{i_m}(\epsilon) \binom{o(m)}{j}.$ 
 \end{lm}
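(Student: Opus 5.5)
The plan is to reduce the difference quotient to a single $(l+1)$-st derivative via the mean value theorem, and then pass to the limit with a uniform integrability argument. Write
$$G_n(\epsilon):=\sum_{k_1+\cdots+lk_l=l}L(k_1,\dots,k_l)\,\phi^{(k)}(X_n(\epsilon))\prod_{j=1}^l (X_n^{(j)}(\epsilon))^{k_j}=\frac{d^l}{d\epsilon^l}\phi(X_n(\epsilon)).$$
This identity is Faa di Bruno applied to the polynomial $\epsilon\mapsto X_n(\epsilon)$. For fixed $n$ and a fixed word, $G_n$ is smooth in $\epsilon$. By the mean value theorem,
$$\frac{G_n(\epsilon)-G_n(0)}{\epsilon}=G_n'(\xi)=\sum_{k_1+\cdots+(l+1)k_{l+1}=l+1}L(k_1,\dots,k_{l+1})\,\phi^{(k)}(X_n(\xi))\prod_{j=1}^{l+1}(X_n^{(j)}(\xi))^{k_j}$$
for some $\xi=\xi(\epsilon,i)$ between $0$ and $\epsilon$. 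It therefore suffices to show that the integral of the right-hand side converges to the integral of the same expression evaluated at $X,X^{(j)}$. The word-dependence of $\xi$ is harmless, because all the bounds below are uniform in $|\xi|\le|\epsilon|$.

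\textbf{Comparison of $\xi$-quantities with $\epsilon=0$ quantities.} Since $d_1=d_2=1$, we have $\lambda_{i_1}(\xi)\cdots\lambda_{i_m}(\xi)=\Lambda_m(1+\xi/\lambda_1)^{o(m)}$ with $\Lambda_m:=\lambda_{i_1}\cdots\lambda_{i_m}$ and $o(m)\le m\le n$. Moreover $n|\epsilon|\le d|\epsilon|\log(1/|\epsilon|)\to0$. Hence, uniformly over words and over $|\xi|\le|\epsilon|$,
$$e^{-c_\epsilon}X_n\le X_n(\xi)\le e^{c_\epsilon}X_n,\qquad 0\le X_n^{(j)}(\xi)\le e^{c_\epsilon}X_n^{(j)},\qquad c_\epsilon:=2n|\epsilon|/\lambda_1\to0.$$
This gives pointwise a.s. convergence. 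Indeed, $X_n\uparrow X<\infty$ a.s., and $X_n^{(j)}\uparrow X^{(j)}$, which is finite a.s. because $\Lambda_m$ decays exponentially a.s. by the strong law. Together with the ratios tending to $1$, this yields $X_n(\xi)\to X$ and $X_n^{(j)}(\xi)\to X^{(j)}$ a.s. By continuity of $\phi^{(k)}$, the integrand converges a.s. to the formal $(l+1)$-st derivative.

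\textbf{Uniform integrability (main obstacle).} Pick $R$ with $\operatorname{supp}\phi\subset[-R,R]$. The integrand vanishes unless $X_n(\xi)\le R$, which forces $X_n\le 2R$ for small $\epsilon$. On that event every partial product satisfies $\Lambda_m\le 2R$ for $m\le n$. The integrand is then bounded by
$$C\,\mathbf 1_{\{\Lambda_m\le 2R,\ m\le n\}}\prod_j\Big(\sum_{m\le n}\Lambda_m m^j\Big)^{k_j}.$$
Note that I deliberately avoid $X^{(j)}$ itself here, since its moments may be infinite because of heavy tails. I then aim to bound the $L^p$ norm of $Y_{n,j}:=\mathbf 1\cdot\sum_{m\le n}\Lambda_m m^j$ uniformly in $n$, for some $p>1$. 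To do this, fix $0<\delta<-\frac12(\log\lambda_1+\log\lambda_2)$ and split each term according to whether $\Lambda_m\le e^{-\delta m}$ or not. This gives
$$Y_{n,j}\le\sum_m e^{-\delta m}m^j+2R\sum_m m^j\mathbf 1_{\{\log\Lambda_m>-\delta m\}}.$$
By Cramér's large deviation bound, $\nu(\log\Lambda_m>-\delta m)\le e^{-\eta m}$ for some $\eta>0$. Minkowski's inequality then gives $\|Y_{n,j}\|_{L^q}\le C_q$ for every $q<\infty$, uniformly in $n$. Hölder's inequality over the finitely many factors yields a uniform $L^p$ bound, $p>1$, on the dominating function.

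\textbf{Conclusion.} With the a.s. convergence and the uniform integrability in hand, Vitali's convergence theorem gives the claimed limit. This also shows that the limiting integrand is integrable, consistent with Lemma \ref{finite approximation}.
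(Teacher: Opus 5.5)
Your argument is correct, but it is organized differently from the paper's.

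\textbf{The paper's route.} The paper splits the difference into two pieces.
\begin{itemize}
\item $G_1$ is the change of $\phi^{(k)}(X_n(\cdot))$, handled by a mean value step in the argument of $\phi^{(k)}$ only.
\item $G_2$ is the change of $\prod_j (X_n^{(j)}(\cdot))^{k_j}$, handled by a binomial expansion.
\end{itemize}
It then writes $X_n(\epsilon)-X_n$ and $X_n^{(j)}(\epsilon)-X_n^{(j)}$ as explicit polynomials in $\epsilon$. It shows that the coefficients of $\epsilon^t$, $t\ge 2$, contribute nothing because $n=O(\log\frac{1}{|\epsilon|})$. It reassembles the first-order terms into the order-$(l+1)$ Fa\`a di Bruno sum using $\binom{o}{j}(o-j)=(j+1)\binom{o}{j+1}$. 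Limits are passed with Lemma~\ref{Generalized Dominated Convergence Lemma}, dominating by $\prod_j (X_n^{(j)})^{k_j}1_{\{X_n\le 2M\}}$. This increases in $n$ to a limit that is integrable by the weighted H\"older/large-deviation estimate from the proof of Lemma~\ref{finite approximation}.

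\textbf{Your route.} You observe instead that the order-$l$ expression is exactly $\frac{d^l}{d\epsilon^l}\phi(X_n(\epsilon))$. A single mean value step then produces the order-$(l+1)$ expression at an intermediate point. This removes the $G_1/G_2$ split, the remainders $S_{n,t}$ and $P_n^{(j)}$, and the identities $(*)$--$(***)$. Afterwards $\epsilon$ enters only through the uniform ratio bound with $c_\epsilon=2n|\epsilon|/\lambda_1\to 0$, which is exactly where $n\le d\log\frac{1}{|\epsilon|}$ is needed. Your uniform integrability argument (split on $\Lambda_m\le e^{-\delta m}$, then Cram\'er, Minkowski, H\"older and Vitali) is also simpler than the paper's. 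It gives uniform $L^q$ bounds for every $q$.

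\textbf{What each buys.} Your route is shorter and more transparent. The paper's hands-on route is a template it reuses almost verbatim for Theorems~\ref{Differentiability of Stationary measure} and~\ref{Smoothness for Finite Moment Condition}, where there is no compact-support cut-off. Your reduction would still apply there, but the integrability bounds would have to be rebuilt from the moment conditions.

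\textbf{A point to state explicitly.} Your identity requires $X_n^{(j)}(\epsilon)$ to be the genuine $\epsilon$-derivative, whose prefactor is $j!/(\lambda_1+\epsilon)^j$, not the $j!/\lambda_1^j$ in the displayed closed form. With the literal prefactor, the order-$l$ expression becomes $(1+\epsilon/\lambda_1)^l G_n(\epsilon)$. The quotient then picks up an extra term tending to $\frac{l}{\lambda_1}$ times the order-$l$ integral. So the derivative reading is the one for which the stated limit holds. It is also the reading the paper's own expansion of $T_n^{(j)}$ uses, as its factor $o(m)-j$ shows.
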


The remainder of this section is organized as follows. We first list several preliminary lemmas in \S \ref{Smoothness for Compactly Suppoerted Functions; Subsection; Preliminary}. Then we prove Lemma \ref{finite approximation} and \ref{second step:finite difference converges to derivative} in \S \ref{Smoothness for Compactly Suppoerted Function;Subsection;Proof of Finite approximation Lemma} and \S \ref{Smoothness for Compactly Supported Function; Subsection; Proof of lemma on Convergence of finite difference} respectively. The last subsection \S\ref{SSPfCompact}
finishes the proof of Theorem \ref{infinite differentiability for compactly supported test function}.

\subsection{Preliminaries}\label{Smoothness for Compactly Suppoerted Functions; Subsection; Preliminary}
We list several elementary lemmas that will be used later.

\begin{lm}[Generalized Dominated Convergence Theorem]\label{Generalized Dominated Convergence Lemma}
  Let $\{f_n\},\{g_n\}$ be two sequences of measurable functions in a measure space $(E,\mathfrak{B},\nu)$. Suppose that $|f_n|\le g_n\forall n$, and $g_n$ converges both a.e. and in $L^1(E,\nu)$. Then if $f_n$ converges a.e., it also converges in $L^1(E,\nu)$.
\end{lm}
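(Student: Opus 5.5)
The plan is to reduce the claim to the ordinary dominated convergence theorem applied along a subsequence, after which a subsequence argument upgrades it to convergence of the full sequence.

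\emph{Step 1.} Since $|f_n|\le g_n$ and $g_n\to g$ almost everywhere, the limit $f:=\lim f_n$ satisfies $|f|\le g$ almost everywhere. Because $g\in L^1$ (it is the $L^1$ limit of the $g_n$, and the $L^1$ limit agrees almost everywhere with the pointwise limit), $f\in L^1$.

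\emph{Step 2.} Apply Fatou's lemma to the nonnegative functions $h_n:=g_n+g-|f_n-f|$. Note that $|f_n-f|\le g_n+g$, and $h_n\to 2g$ almost everywhere. Fatou gives
\begin{equation*}
\int 2g\,d\nu\le \liminf_{n}\int \bigl(g_n+g-|f_n-f|\bigr)\,d\nu = 2\int g\,d\nu-\limsup_n\int|f_n-f|\,d\nu,
\end{equation*}
where we used $\int g_n\to\int g$, which follows from $L^1$ convergence.

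\emph{Step 3.} Since $\int g<\infty$, Step 2 yields $\limsup_n\int|f_n-f|\,d\nu\le 0$, that is, $f_n\to f$ in $L^1(E,\nu)$. This is the standard proof of dominated convergence, with the fixed dominating function replaced by the varying $g_n$.

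The only point needing care is Step 1: one must identify the almost-everywhere limit of $g_n$ with its $L^1$ limit, so that $\int g_n\to\int g$ refers to the same $g$ that dominates $|f|$. This follows because some subsequence of $g_n$ converges almost everywhere to the $L^1$ limit, and that subsequence also converges almost everywhere to $g$. There is therefore no real obstacle.
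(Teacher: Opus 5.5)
The paper omits this proof as a standard real-analysis fact, and your argument (Fatou's lemma applied to the nonnegative functions $g_n+g-|f_n-f|$, with the a.e.\ limit $g$ identified with the $L^1$ limit via an a.e.\ convergent subsequence) is exactly that standard proof and is correct. The only blemish is your opening sentence, which announces a subsequence/dominated-convergence strategy that the proof never uses and can simply be deleted; it is also worth noting that $g_n\in L^1$ for large $n$, so splitting $\int h_n$ into three finite integrals involves no $\infty-\infty$.
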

This is a standard fact from real analysis and we omit its proof.

\begin{lm}\label{Convergence of Product of Two Functions}
      Let $\{f_n\},\{g_n\}$ be two sequences of measurable functions in a measure space $(E,\mathfrak{B},\nu)$. Suppose 
      \begin{itemize}
          \item $\{f_n\}$ converges a.e. and there exists $M>0$ such that $|f_n|\le M ,\forall n$. 
          \item $\{g_n\}$ converges in $L^1$ to $g\in L^1(E,\nu)$.
      \end{itemize}
      Then $f_n g_n\to fg$ in $L^1$.
\end{lm}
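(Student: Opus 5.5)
Write $f$ for the a.e. limit of $f_n$; since $|f_n|\le M$ for all $n$, also $|f|\le M$ a.e. The plan is to split the difference into a term controlled by the $L^1$ convergence of $g_n$ and a term controlled by the a.e. convergence of $f_n$:
$$|f_ng_n - fg| \le |f_n|\,|g_n-g| + |f_n-f|\,|g|.$$
The first term satisfies $\int |f_n||g_n-g|\,d\nu \le M\|g_n-g\|_{L^1}\to 0$, directly from the hypothesis $g_n\to g$ in $L^1$.

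For the second term, $|f_n-f|\,|g|\to 0$ a.e. because $f_n\to f$ a.e. It is dominated by $2M|g|$, which is integrable since $g\in L^1(E,\nu)$. The ordinary dominated convergence theorem (or Lemma \ref{Generalized Dominated Convergence Lemma} with the constant sequence $g_n\equiv 2M|g|$) then gives $\int |f_n-f||g|\,d\nu\to 0$.

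Adding the two estimates shows $\|f_ng_n-fg\|_{L^1}\to 0$. The only point needing care is that the bound on $f_n$ must be uniform in $n$: the first term uses it to turn $L^1$ convergence of $g_n$ into convergence of $f_n(g_n-g)$, and the second uses it to get an integrable dominating function.
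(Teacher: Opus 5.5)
Your proof is correct and follows essentially the same route as the paper: the same splitting $|f_ng_n-fg|\le |f_n||g_n-g|+|f_n-f||g|$, with the first term bounded by $M\|g_n-g\|_{L^1}$ and the second handled by dominated convergence. You additionally make explicit the dominating function $2M|g|$ and the fact that $|f|\le M$ a.e., which the paper leaves implicit.
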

\begin{proof}[Proof of Lemma \ref{Convergence of Product of Two Functions}]
    We have 
    $\DS
        \int|f_n g_n-f g| \le \int |f_n (g_n-g)| + \int |(f_n - f)g|.
    $
    The first integral converges to $0$ because
    $\DS \int |f_n (g_n-g)| \le  M ||g-g_n||_{L^1} \to 0.$
   The second integral converges to $0$ by the dominated convergence theorem.
\end{proof}

We need a result from large deviation theory.
 Define $\DS f(s) = \frac{\lambda_1^s+\lambda_2^s}{2}$, $x\ge 0$. A simple analysis using the convexity and asymptotics at infinity
 of $f$ implies that there exists $s_1>0$ such that $f$ is decreasing on $[0,s_1)$ and increasing on $[s_1,\infty)$. Since $f(0) = 1$ and $f(\infty) = \infty$, there exists a unique $s_0>s_1$ such that $f(s_0) = 1$.

\begin{lm}\label{Lemma decay of tail}

        There is $c \!\!=\!\! c(\lambda_1,\lambda_2)>0$ such that  for any $R>0$, one has $\mathbb{P}(X\ge R) < cR^{-s_0}.$

    \end{lm}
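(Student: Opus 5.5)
The natural tool is Markov's inequality applied to the $s$-th moment, with $s$ slightly below $s_0$. That alone gives only $\mathbb{P}(X\ge R)\le C_s R^{-s}$ for every $s<s_0$, because the moment $\mathbb{E}X^{s}$ is finite only for $s<s_0$. The exact exponent $s_0$ therefore needs a sharper argument, and I plan to use a first-passage / union bound over the time at which the partial product is large.

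Recall $d_1=d_2=1$, so $X=\sum_{m\ge0}\Lambda_m$ with $\Lambda_m=\lambda_{i_1}\cdots\lambda_{i_m}$ and $\Lambda_0=1$. The key identity is that $\mathbb{E}\Lambda_m^{s_0}=f(s_0)^m=1$, so $\Lambda_m^{s_0}$ is a nonnegative mean-one martingale.

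\textbf{Step 1: a crude moment bound.} Since $f(s_0)=1$ and $f$ is increasing near $s_0$, for every $s\in(s_1,s_0)$ we have $f(s)<1$. For such $s$, take $s\le1$ and use subadditivity, or for $s>1$ use Minkowski. Either way, $\mathbb{E}X^s\le\sum_m f(s)^{m/\max(1,s)}<\infty$ in the appropriate form, hence $\mathbb{P}(X\ge R)\le C_sR^{-s}$.

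\textbf{Step 2: upgrade to the exponent $s_0$.} Fix a small $\delta>0$ and split $\Lambda_m=\Lambda_m^{\delta}\cdot\Lambda_m^{1-\delta}$. On the event $X\ge R$, pigeonholing against a summable sequence $a_m=c_\delta e^{-\eta m}$ with $\sum a_m\le1$ shows that some $m$ has $\Lambda_m\ge a_mR$. The union bound together with Markov at exponent $s_0$ then gives
\[
\mathbb{P}(X\ge R)\le\sum_m \frac{\mathbb{E}\Lambda_m^{s_0}}{(a_mR)^{s_0}}=R^{-s_0}\sum_m a_m^{-s_0}.
\]
This series diverges, so the union bound needs care. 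Instead, I would use a different exponent for each $m$. For $m\le M:=K\log R$, use exponent $s_0$. For $m>M$, use an exponent $s<s_0$ with $f(s)<1$, which makes the tail sum geometrically small and, by choosing $K$ large, smaller than $R^{-s_0}$.

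\textbf{Step 3: the main obstacle, and the fallback.} The obstacle is the range $m\le K\log R$. With $a_m$ replaced by $1/(M+1)$, that range contributes $(\log R)^{1+s_0}R^{-s_0}$, which loses a logarithmic factor. To remove it, I would follow the Kesten–Goldie implicit renewal approach. Write $X=1+\lambda_{i_1}X'$, where $X'$ is an independent copy of $X$, and consider $g(R)=R^{s_0}\mathbb{P}(X\ge R)$. The stationarity equation gives $g$ as a renewal-type equation under the tilted measure $\lambda_i^{s_0}/2$, which is a probability measure because $f(s_0)=1$. The perturbation term $\mathbb{P}(X\ge R)-\mathbb{P}(\lambda_{i_1}X'\ge R)$ is controlled by Step 1 via $\mathbb{P}(R-1\le \lambda X'<R)$. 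Since the tilted drift $\sum\tfrac{\lambda_i^{s_0}}{2}\log\lambda_i=f'(s_0)/\ldots>0$, the renewal theorem yields that $g$ is bounded. This is where I expect the real work to lie. Alternatively, one could cite Goldie's theorem, which gives $\mathbb{P}(X>R)\sim CR^{-s_0}$ directly under the non-arithmetic condition. In the arithmetic case, boundedness of $g$ still follows from the same renewal inequality, using only a bounded-solution argument rather than exact asymptotics.
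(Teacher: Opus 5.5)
Your proposal is correct, but it takes a different route from the paper. The paper gives no proof of this lemma; it simply cites the spectral-gap theory of Guivarc'h and Le Page \cite{guivarc2016spectral} for the Pareto tail of the affine recursion. You instead run the one-dimensional Kesten--Goldie implicit renewal argument on $X=1+\lambda_{i_1}X'$, and you have the right ingredients:

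The tilt $\lambda_i^{s_0}/2$ is a probability vector because $f(s_0)=1$.

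The tilted drift is exactly $f'(s_0)$, which is positive since $s_0>s_1$.

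The forcing term $\mathbb{P}(R-1\le\lambda_{i_1}X'<R)\ge 0$ is integrable against $R^{s_0-1}\,dR$. Indeed, with $Y=\lambda_{i_1}X'$ that integral equals $s_0^{-1}\mathbb{E}[(Y+1)^{s_0}-Y^{s_0}]$, which is finite by your Step 1; for $s_0>1$ you need $\mathbb{E}X^{s_0-1}<\infty$.

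When you write it out, note that the forcing term $\psi(x)=e^{s_0x}\,\mathbb{P}(e^x-1\le\lambda_{i_1}X'<e^x)$ (in the variable $x=\log R$) need not be directly Riemann integrable. So, as in Goldie, proceed as follows:

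1. Pass to smoothed functions $\check g(x)=\int_{-\infty}^x e^{-(x-u)}g(u)\,du$ and $\check\psi$. Here $\check\psi$ is directly Riemann integrable because $\psi\in L^1$.
2. Write $\check g=\check\psi*U$. This holds because the remainder $\check g*\mu^{*n}$ vanishes, the untilted walk drifting to $-\infty$.
3. Bound $\check g$ using $\sup_y U([y,y+1])<\infty$ for the renewal measure $U$ of the positively drifting tilted walk.
4. De-smooth via $g(x)\le e^{1+s_0}\check g(x)$, which uses only monotonicity of $R\mapsto\mathbb{P}(X\ge R)$.

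This boundedness argument never invokes a renewal limit theorem. It therefore covers the arithmetic case $\log\lambda_1/\log\lambda_2\in\mathbb{Q}$ as well, and it delivers exactly the upper bound the paper needs from elementary one-dimensional renewal theory. The cited result gives more: precise asymptotics $\mathbb{P}(X>R)\sim CR^{-s_0}$ under a non-arithmeticity hypothesis, in a much more general matrix setting. The price is considerably heavier machinery.

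Your Step 2 is a dead end, since it loses a factor $(\log R)^{1+s_0}$, and can be dropped. Only the moment bound of Step 1 is actually used.
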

Lemma \ref{Lemma decay of tail} is proved in, for example,  \cite{guivarc2016spectral}.

\subsection{Proof of Lemma \ref{finite approximation}}\label{Smoothness for Compactly Suppoerted Function;Subsection;Proof of Finite approximation Lemma}
Note that in the left hand side of the target inequality, the number of summands is bounded by some constants depending on $l$ only, and the coefficient $L(k_1,\cdots,k_l)$ is also uniformly bounded in terms of $l$. Therefore, it suffices to deal with terms of the form
$$|\int  \phi^{(k)} (X) \prod_{j=1}^{l} (X^{(j)})^{k_j} -\int  \phi^{(k)} (X_n) \prod_{j=1}^{l} (X^{(j)}_n)^{k_j}| ,$$

    which is less or equal to
   $$
        |\int  \phi^{(k)} (X) (\prod_{j=1}^{l} (X^{(j)})^{k_j}-\prod_{j=1}^{l} (X^{(j)}_n)^{k_j})| 
        +|\int  (\phi^{(k)} (X)-\phi^{(k)} (X_n)) \prod_{j=1}^{l} (X^{(j)}_n)^{k_j}|
        := J_1 + J_2 . $$
    We first estimate $J_1$. 

    Note that $\displaystyle X^{(j)} - X_n^{(j)} = \frac{j!}{\lambda_1^j} \sum_{m=n+1}^{\infty} \lambda_{i_1}\cdots \lambda_{i_m}\binom{o(m)}{j} =: R_n^{(j)}$, and we compute
    \begin{align*}
        &\prod_{j=1}^{l} (X^{(j)})^{k_j} - \prod_{j=1}^{l} (X_n^{(j)})^{k_j} =\prod_{j=1}^{l} (X_n^{(j)} + R_n^{(j)})^{k_j} - \prod_{j=1}^{l} (X_n^{(j)})^{k_j}\\
        &=(\sum_{s_1=0}^{k_1} \binom{k_1}{s_1} (X_n^{(1)})^{k_1-s_1} (R_n^{(1)})^{s_1} )\cdots(\sum_{s_l=0}^{k_l} \binom{k_l}{s_l} (X_n^{(l)})^{k_l-s_l} (R_n^{(l)})^{s_l} ) - \prod_{j=1}^{l} (X_n^{(j)})^{k_j}\\
        &= \sum_{\begin{matrix}
            s_1,\cdots,s_l \\
            0\le s_i \le k_i,\forall i\\
            s_1 + \cdots +s_l >0
        \end{matrix}}  \prod_{j=1}^{l}\binom{k_j}{s_j} (X_n^{(j)})^{k_j-s_j}(R^{(j)}_n)^{s_j}
    \end{align*}
  Again, note that in the last summation, the number of summands is uniformly bounded by some constant depending on $l$, and the coefficients $\displaystyle \prod_{j=1}^{l}\binom{k_j}{s_j}$ are also uniformly bounded in terms of $l$. Therefore we can further reduce  the problem to  estimating
  $$\int \phi^{(k)}(X) \prod_{j=1}^{l} (X_n^{(j)})^{k_j-s_j}(R^{(j)}_n)^{s_j}.$$

  Since $\phi$ is compactly supported, there exists $M\ge 0$ such that supp$\phi\subset[-M,M]$.  Hence the above integral is in fact
  \begin{equation}
      \int \phi^{(k)}(X) \prod_{j=1}^{l} (X_n^{(j)})^{k_j-s_j} (R^{(j)}_n)^{s_j} 1_{\{X\le M\}}
  \end{equation}
  where $1_A$ is the characteristic function of set $A$.

  Note that 
  \begin{align}\label{estimate 1}
      \phi^{(k)}(X)1_{\{X\le M\}} X^{(j)}_n &= \phi^{(k)}(X)\frac{j!}{\lambda_1^j}\sum_{m=1}^{n}\lambda_{i_1}\cdots \lambda_{i_m} \binom{o(m)}{j} 1_{\{X\le M\}}
      \\& \notag
      \le   ||\phi^{(k)}||_{C^0}\frac{j!}{\lambda_1^j}n^jM { \le} K n^l .
  \end{align}
  
  Here we used the fact that $\binom{o(m)}{j}\le (o(m))^j \le n^j$ and $K$ is a constant depending only on $\phi$ and $l$.

  To estimate the remaining terms, we have for
  \begin{align}\label{estimate 2}
      & \prod_{j=1}^{l} (R^{(j)}_n)^{s_j} =  \prod_{j=1}^{l} \left(\frac{j!}{\lambda_1^j}\sum_{m=n+1}^{ \infty}\lambda_{i_1} \cdots \lambda_{i_m}\binom{o(m)}{j}\right)^{s_l} \\
      &\le L  \prod_{j=1}^l(\sum_{m=n+1}^{ \infty} C(m) \lambda_{i_1}\cdots \lambda_{i_m})^{s_j}  
      = L(\sum_{m=n+1}^{\infty} C(m) \lambda_{i_1}\cdots \lambda_{i_m}) ^s 
  \end{align}
  where $\DS C(m) := \sum_{j=1}^{l} \binom{o(m)}{j}$, $s:=s_1+\cdots +s_l $. 
  
  We wish to show exponential smallness of $\DS  \mathbb{E}\left[\left(\sum_{m=n+1}^\infty C(m) \lambda_{i_1}\cdots\lambda_{i_m}\right)^s\right]$. To this end for any large $N\in\mathbb{N}$, we will find a uniform upper bound for 
  $\DS \mathbb{E}\left[\left(\sum_{m=n+1}^N C(m) \lambda_{i_1}\cdots\lambda_{i_m}\right)^s\right]$ that is exponentially small.
  We need to distinguish the cases $s=1$  and $s\ge 2$. Below we do the case $s\ge 2$. The case $s=1$ is similar but easier,and we will sketch its proof after we finish the case $s\ge 2$.

  Let $s^*>1$ be such that $\frac{1}{s}+\frac{1}{s^*}=1$.  By H\"older inequality, for any $0<\rho<1$
  \begin{align}\label{estimate 3}
     & (\sum_{m=n+1}^{N} C(m) \lambda_{i_1}\cdots \lambda_{i_m}) ^s \\
     &\le (\sum_{m=n+1}^{N}(\rho^m)^{s^*})^{\frac{s}{s^*}} \left[\sum_{m=n+1}^{N}
     \left(\frac{\lambda_{i_1}}{\rho}\right)^s \cdots\left(\frac{\lambda_{i_m}}{\rho}\right)^s(C(m))^s\right] \\
    & \le I(\rho) \left[C^s(m) \sum_{m=n+1}^{N}
    \theta_{i_1} \cdots\theta_{i_m} \right],
  \end{align}
where $\DS I(\rho) := \left(\sum_{m=n+1}^{\infty}(\rho^m)^{s^*}\right)^{\frac{s}{s^*}} <\infty$ and $\DS \theta_{i}:= \frac{\lambda_{i}^s}{\rho^s}$, $t \in\mathbb{N}$.

\textbf{Claim:} one can choose $0<\rho<1$ so that there exists $C,\gamma>0$ satisfying

\begin{equation}\label{Equation: Varadhan's Type of Estimate}
   \nonumber \mathbb{E}[\theta_{i_1} \cdots \theta_{i_m} 1_{\{X\le M\}}] \le  Ce^{-\gamma m}.
\end{equation}

  \textbf{Proof of Claim:} Let $\kappa = \kappa(\rho)$ be a positive real to be specified later. We have
  \begin{align*}
      &\mathbb{E}[\theta_{i_1} \cdots \theta_{i_m} 1_{\{\theta_{i_1}\cdots\theta_{i_m} \le e^{\kappa m}\}}]\\
      & = \mathbb{E}[\theta_{i_1} \cdots \theta_{i_m} 1_{\{\theta_{i_1}\cdots\theta_{i_m} \le e^{-\kappa m}\}}] + \mathbb{E}[\theta_{i_1} \cdots \theta_{i_m} 1_{\{e^{-\kappa m} <\theta_{i_1}\cdots\theta_{i_m} \le e^{\kappa m}\}}] \\
      &\le  e^{-\kappa m} + e^{\kappa m} \mathbb{P}\left(\frac{\sum_{t=1}^{m}\log \theta_{i_t}}{m} \ge -\kappa\right)
  \end{align*}
  Note that $\theta_{i_t}$ are i.i.d with $\mathbb{E}[\log\theta_{i_t}] = \frac{s}{2} \log \lambda_1 \lambda_2  +s\log\frac{1}{\rho}$. Thus by standard large deviation theory,  if  $\rho$ is close enough to $1$ and $\kappa$ is so small that $-\kappa > \mathbb{E}[\log\theta_{i_t}]$, then there exists $\delta>0$ depending on $-\kappa -( \frac{s}{2} \log \lambda_1 \lambda_2  +s\log\frac{1}{\rho} )$ such that
  \begin{equation}\label{large deviation}
      \mathbb{P}\left(\frac{\sum_{t=1}^{m}\log \theta_{i_t}}{m} \ge -\kappa\right) \le e^{-\delta m}.
  \end{equation}
Therefore, 
\begin{equation}
    \mathbb{E}[\theta_{i_1} \cdots \theta_{i_m} 1_{\{\theta_{i_1}\cdots\theta_{i_m} \le e^{\kappa m}\}}] \le e^{-\kappa m} + e^{-(\delta - \kappa) m}.
\end{equation}
Note that $\delta$ increases as $\kappa$ decreases. Therefore, we can first fix a set of $\kappa$ and $\rho$ satisfying
$$ -\kappa -\left( \frac{s}{2} \log \lambda_1 \lambda_2  +s\log\frac{1}{\rho} \right) >0$$
and then shrink $\kappa$ to make $ -\delta+\kappa < \frac{\kappa}{2}$, because inequality \eqref{large deviation}
 remains valid with the same $\delta$ as $\kappa$ becomes smaller.    

Then
$$ \mathbb{E}[\theta_{i_1} \cdots \theta_{i_m} 1_{\{\theta_{i_1}\cdots\theta_{i_m} \le e^{\kappa m}\}}] \le e^{-\kappa m} + e^{-(\delta - \kappa) m} \le 2e^{-\frac{\kappa}{2} m} := 2e^{-\gamma m}.$$

Also note that, $\delta$ increases as $\rho$ increases, so if we can fix the above choice of $\kappa$ and increase $\rho$ to make $e^\kappa \rho^s >1$, while the large deviation inequality \eqref{large deviation} remains true, so for this $\rho$ we still have
\begin{equation}
    \mathbb{E}[\theta_{i_1} \cdots \theta_{i_m} 1_{\{\theta_{i_1}\cdots\theta_{i_m} \le e^{\kappa m}\}}] \le  2e^{-\gamma m}.
\end{equation}
with the same $\gamma$.

We are now ready to prove the claim. First we show that $X\le M$ implies 
$$\theta_{i_1} \cdots \theta_{i_m} \le e^{\kappa m}, \forall m\ge n_0 := [s \log_{e^\kappa \rho^s} M]+1.$$
Indeed, if there exists $m>n_0$ with $\frac{\lambda^s_{i_1}}{\rho^s}\cdots \frac{\lambda^s_{i_m}}{\rho^s}=\theta_{i_1} \cdots \theta_{i_m} > e^{\kappa m}$, then
\begin{equation}\label{bounded gives finite expectation}
    M> \lambda_{i_1} \cdots \lambda_{i_m} > (e^\kappa \rho^s)^{\frac{m}{s}}> M
\end{equation}

a contradiction. 

Therefore, for all $m\ge n_0$, 
\begin{equation}
     \mathbb{E}[\theta_{i_1} \cdots \theta_{i_m} 1_{\{X\le M\}}] 
    \le \mathbb{E}[\theta_{i_1} \cdots \theta_{i_m} 1_{\{\theta_{i_1}\cdots\theta_{i_m} \le e^{\kappa m}\}}] \le  2e^{-\gamma m}
\end{equation}
and the claim is proved.

Now fix a $\rho$ as in the above claim. For all $n\ge n_0$,
\begin{align}\label{estimate 4}
    \int\sum_{m=n+1}^{N}\left(\frac{\lambda_{i_1}}{\rho}\right)^s \cdots
    \left(\frac{\lambda_{i_m}}{\rho}\right)^s (C(m))^s 1_{\{X\le M\}} &= C(m)^s \mathbb{E}[\sum_{m=n+1}^N\theta_{i_1} \cdots \theta_{i_m}  1_{\{X\le M\}}] \\
    \le 2\sum_{m=n+1}^{N}C(m)^s e^{-\gamma m} &
    \le e^{-\gamma' n} \quad \text{for some } 0<\gamma' < \gamma,
\end{align}
where in the last inequality we used the fact that $C(m) = \mathcal{O}(m^{l+1})$ . Note that $\gamma'$ depends on $s = s_1+\cdots +s_l$, which is bounded by $l$. Thus we may choose a uniform $\gamma'$ for all choices of $s_1,\cdots s_l$.

In the case $s=1$, we can skip the H\"older inequality step, and directly show  that there exists $C = C(\lambda_1,\lambda_2),\kappa = \kappa(\lambda_2,\lambda_2)>0$ such that for any $m\in\mathbb{N}$, 
$$\mathbb{E}[\lambda_{i_1}\cdots \lambda_{i_m} 1_{\{\lambda_{i_1}\cdots \lambda_{i_m} \le e^{\kappa m} \}} ]\le Ce^{-\frac{\kappa}{2}m}.$$
For any $m\in\mathbb{N}$ with $m> \frac{1}{\kappa}\log M$, $X\le M$ implies $\lambda_{i_1}\cdots\lambda_{i_m} \le e^{\kappa m}$, so that 
\begin{equation*}
     \mathbb{E}[\lambda_{i_1} \cdots \lambda_{i_m} 1_{\{X\le M\}}] 
    \le \mathbb{E}[\lambda_{i_1} \cdots \lambda_{i_m} 1_{\{\lambda_{i_1}\cdots\lambda_{i_m} \le e^{\kappa m}\}}] \le  2e^{-\frac{\kappa}{2} m}.
\end{equation*}

Therefore, for any $N>n>\frac{1}{\kappa}\log M$, we have

\begin{align}\label{Estimate 5}
  \int\!\!\!\!\sum_{m=n+1}^{N}\lambda_{i_1} \cdots
   \lambda_{i_m} C(m) 1_{\{X\le M\}} &= C(m) \mathbb{E}\left[\sum_{m=n+1}^N\lambda_{i_1} \cdots \lambda_{i_m}  1_{\{X\le M\}}\right] \\
    &\nonumber \le C\!\!\!\sum_{m=n+1}^{N}C(m) e^{-\frac{\kappa}{2} m} 
    \le e^{-\gamma' n} \quad 
\end{align}
for some  $0<\gamma' < \frac{\kappa}{2}$ which also gives exponential smallness in the case $s=1$.

Combining \eqref{estimate 1},\eqref{estimate 2},\eqref{estimate 3},\eqref{estimate 4}  and letting $N\to \infty$, we have 
\begin{equation}
    |\int \phi^{(k)}(X) \prod_{j=1}^{l} (X_n^{(j)})^{k_j-s_j}(R^{(j)}_n)^{s_j}|  =|\int \phi^{(k)}(X) \prod_{j=1}^{l} (X_n^{(j)})^{k_j-s_j}(R^{(j)}_n)^{s_j}1_{\{X\le M\}}|
    \end{equation}
    $$ \le Kn^l |\int \prod_{j=1}^{l} (R^{(j)}_n)^{s_j} 1_{\{X\le M\}}| 
    \le Kn^l  e^{-\gamma' n} 
    \le e^{-\gamma'' n} \quad \text{ for some } 0<\gamma'' <\gamma' .
$$
    Here the same $K$ may refer to different constants that are independent of $n$. And the same convention applies to the rest part of the proof.
    
    Consequently 
    \begin{equation}\label{finite approximation; intermediate step; 1}
        |J_1| \le  A e^{-\gamma'' n},
    \end{equation}
    where $A$ is a constant that depends on $\phi,l,\lambda_1,\lambda_2$. 

    Next we turn to $J_2$. Observe that 
    \begin{equation}
         X = X_n + \lambda_{i_1} \cdots \lambda_{i_n} ( \lambda_{i_{n+1}} + \lambda_{i_{n+1}}\lambda_{i_{n+2}} +\cdots) =: X_n + \Lambda_n Y
    \end{equation}
       and $Y$ has the same distribution as $X$. In particular, $Y$ also satisfies the tail property of  Lemma \ref{Lemma decay of tail}. For any $R>0$, we have
      $$
       \int  (\phi^{(k)} (X)-\phi^{(k)} (X_n)) \prod_{j=1}^{l} (X^{(j)}_n)^{k_j} = \int  (\phi^{(k)} (X_n+\Lambda_n Y)-\phi^{(k)} (X_n)) \prod_{j=1}^{l} (X^{(j)}_n)^{k_j}=: \Gamma_1 + \Gamma_2 .$$
       where 
       $\displaystyle \Gamma_j=\int_{\{\mathcal{Y}_j\}} \int_{\mathcal{A}^n} (\phi^{(k)} (X_n+\Lambda_n Y)-\phi^{(k)} (X_n)) \prod_{j=1}^{l} (X^{(j)}_n)^{k_j} \,dp^n \,d\nu_{n+1}
       $.  

\noindent
Here $R$ is a large constant chosen later,
$\mathcal{Y}_1=\{Y\geq R\}$, $\mathcal{Y}_2=\{Y< R\}$ are subsets of
$\Sigma_{n+1}:= \{(i_{n+1}, i_{n+2} , \cdots)| i_j\in \mathcal{A} = \{1,2\},\forall j\ge n+1\}$ and we write $\nu_{n+1}$ for the $(\frac{1}{2},\frac{1}{2})$-Bernoulli measure on $\Sigma_{n+1}$ .
Now
\begin{align*}
   &\int_{\mathcal{A}^n} (\phi^{(k)} (X_n+\Lambda_n Y)-\phi^{(k)} (X_n)) \prod_{j=1}^{l} (X^{(j)}_n)^{k_j} \,dp^n \quad p^n = \left(\frac{1}{2},\frac{1}{2}\right)^n, \text{ the product measure on } \mathcal{A}^n  \\
   =&\int_{\mathcal{A}^n} (\phi^{(k)} (X_n+\Lambda_n Y)-\phi^{(k)} (X_n)) \prod_{j=1}^{l} (X^{(j)}_n)^{k_j} 1_{\{X\le M \text{ or } X_n \le M\} }\,dp^n\\
  \le  & 2||\phi^{(k)}||_{C^0}\int_{\mathcal{A}^n}  \prod_{j=1}^{l} (X^{(j)}_n)^{k_j}1_{\{X\le M \text{ or } X_n \le M\} } \,dp^n  \le D \sum_{m=1}^n m^{l+1} e^{-\alpha m} \quad \text{ for some }\alpha>0 \\
  \le & D(\phi,l,\lambda_1 , \lambda_2)
\end{align*}
where in the penultimate inequality, we use the same argument as for estimating $\DS \prod_{j=1}^{l} (R^{(j)}_n)^{s_j}$.
Namely

\begin{align*}
    |\prod_{j=1}^{l} (X^{(j)}_n)^{k_j}1_{\{X\le M \text{ or } X_n \le M\} }| &\le  L  \prod_{j=1}^l\left(\sum_{m=1}^{n} C(m) \lambda_{i_1}\cdots \lambda_{i_m}\right)^{s_j}   1_{\{X\le M \text{ or } X_n \le M\} }\\
      &= L\left(\sum_{m=1}^{n} C(m) \lambda_{i_1}\cdots \lambda_{i_m}\right) ^s   1_{\{X\le M \text{ or } X_n \le M\} }
\end{align*}
where $\DS C(m) := \sum_{j=1}^{l} \binom{o(m)}{j}$, $s:=s_1+\cdots +s_l $ and $L:=\frac{l!}{\lambda_1^l}$.  Taking the expectation yields

\begin{align*}
    &\int_{\mathcal{A}^n}  \prod_{j=1}^{l} (X^{(j)}_n)^{k_j}1_{\{X\le M \text{ or } X_n \le M\} } \,dp^n \le  L \sum_{m=1}^n \mathbb{E} [ C(m) \lambda_{i_1}^s\cdots \lambda_{i_m} ^s   1_{\{X\le M \text{ or } X_n \le M\} }]\\
  &  \le D\sum_{m=1}^{n} m^{l+1} e^{-\alpha m} \quad \text{for some }\alpha>0, D>0
\end{align*}
where the last inequality will follow from the same  argument we used to derive 
\eqref{Estimate 5} and \eqref{estimate 4}. We only need to notice  here the condition $1_{\{X\le M \text{ or }X_n \le M\}}$ works equally well as $1_{\{X\le M \}}$ because we are only dealing with the first $n$ terms.

Thus \begin{equation}\label{finite approximation; intermediate step; 2}
     |\Gamma_1| \le  D \mathbb{P}(Y\ge R) \le D R^{-s}.
\end{equation}
 where $s=s_0(\lambda_1 , \lambda_2)>0$ comes from Lemma \ref{Lemma decay of tail}. 

Next, we divide $\Gamma_2$ into two parts.
\begin{align*}
    &\int_{\{Y< R\}} \int_{\mathcal{A}^n} (\phi^{(k)} (X_n+\Lambda_n Y)-\phi^{(k)} (X_n)) \prod_{j=1}^{l} (X^{(j)}_n)^{k_j} \,dp^n \,d\nu_{n+1} \\
    =& \int_{\{Y< R\}} \int_{\{\Lambda_n \le r^n\}} (\phi^{(k)} (X_n+\Lambda_n Y)-\phi^{(k)} (X_n)) \prod_{j=1}^{l} (X^{(j)}_n)^{k_j} \,dp^n \,d\nu_{n+1} \\
    +& \int_{\{Y< R\}} \int_{\{\Lambda_n > r^n\}} (\phi^{(k)} (X_n+\Lambda_n Y)-\phi^{(k)} (X_n)) \prod_{j=1}^{l} (X^{(j)}_n)^{k_j} \,dp^n \,d\nu_{n+1}\\
    =:& Q_1 + Q_2
\end{align*}
Here $r$ is any fixed real number satisfying  $(\lambda_1 \lambda_2)^{\frac{1}{2}}<r<1$.  Then there exists $\eta = \eta(r),c = c(r)>0$ so that $\mathbb{P}(\Lambda_n > r^n) \le ce^{-\eta n}$.

We have
\begin{equation}\label{finite approximation; intermediate step; 3}
    |Q_1| \le \int_{\{Y< R\}} \int_{\{\Lambda_n \le r^n\}} ||\phi^{(k+1)}||_{C^0} Rr^n |\prod_{j=1}^{l} (X^{(j)}_n)^{k_j} 1_{\{X\le M \text{ or } X_n \le M\}} \,dp^n \,d\nu_{n+1}
    \le DRr^n
\end{equation}
where the last inequality holds for the same reason as when we estimated $\Gamma_1$.

Moreover, \begin{align}\label{finite approximation; intermediate step; 4}
    |Q_2| &\le \int_{\{Y< R\}} \int_{\{\Lambda_n > r^n\}} 2||\phi^{(k)}||_{C^0} \prod_{j=1}^{l} (X^{(j)}_n)^{k_j} 1_{\{X\le M \text{ or } X_n \le M\}} \,dp^n \,d\nu_{n+1} \\
    &\le D n^{l+1} \mathbb{P}(\Lambda_n >r^n) 
    \le Dn^{l+1} e^{-\eta n}.
\end{align}

Summing \eqref{finite approximation; intermediate step; 1}, \eqref{finite approximation; intermediate step; 2},
\eqref{finite approximation; intermediate step; 3} and \eqref{finite approximation; intermediate step; 4} we have
\begin{align*}
    |&\int \!\!\!\! \sum_{k_1 + 2k_2 + \cdots lk_l = l} \!\!\!\! \!\!\!\! L(k_1, \cdots, k_l) \phi^{(k)} (X) \prod_{j=1}^{l} (X^{(j)})^{k_j} 
        - \int \!\!\!\! \sum_{k_1 + 2k_2 + \cdots lk_l = l} \!\!\!\!\!\!\!\! L(k_1, \cdots, k_l) \phi^{(k)} (X_n) \prod_{j=1}^{l} (X^{(j)}_n)^{k_j}|\\
        &\le D(n^{l+1} e^{-\gamma'' n} + Rr^n + R^{-s} + e^{-\eta n}), \label{finite approximation; last but one step}
\end{align*}
    where $D$ depends only on $\phi , l,\lambda_1\lambda_2$.  

    Now fix $R = \beta^{-\frac{n}{s}}$ where $\beta$ is any real number satisfying $0<\beta<1$ and $r \beta^{-\frac{1}{s}}<1$. Then fix $0<\theta<1$ so that
    $$\theta > \max\{ e^{-\gamma''}, e^{-\eta}, \beta , r\beta^{-\frac{1}{s}}\}.$$
    
    Then there exists $c>0$ depending only on $\phi,l,\lambda_1 \lambda_2$ so that 
\begin{align*}
     \left|\int \sum_{k_1 + 2k_2 + \cdots lk_l = l} L(k_1, \cdots, k_l) \left[\phi^{(k)} (X) \prod_{j=1}^{l} (X^{(j)})^{k_j} 
     -\phi^{(k)} (X_n) \prod_{j=1}^{l} (X^{(j)}_n)^{k_j}\right]\right|
        &\le c \theta^n.
\end{align*}

The lemma is proven.\qed

\subsection{Proof of Lemma \ref{second step:finite difference converges to derivative}}\label{Smoothness for Compactly Supported Function; Subsection; Proof of lemma on Convergence of finite difference}

    Since the summation $\sum_{k_1 + 2k_2 + \cdots lk_l = l}$ and the coefficients $L(k_1, \cdots, k_l)$ are governed by $l$, which is fixed, it suffices to prove that on each summand, the limit can be computed by taking the formal derivative:
    \begin{equation}\label{finite difference;intermediate step; 1}
          \lim_{\epsilon\to 0} \frac{1}{\epsilon} (\int  \phi^{(k)} (X_n(\epsilon)) \prod_{j=1}^{l} (X^{(j)}_n (\epsilon))^{k_j} -\int  \phi^{(k)} (X_n) \prod_{j=1}^{l} (X^{(j)}_n)^{k_j}) 
          \end{equation}
        $$=\int  \phi^{(k+1)} (X)X^{(1)} \prod_{j=1}^{l} (X^{(j)})^{k_j} + 
        \int  \phi^{(k)} (X) \sum_{j=1}^{l} (k_j (X^{(j)})^{k_j -1} X^{(j+1)}\prod_{i\neq j} (X^{(i)})^{k_i}) .$$

To that end, we similarly divide the difference into two parts.
\begin{align*}
    &\int  \phi^{(k)} (X_n(\epsilon)) \prod_{j=1}^{l} (X^{(j)}_n (\epsilon))^{k_j}-\int  \phi^{(k)} (X_n) \prod_{j=1}^{l} (X^{(j)}_n)^{k_j})\\
    =&\int  (\phi^{(k)} (X_n(\epsilon))-\phi^{(k)}(X_n))
    \prod_{j=1}^{l} (X^{(j)}_n )^{k_j} 
    +\int  \phi^{(k)} (X_n(\epsilon)) (\prod_{j=1}^{l} (X^{(j)}_n (\epsilon))^{k_j}-\prod_{j=1}^{l} (X^{(j)}_n)^{k_j})) \\
    =:& G_1 + G_2.
\end{align*}

First we compute
\begin{align*}
    \frac{1}{\epsilon} G_1&= \int \phi^{(k+1)} (X_n + \xi_n (X_n(\epsilon) - X_n)) \frac{X_n (\epsilon) - X_n}{\epsilon}\prod_{j=1}^{l} (X_n^{(j)})^{k_j} \\
    &= \int \phi^{(k+1)} (X_n + \xi_n (X_n(\epsilon) - X_n)) (X_n^{(1)})+\sum_{t=1}^{n-1}S_{n,t} \epsilon^t)\prod_{j=1}^{l} (X_n^{(j)})^{k_j} 1_{\{X_n\le M \text{ or } X_n (\epsilon) \le M\}}.
\end{align*}
Here $\DS S_{n,t} = \frac{1}{\lambda_1^{t+1}} \sum_{m=1}^{n} \lambda_{i_1} \cdots\lambda_{i_m} \binom{m}{t+1}$. 

We first show that the terms containing $S_{n,t}$'s converge to zero. For each $1\le m\le n $, since $n = \mathcal{O}(\log\frac{1}{|\epsilon|})$ and $o(m)\le n$,  if $|\epsilon|$ is small enough, 

$$\frac{\lambda_{i_1}(\epsilon) \cdots\lambda_{i_m}(\epsilon)}{\lambda_{i_1} \cdots\lambda_{i_m}} = \frac{(\lambda_1 + \epsilon)^{o(m) }\lambda_2^{m-o(m)}}{\lambda_1 ^{o(m) }\lambda_2^{m-o(m)}} = 
\left(1+\frac{\epsilon}{\lambda_1}\right)^{o(m)}\to 1,\text{ as }\epsilon\to 0,$$
Hence $X_{n}(\epsilon)\le M$ implies $X_n \le 2M$, and we have
$$S_{n,t}\le \frac{1}{\lambda_1^t}  \sum_{m=1}^n \lambda_{i_1}\cdots\lambda_{i_m}n^{t+1} \le \frac{2n^{t+1}}{\lambda_1^t}M.  $$
Since $n$ is of order $-\log |\epsilon |$, it is not hard to show that
\begin{equation}\label{higher order remainder is zero, intermediate step}
    \phi^{(k+1)}(X_n + \xi_n (X_n(\epsilon) - X_n))\sum_{t=1}^{n-1}S_{n,t} \epsilon^t \to 0,  \text{ a.e. as }\epsilon\to 0, \text{ and it is bounded.}
\end{equation}

Also, note that
$$\prod_{j=1}^{l} (X_n^{(j)})^{k_j} 1_{\{X_n\le M \text{ or } X_n (\epsilon) \le M\}}\le \prod_{j=1}^{l} (X_n^{(j)})^{k_j} 1_{\{X_n\le 2M \}}$$
and the right hand side converges both a.e. and in $L^1$. To see the convergence in $L^1$, observe that it is increasing in $n$, and the pointwise limit 
$\DS \prod_{j=1}^l (X^{(j)})^{k_j} 1_{\{ X\le 2M\}}$ is integrable, by the argument used for deriving \eqref{estimate 4}. Thus it is convergent in $L^1$ by the Monotone Convergence Theorem. Moreover, the left hand side converges a.e., so by Lemma \ref{Generalized Dominated Convergence Lemma}, 

\begin{equation}
    \prod_{j=1}^{l} (X^{(j)})^{k_j} 1_{\{X_n\le M \text{ or } X_n (\epsilon) \le M\}} \text{ converges in } L^1,
\end{equation}
which combined with \eqref{higher order remainder is zero, intermediate step} allows us to apply Lemma \ref{Convergence of Product of Two Functions} to conclude
\begin{equation}\label{higher order remainder converges to zero,final step}
    \int(\phi^{(k+1)}(X_n + \xi_n (X_n(\epsilon) - X_n))(\sum_{t=1}^{n-1}S_{n,t} \epsilon^t )\prod_{j=1}^{l} (X_n^{(j)})^{k_j} 1_{\{X_n\le M \text{ or } X_n (\epsilon) \le M\}}\to 0.
\end{equation}

On the other hand, we have
$$\begin{cases}
    \phi^{(k+1)}(X_n + \xi_n (X_n(\epsilon) - X_n))\to \phi^{(k+1)}(X) \text{ a.e. and is bounded} \\
   (X_n^{(1)})^{k_1+1} \prod_{j=2}^{l} (X^{(j)})^{k_j} 1_{\{X_n\le M \text{ or } X_n (\epsilon) \le M\}}\xrightarrow{L^1}  (X_n^{(1)})^{k_1+1}\prod_{j=2}^{l} (X_n^{(j)})^{k_j} 1_{\{X\le M\} }
\end{cases}$$
where the second convergence follows from the same computations as in \eqref{estimate 4}
 and the generalized dominated convergence lemma \ref{Generalized Dominated Convergence Lemma}. Therefore by Lemma \ref{Convergence of Product of Two Functions}, 
 \begin{align}\label{taking derivative on phi, obtaining an extra first order derivative}
      &\lim_{\epsilon\to 0} \int\phi^{(k+1)}(X_n + \xi_n (X_n(\epsilon) - X_n)) (X_n^{(1)})\prod_{j=1}^{l} (X_n^{(j)})^{k_j} 1_{\{X_n\le M \text{ or } X_n (\epsilon) \le M\}}\\
      \nonumber
      &=\int\phi^{(k+1)}(X) (X^{(1)})^{k_1 +1}\prod_{j=2}^{l} (X^{(j)})^{k_j} 1_{\{X\le M \}}
      = \int\phi^{(k+1)}(X) (X^{(1)})^{k_1 +1}\prod_{j=2}^{l} (X^{(j)})^{k_j}.
 \end{align}
 Combining \eqref{higher order remainder converges to zero,final step} and \eqref{taking derivative on phi, obtaining an extra first order derivative} gives \begin{equation}\label{estimate of G1}
     \lim_{\epsilon\to 0} \frac{G_1}{\epsilon}  = \int\phi^{(k+1)}(X) (X_n^{(1)})^{k_1 +1}\prod_{j=2}^{l} (X^{(j)})^{k_j}.
 \end{equation}
 
 Finally, we compute $G_2$. First a simple calculation shows 
 $$X_n^{(j)}(\epsilon) = X_n^{(j)} + \frac{j!}{\lambda_1^j}\left(\sum_{m=1}^n \binom{o(m)}{j}  \frac{(o(m)-j)}{\lambda_1}  \lambda_{i_1} \cdots\lambda_{i_m}\epsilon\right) + P_n^{(j)}=:X_n^{(j)} + T_n^{(j)},$$
 where the term $P_n^{(j)}$ contains all terms with power of $\epsilon$ at least $2$. Now
 \begin{align*}
   \prod_{j=1}^{l} (X^{(j)}_n (\epsilon))^{k_j}&-\prod_{j=1}^{l} (X^{(j)}_n)^{k_j}=\prod_{j=1}^{l} (X^{(j)}_n +T_n^{(j)})^{k_j}-\prod_{j=1}^{l} (X^{(j)}_n)^{k_j}\\
   &=\prod_{j=1}^{l} (\sum_{s_j=0}^{k_j} \binom{k_j}{s_j} (X_n^{(j)})^{k_j-s_j} (T_n^{(j)})^{s_j})^{k_j}-\prod_{j=1}^{l} (X^{(j)}_n)^{k_j}\\
   &= \sum_{\begin{matrix}
       0\le s_i\le k_i,i=1,\cdots,l\\
       s_1 +\cdots +s_l>0
   \end{matrix}} \prod_{j=1}^l \binom{k_j}{s_j} (X_n^{(j)})^{k_j -s_j} (T_n^{(j)})^{s_j} .  \quad \quad \quad (*)
 \end{align*}

The terms in the right hand side of $(*)$ that contains $\epsilon^t,t\ge 2$ will contribute $0$ in the limit 
$\displaystyle \lim_{\epsilon\to 0}\frac{G_2}{\epsilon}$, for the same reason as in \eqref{higher order remainder converges to zero,final step}. Therefore
\begin{align*}
& \lim_{\epsilon\to 0}\frac{G_2}{\epsilon}=\\
&\lim_{\epsilon\to 0}\frac{1}{\epsilon} \int\phi^{(k)}(X_n(\epsilon))  \!\!\!\! \sum_{\begin{matrix}
       0\le s_i\le k_i\\
       s_1 +\cdots +s_l=1
   \end{matrix}}  \!\!\!\!\prod_{j=1}^l \binom{k_j}{s_j} (X_n^{(j)})^{k_j -s_j} \left[\frac{j!}{\lambda_1^j}\sum_{m=1}^n \binom{o(m)}{j} \frac{(o(m)-j)}{\lambda_1}  \lambda_{i_1} \cdots\lambda_{i_m}\epsilon\right]^{s_j}\\
   =& \lim_{\epsilon\to 0} \int\phi^{(k)}(X_n(\epsilon))\sum_{1\le j\le l,k_j\ge 1} k_j \frac{\prod_{i=1}^l (X_n^{(i)})^{k_i} }{X_n^{(j)}}  X_n^{(j+1)} \quad\quad\quad  (**)  \\ 
  = & \lim_{\epsilon\to 0}\int \phi^{(k)}(X_n(\epsilon))\sum_{1\le j\le l,k_k\ge 1} (\prod_{i=1,i\neq j}^{l} (X_n^{(i)})^{k_i} \frac{\,d}{\,d\epsilon}(X_n^{(j)}(\epsilon))^{k_j}|_{\epsilon=0}) \\
 =&   \sum_{1\le j\le l,k_j\ge 1} \int\phi^{(k)}(X) k_j \prod_{i=1,i\neq j}^{l} (X^{(i)})^{k_i}(X^{(j)})^{k_j-1} X^{(j+1)}.
 \quad\quad\quad  (***) 
\end{align*}

Here to get $(**)$ we used the fact that
$$\frac{j!}{\lambda_1^j}\sum_{m=1}^n \binom{o(m)}{j} \frac{1}\lambda_1 (o(m)-j) \lambda_{i_1} \cdots\lambda_{i_m} = \frac{(j+1)!}{\lambda_1^{j=1}}\sum_{m=1}^n \binom{o(m)}{j+1}  \lambda_{i_1} \cdots\lambda_{i_m} = X_n^{(j+1)}, $$
and $(***)$ holds for the same reason as before. This together with \eqref{estimate of G1} completes the proof. \qed
\subsection{Proof of Theorem \ref{infinite differentiability for compactly supported test function}}
\label{SSPfCompact}

 We are now ready to prove Theorem \ref{infinite differentiability for compactly supported test function}. 

 \begin{proof}
 \label{Smoothness for Compactly Supported Function: Final Proof}

Note that in Lemma \ref{finite approximation}, the constant $c$ and $\theta$ depends continuously on the parameters $\lambda_1 , \lambda_2$. Therefore, we may choose a pair of $c,\theta$ such that Lemma \ref{finite approximation} is true for $\epsilon$ sufficiently small.

Next, choose $n = n(\epsilon) = 2\log_{\theta} |\epsilon|$ in Lemma \ref{finite approximation}. Note that 
$\displaystyle\lim_{\epsilon\to 0} \frac{c\theta^n}{\epsilon} = 0$.
Hence
\begin{align*}
   & \lim_{\epsilon\to 0} \frac{h^{(l)}(\epsilon) - h^{(l)}(0)}{\epsilon} \\
   =& \lim_{\epsilon\to 0} \left[\frac{h^{(l)}(\epsilon) -\mathbb{E}[ \sum_{k_1 + 2k_2 + \cdots lk_l = l} L(k_1, \cdots, k_l) \phi^{(k)} (X_n(\epsilon)) \prod_{j=1}^{l} (X^{(j)}_n(\epsilon))^{k_j}]}{\epsilon} \right. \\
   -&\frac{h^{(l)}(0) -\mathbb{E}[ \sum_{k_1 + 2k_2 + \cdots lk_l = l} L(k_1, \cdots, k_l) \phi^{(k)} (X_n)) \prod_{j=1}^{l} (X^{(j)}_n)^{k_j}] }{\epsilon} \\
   +&\left. \frac{\mathbb{E}[ \sum_{k_1 + 2k_2 + \cdots lk_l = l} L(k_1, \cdots, k_l) (\phi^{(k)} (X_n(\epsilon)) \prod_{j=1}^{l} (X^{(j)}_n (\epsilon))^{k_j}-\phi^{(k)} (X_n) \prod_{j=1}^{l} (X^{(j)}_n)^{k_j})]}{\epsilon}\right]\\
   =& 0+0 + \mathbb{E}[ \sum_{k_1 + 2k_2 + \cdots (l+1)k_l = l+1} L(k_1, \cdots, k_{l+1}) \phi^{(k)} (X) \prod_{j=1}^{l+1} (X^{(j)})^{k_j}].
\end{align*}
The theorem is proved.
 \end{proof}

%% file: Examples_of_Non-Differentiability.tex
\section{Examples of Non-Differentiability}\label{Section: Examples of Non-Differentiability}
\subsection{Overview}
In this section we prove Theorem \ref{Section: non-diff example; Main Theorem} and \ref{Section: Non-diff Examples; Main Theorem 2}. We begin by recalling the precise statements and roughly describing how the assumptions will come into use.\\

\noindent
{\bf Theorem \ref{Section: non-diff example; Main Theorem}.}
  {\em   Assume $0<\lambda_1<1<\lambda_2$ are such that $\lambda_1 \lambda_2 <\frac{1}{4}$. Then there exists a smooth, bounded function $\phi$ such that $h(\epsilon) := \int \phi \,d_{\lambda_1+\epsilon , \lambda_2}$ is not differentiable at $0$.}
\\

\noindent
{\bf Theorem \ref{Section: Non-diff Examples; Main Theorem 2}.}
{\em     Assume $0<\lambda_1<1<\lambda_2$ are such that $\lambda_1\lambda_2<1$ and  $\log_2\lambda_2 >1+ \log_{\frac{1}{\lambda_1}} \lambda_2$. Then there exists a smooth, bounded function $\phi$ such that $h(\epsilon) := \int \phi \,d_{\lambda_1+\epsilon , \lambda_2}$ is not differentiable at $0$.}
\\

The assumptions in the last two results reflect the local thickness of $\mu$. In particular, they imply that the partial sum $\DS X_N = \sum_{m=0}^{N} \lambda_{i_1}\cdots\lambda_{i_m}$ can be large while the scaling factor $\Lambda_N = \lambda_{i_1}\cdots \lambda_{i_m}$ is tiny, and that the probability of this happening is not too small, thanks to self-similar structure of the random variable $X$. This leads to the  stationary measure of certain small intervals being large compared to their length, and in turn produces non-differentiability.  

In addition, the assumption $\lambda_1\lambda_2<\frac{1}{4}$ implies that the \textit{Lyapunov dimension} of $\mu$, which in our setting is simply equal to {\it entropy over Lyapunov exponent:}\\ $\DS \frac{h_\nu}{\chi} = \frac{\log2}{-\frac{1}{2}\log\lambda_1 - \frac{1}{2}\log\lambda_2}$, is less than $1$. On the other hand, the Lyapunov dimension is an upper bound for the Hausdorff dimension of the stationary measure of an iterated function system. In particular, $\mu_{\lambda_1,\lambda_2}$ has Hausdorff dimension less than one and is therefore singular (with respect to Lebesgue measure). For more details of Lyapunov dimension, we refer to \cite[theorem~3.2.1]{BSSBook} as well as Chapter $10$ of the same reference. On the other hand, having Lyapunov dimension less than $1$ is not a necessary condition for such thickness, as Theorem \ref{Section: Non-diff Examples; Main Theorem 2} shows, since we can take, for instance, $\lambda_1 = \frac{1}{11},\lambda_2 =10$. 

A key step  of the proof is to reduce both theorems to the existence of a family of special test functions 
$\{\phi_N\}_{N\in\mathbb{N}}$.
\begin{prop}\label{Section: Non-diff example; Intermediate Step}
    Under either assumptions of Theorem \ref{Section: non-diff example; Main Theorem}  and \ref{Section: Non-diff Examples; Main Theorem 2}, there exists a family of smooth, nonnegative functions $\{\phi_N\}$ with the following properties.
    \begin{enumerate}[(1)]
        \item There exists $\epsilon_0 >0$ such that for all $N$ and all $\epsilon$ with $|\epsilon|<\epsilon_0$, $h_N'(\epsilon)$ exist and are nonnegative. Here $ h_N(\epsilon) = h_{\phi_N}(\epsilon):=\int \phi_N \,d\mu_{\lambda_1+\epsilon , \lambda_2}$.
        \item $\DS \sum_{N} h_N'(0) = \infty$.
        \item  $\DS \phi:=  \sum  \phi_N$ is a smooth and $\mu_{\lambda_1+\epsilon,\lambda_2}$-integrable for small enough $\epsilon$.
    \end{enumerate}
\end{prop}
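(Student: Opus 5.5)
We construct $\phi_N$ as bumps in $C_c^\infty$ placed where $\mu$ is unusually concentrated. Theorem \ref{infinite differentiability for compactly supported test function} then gives (1) for free: each $h_N$ is differentiable near $0$ with
$$h_N'(\epsilon)=\int_\Sigma \phi_N'(X(\epsilon))\,X^{(1)}(\epsilon)\,d\nu .$$
Throughout we take $d_1=d_2=1$, as in \S\ref{Section: Infinite Differentiability for Smooth Compactly Supported Test Function}. Then $X(\epsilon)>0$, and $X^{(1)}=\lambda_1^{-1}\sum_m \lambda_{i_1}\cdots\lambda_{i_m}\,o(m)\ge 0$. So if $\phi_N$ is nondecreasing on the region where $X(\epsilon)$ lands, then $h_N'(\epsilon)\ge0$.

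To keep $\phi=\sum\phi_N$ bounded and smooth, we let $\phi_N$ be an increasing transition of fixed height $a_N$ on a short interval $I_N=[x_N,x_N+\ell_N]$. It equals $0$ to the left and is constant $a_N$ to the right, up to a point far beyond $x_N$ where it is cut off. The cut-off must happen far enough out that its negative derivative contributes negligibly, by the tail bound of Lemma \ref{Lemma decay of tail}.

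\textbf{Choice of the intervals.} Since the intervals $I_N$ are disjoint and $x_N\to\infty$, the sum is locally finite. We take $\sum a_N<\infty$, for instance $a_N=N^{-2}$, so that $\phi$ is bounded and smooth; this gives (3). The derivative satisfies $\|\phi_N'\|\sim a_N/\ell_N$, and this is where we exploit concentration.

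The intervals come from the words used in the proof sketch. Choose a word $w$ of length $n_N$ whose partial sum $X_{n_N}$ equals some $x_N$ of moderate size, while $\Lambda_{n_N}=\lambda_{w_1}\cdots\lambda_{w_{n_N}}$ is exponentially small. For example, take a block of $2$'s followed by a long block of $1$'s: the $2$'s make the partial sum large, and the $1$'s make $\Lambda$ tiny. On the cylinder $[w]$ we have $X=X_{n_N}+\Lambda_{n_N}Y$, so $X$ lies in an interval of length $\Lambda_{n_N}R$ except with probability $\mathbb{P}(Y>R)$. Taking $\ell_N\approx \Lambda_{n_N}R$, and using $X^{(1)}\gtrsim$ const on the cylinder (coming from the contribution of the early terms), we get
$$h_N'(0)\gtrsim a_N\,\frac{2^{-n_N}}{\Lambda_{n_N}}.$$

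\textbf{The hard step.} The main obstacle is to ensure $2^{-n_N}/\Lambda_{n_N}$ grows faster than $1/a_N$ decays. This is the role of the hypotheses of Theorems \ref{Section: non-diff example; Main Theorem} and \ref{Section: Non-diff Examples; Main Theorem 2}.

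Under $\lambda_1\lambda_2<1/4$, alternating or balanced words have $\Lambda_n\approx(\lambda_1\lambda_2)^{n/2}\ll 2^{-n}$. Words with $k$ twos and $k$ ones already beat $2^{-2k}$ exponentially, and we arrange the twos first so that the partial sum reaches height $\approx\lambda_2^{k}$.

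Under the second hypothesis, we use $k$ twos followed by $j\approx k\log_{1/\lambda_1}\lambda_2$ ones, so that $\Lambda\approx\lambda_2^k\lambda_1^j\approx 1$. We then append more ones. The inequality $\log_2\lambda_2>1+\log_{1/\lambda_1}\lambda_2$ should be exactly what makes $\lambda_2^k$ (the location scale) beat the cost $2^{k+j}$ in the ratio. To see this, we may need to measure the interval length relative to the location, or to rescale $\phi_N$ by $x_N$.

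A further point is that the intervals $I_N$ must be separated and must not overlap other heavy cylinders in a way that spoils monotonicity; monotone steps avoid this issue. Finally, $\phi$ must also be $\mu_{\lambda_1+\epsilon,\lambda_2}$-integrable for small $\epsilon$, which is automatic from boundedness. We then conclude $\sum_N h_N'(0)=\infty$ by choosing $n_N$ growing fast.
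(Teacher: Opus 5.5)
Your proposal has two genuine gaps. The more serious one is that the case of Theorem \ref{Section: Non-diff Examples; Main Theorem 2} is never proved; the other is that the cut-off breaks property (1).

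\textbf{The cut-off breaks property (1).} The measure $\mu_{\lambda_1+\epsilon,\lambda_2}$ has unbounded support. So your own criterion for $h_N'(\epsilon)=\int\phi_N'(X(\epsilon))X^{(1)}(\epsilon)\,d\nu\ge0$ needs $\phi_N'\ge 0$ everywhere. Cutting $\phi_N$ off to make it $C_c^\infty$ adds a strictly negative term. Calling that term ``negligible'' does not give the nonnegativity that (1) asserts, which is what the Fatou step deducing the main theorems uses. The negative term is also weighted by $X^{(1)}(\epsilon)$, which the tail bound of Lemma \ref{Lemma decay of tail} for $X$ does not control. It would have to be dominated uniformly in $N$ and $|\epsilon|<\epsilon_0$, and for $\epsilon\neq0$ the concentration you exploit at $\epsilon=0$ is no longer available. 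The cut-off is unnecessary anyway, since $\sum a_N<\infty$ already bounds $\sum\phi_N$. Take $\phi_N(x)=\int_{-\infty}^x\phi_N'$ with $0\le\phi_N'\in C_c^\infty$ and use Lemma \ref{Section: Non-diff Example; Lemma on first order derivative}, as the paper does; the proof of Theorem \ref{infinite differentiability for compactly supported test function} only needs $\phi$ to be constant beyond some $M$.

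\textbf{The second hypothesis is never used.} You stop at ``should be exactly what makes\dots'', which is where the argument has to happen. Your bound $h_N'(0)\gtrsim a_N2^{-n_N}/\Lambda_{n_N}$ has already discarded the location factor that this hypothesis is built for. The missing ingredient is Lemma \ref{Section: Non-diff Example; Lemma of X(1) bigger than X}, $X^{(1)}\ge cX$; on your cylinder $[2^k1\cdots]$ the term $m=k+1$ alone gives $X^{(1)}\ge\lambda_2^k$. With it, the paper uses $N$ twos followed by $\lceil\kappa N\rceil$ ones, where $\rho=\lambda_2\lambda_1^\kappa<1<2^{-1-\kappa}\lambda_2$. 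It takes $0\le\phi_N'\le1$, equal to $1$ on $B(x_N,r\rho^N)$ and supported in $B(x_N,2r\rho^N)$. The heights are then at most $4r\rho^N$, while $h_N'(0)\ge\frac c8(2^{-1-\kappa}\lambda_2)^N\to\infty$.

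There is also a fix within your own accounting. The hypothesis forces $\lambda_1<\frac12$, since $\lambda_1\ge\frac12$ would give $\log_{1/\lambda_1}\lambda_2\ge\log_2\lambda_2$. Appending $j$ further $1$'s multiplies $2^{-n}/\Lambda_n$ by $(2\lambda_1)^{-j}$ and keeps the location above $\lambda_2^k$. Choosing $j=j_N$ large then makes $a_N2^{-n_N}/\Lambda_{n_N}$, and hence $h_N'(0)$, unbounded. This needs only $\lambda_1<\frac12$, so it covers both hypotheses.

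Your case $\lambda_1\lambda_2<\frac14$ is correct once the cut-off is removed. One word $2^k1^k$ per $N$ with heights $a_N$ gives $h_N'(0)\gtrsim a_N(4\lambda_1\lambda_2)^{-k}$. This is simpler than the paper's construction via Cram\'er's theorem, the thresholds $M(N)$ and the sets $A_N$.
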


In the rest of this section, we first show how to deduce the main results from Proposition \ref{Section: Non-diff example; Intermediate Step} in \S \ref{Subsection: NOn-diff; use intermediate step to conclude}. Then we list several preliminary lemmas in \S \ref{Subsection: Non-diff; Preliminaries} before proving Proposition \ref{Section: Non-diff example; Intermediate Step} under the assumption of each theorem in \S \ref{Subsection: Non-diff; Prop for thm1} and \S \ref{Subsection: Non-diff; Prop for thm2} respectively.

\subsection{Proposition \ref{Section: Non-diff example; Intermediate Step} implies the main Theorems}\label{Subsection: NOn-diff; use intermediate step to conclude}

\begin{proof}
Given $\{\phi_N\}$ and $\phi$ satisfying the assumptions of Proposition \ref{Section: Non-diff example; Intermediate Step},  define $h(\epsilon)  = \int \phi\,d\mu_{\lambda_1+\epsilon,\lambda_2}, 0<|\epsilon|<\epsilon_0$ . We will show that $h$ is not differentiable at $0$. 

Let $ F(\epsilon,N) := \frac{h_N(\epsilon) - h_N(0)}{\epsilon}, N\in\mathbb{N},0<|\epsilon|<\epsilon_0$. Note that 
$$\lim_{\epsilon\to 0} F(\epsilon,N) = h_N'(0),\;\; \forall N\in\mathbb{N}$$
and 
$$F(\epsilon,N)\ge 0,  \;\; \forall 0<|\epsilon|<\epsilon_0, \; N\in\mathbb{N}$$
since $h_N'(\epsilon)\ge 0$ for all $0<|\epsilon|<\epsilon_0$. 
Thus 

\begin{align}
    \liminf_{\epsilon\to 0} \frac{h(\epsilon) - h(0)}{\epsilon} &= \liminf_{\epsilon
    \to 0} \frac{\int \sum_{N=1}^\infty \phi_N \,d\mu_{\lambda_1+\epsilon,\lambda_2} - \int\sum_{N=1}^\infty \phi_N \,d\mu_{\lambda_1,\lambda_2} }{\epsilon}\nonumber \\
    &= \liminf_{\epsilon
    \to 0}  \frac{\sum_{N=1}^\infty  (\int  \phi_N \,d\mu_{\lambda_1+\epsilon,\lambda_2} - \int \phi_N \,d\mu_{\lambda_1,\lambda_2}) }{\epsilon} \nonumber\\
    &= \liminf_{\epsilon\to 0} \int F(\epsilon,N) \,dc(N) \quad c \text{ is the counting measure on }\mathbb{N}\nonumber\\
    &\ge \int\liminf_{\epsilon\to 0} F(\epsilon,N) \,dc(N) \quad \text{ by Fatou's Lemma }\nonumber \\
    &= \sum_{N=1}^\infty h_N'(0) = \infty,
\end{align}
which implies $h$ is not differentiable at $0$. Note that the second equality holds because 
$\DS \sum_N \int \phi_N\,d\mu_{\lambda_1+\epsilon,\lambda_2}$ is absolutely convergent.
\end{proof}

\subsection{Preliminary Lemmas}\label{Subsection: Non-diff; Preliminaries}

In this section we collect several lemmas needed for the proof of Proposition \ref{Section: Non-diff example; Intermediate Step}. Recall that for a fixed choice of $0<\lambda_1<1<\lambda_2$ with $\lambda_1\lambda_2 < 1$,  
we defined the random variable
    $$X(\epsilon) = \sum_{m=0}^\infty  \lambda_{i_1}(\epsilon)\cdots\lambda_{i_m}(\epsilon) $$
  where  $\lambda_1(\epsilon) = \lambda_1+\epsilon$ and $\lambda_2(\epsilon) = \lambda_2$, so that $\mu_{\lambda_1+\epsilon,\lambda_2}$ is the law of $X(\epsilon)$. We also defined
    $$X^{(1)}(\epsilon) = \frac{1}{\lambda_1(\epsilon)} \sum_{m=1}^\infty \lambda_{i_1}(\epsilon)\cdots\lambda_{i_m}(\epsilon) o(m)$$
    to be the formal derivative of $X(\epsilon)$ with respect to $\epsilon$. Here $o(m)$ stands for the number of $1$'s in $i_1, \cdots,i_m$. For simplicity, $X$ and $X^{(1)}$ will stand for $X(0)$ and $X^{(1)}(0)$ respectively.

\begin{lm}\label{Section: Non-diff Example; Lemma on first order derivative}
    Let $\lambda_1,\lambda_2$ be such that $\lambda_1\lambda_2 <1$. Suppose $\phi\in C^1 (\mathbb{R})$ and its derivative $\phi'$ is compactly supported. Then the function $h(\epsilon) : = \int \phi \,d\mu_{\lambda_1+\epsilon,\lambda_2}$ is differentiable at $\epsilon = 0$ and the derivative is given by
    $$h'(0) = \int_{\Sigma} \phi'(X)X^{(1)}\,d\nu.$$

Here  $\nu = (\frac{1}{2},\frac{1}{2})^{\mathbb{N}}$ is the Bernoulli measure on $\Sigma = \{0,1\}^{\mathbb{N}}$. 
\end{lm}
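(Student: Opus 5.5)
The plan is to reduce to Theorem \ref{infinite differentiability for compactly supported test function} by splitting $\phi$ into a compactly supported part and a part that is locally constant outside a compact set. Since $\phi'$ is supported in some $[-M,M]$, the function $\phi$ is constant on $(-\infty,-M]$ and on $[M,\infty)$, say equal to $a$ and $b$ respectively. Choose a smooth monotone function $\psi$ with $\psi=a$ on $(-\infty,-M-1]$ and $\psi=b$ on $[M+1,\infty)$. Then $\phi=\psi+(\phi-\psi)$, where $\phi-\psi$ is $C^1$ with compact support.

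As in \S\ref{Section: Infinite Differentiability for Smooth Compactly Supported Test Function}, we may first assume $d_1=d_2=1$. In that case $X\ge 1>0$ almost surely, so only the value $b$ matters on the relevant half-line, and it is cleaner to take $\psi$ supported on the right.

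The compactly supported piece is handled by the case $l=0\to1$ of the argument of \S\ref{Section: Infinite Differentiability for Smooth Compactly Supported Test Function}, that is, by Lemmas \ref{finite approximation} and \ref{second step:finite difference converges to derivative} with $l=0$. Those proofs only use $\phi^{(k)}$ for $k\le l+1=1$, boundedness of $\phi,\phi'$, and compact support. There is one caveat: the mean value step in $G_1$ uses $\phi'$ at an intermediate point, and the a.e. convergence $\phi'(X_n+\xi_n(\cdots))\to\phi'(X)$ needs continuity of $\phi'$, which holds since $\phi\in C^1$. So for $C^1$ compactly supported $g$ I would rerun that argument verbatim (or approximate $g$ in $C^1$ by smooth compactly supported functions, using that the bounds in Lemma \ref{finite approximation} depend only on $\|g\|_{C^1}$ and the support). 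This gives derivative $\int g'(X)X^{(1)}\,d\nu$.

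For the piece $\psi$, it suffices to treat $\psi$ smooth with $\psi'\ge 0$ compactly supported. Here $\psi'$ is itself smooth and compactly supported, so the same finite-approximation scheme applies. The key point is that the only place where compact support of $\phi$ (rather than of $\phi'$) was used in Lemma \ref{finite approximation} at level $l=0$ is the trivial term $J_1$, which is void for $l=0$. In $J_2$ one estimates $\psi(X)-\psi(X_n)$, which is bounded by $2\|\psi\|_{C^0}$ and by $\|\psi'\|_{C^0}\Lambda_nY$. The split into $\{Y\ge R\}$, $\{\Lambda_n>r^n\}$ and the remaining set, together with Lemma \ref{Lemma decay of tail}, then gives an exponential bound $c\theta^n$ without any indicator $1_{\{X\le M\}}$. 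Lemma \ref{second step:finite difference converges to derivative} for $l=0$ then yields
\[
\frac{1}{\epsilon}\Big(\int\psi(X_n(\epsilon))-\int\psi(X_n)\Big)\to\int\psi'(X)X^{(1)}\,d\nu.
\]
To justify this, write the difference as $\psi'(\text{intermediate})\,(X_n(\epsilon)-X_n)/\epsilon$. The derivative factor vanishes unless $X_n$ or $X_n(\epsilon)$ is at most $M+1$, which is exactly the indicator structure exploited there.

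Finally, I would combine the two pieces with $n=n(\epsilon)=2\log_\theta|\epsilon|$, exactly as in \S\ref{SSPfCompact}. The main obstacle I expect is verifying the uniformity: the constants $c,\theta$ in the finite approximation must hold for $\mu_{\lambda_1+\epsilon,\lambda_2}$ uniformly in small $\epsilon$. As in \S\ref{SSPfCompact}, this follows from the continuous dependence of the large-deviation rate and of the tail exponent $s_0$ on $(\lambda_1,\lambda_2)$. I would also check carefully that the indicator $1_{\{X_n\le M+1 \text{ or } X_n(\epsilon)\le M+1\}}$ produces an integrable dominating function, via the estimate \eqref{estimate 4} with $l=1$ and $s=1$.
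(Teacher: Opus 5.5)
Your proposal is correct and is essentially the paper's proof. The paper reruns the $l=0$ case of the finite-approximation and finite-difference lemmas from the compactly supported section, noting that compact support of $\phi$ enters only through indicators such as $1_{\{X_N\le M \text{ or } X\le M\}}$, and these remain valid when $\phi$ is merely constant on $(M,\infty)$. Your splitting $\phi=\psi+(\phi-\psi)$ is harmless but unnecessary, since the argument you give for $\psi$ (only $\psi'$ compactly supported) applies verbatim to $\phi$ itself.
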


\begin{proof}
    This is essentially Theorem \ref{infinite differentiability for compactly supported test function}, with the only difference being that only $\phi'$ is assumed to be compactly supported, not $\phi$ itself. But this matters little since we can prove the lemma following the same scheme.
    
    More precisely, suppose $M>0$ is such that $\phi'|_{(M,\infty)}\equiv0$. Then $\phi$ restricted to $(M,\infty)$ is constant. Recall that in the proof of Theorem
    \ref{infinite differentiability for compactly supported test function}, we defined 
    $$X(\epsilon) = \sum_{m=0}^\infty  \lambda_{i_1}(\epsilon)\cdots\lambda_{i_m}(\epsilon) , \quad X = X(0)$$
    as well as the finite truncate
    $$X_N(\epsilon) = \sum_{m=0}^N  \lambda_{i_1}(\epsilon)\cdots\lambda_{i_m}(\epsilon), \; X_N = X_N(0)\quad N\in\mathbb{N}.$$
Then we showed that 
$$ \int_{\Sigma}\phi(X)\,d\nu - \int_\Sigma \phi(X_N) \,d\nu$$ 
is exponentially small and 
$$ \frac{\int_\Sigma \phi(X_N(\epsilon)) \,d\nu - \int_\Sigma \phi(X_N) \,d\nu}{\epsilon}$$
converges to the actual derivative, if $N$ is chosen appropriately, see lemma \ref{finite approximation} and \ref{second step:finite difference converges to derivative} respectively. Moreover, in that proof, the only place we have used $\phi$ being compactly supported is to reduce the above two quantities into
$$\int_{\Sigma}\phi(X)- \phi(X_N) \,d\nu = \int_{\Sigma}\left(\phi(X)- \phi(X_N)\right) 1_{\{X_N\le M \text{ or } X\le M\}} \,d\nu  $$
and 
$$\int_\Sigma \phi(X_N(\epsilon)) -\phi(X_N) \,d\nu = \int_\Sigma \left(\phi(X_N(\epsilon)) -\phi(X_N)\right)1_{\{X_N(\epsilon)\le M \text{ or } X_N\le M\}} \,d\nu$$
for some $M>0$. But these are still true under our current setting of Lemma \ref{Section: Non-diff Example; Lemma on first order derivative}, since $\phi$ is constant on $(M,\infty)$. Therefore the proof of Theorem \ref{infinite differentiability for compactly supported test function}  goes through to give the lemma.
\end{proof}

\begin{lm}\label{Section: Non-diff Example; Lemma of X(1) bigger than X}
    There exists a constant $c>0$ such that with probability $1$, $X^{(1)}\ge cX$.
\end{lm}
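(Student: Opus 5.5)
The plan is a direct pathwise comparison of the two series, using the normalization $d_1=d_2=1$ in force in this section. Write $\Lambda_m:=\lambda_{i_1}\cdots\lambda_{i_m}$ with $\Lambda_0=1$. Then
$$X=\sum_{m\ge 0}\Lambda_m,\qquad X^{(1)}=\frac{1}{\lambda_1}\sum_{m\ge1}\Lambda_m\, o(m).$$
Both series converge for $\nu$-almost every word, and all their terms are nonnegative.

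A termwise comparison $\Lambda_m\lesssim \Lambda_m o(m)$ fails only for the initial block of indices before the first $1$ appears, where $o(m)=0$ but the terms $\lambda_2^m$ may be large. So the key step is to absorb this initial block into the first term carrying the symbol $1$.

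Let $\tau:=\min\{m\ge1: i_m=1\}$. This is finite with probability $1$, since $\nu(\tau=\infty)=0$; we discard that null set. For $m<\tau$ we have $\Lambda_m=\lambda_2^m$, and $\Lambda_\tau=\lambda_2^{\tau-1}\lambda_1$. Using $\lambda_2>1$, the initial block satisfies
$$\sum_{m=0}^{\tau-1}\Lambda_m=\sum_{m=0}^{\tau-1}\lambda_2^m\le \frac{\lambda_2^{\tau}}{\lambda_2-1}=\frac{\lambda_2}{(\lambda_2-1)\lambda_1}\,\Lambda_\tau=:C\,\Lambda_\tau .$$
Here $C$ depends only on $\lambda_1,\lambda_2$ and not on $\tau$, which is the point.

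Next, for every $m\ge\tau$ we have $o(m)\ge1$, so
$$\sum_{m\ge\tau}\Lambda_m\le\sum_{m\ge\tau}\Lambda_m\,o(m)\le \sum_{m\ge1}\Lambda_m\,o(m)=\lambda_1 X^{(1)}.$$
Combining the two estimates gives
$$X=\sum_{m<\tau}\Lambda_m+\sum_{m\ge\tau}\Lambda_m\le (1+C)\sum_{m\ge\tau}\Lambda_m\le (1+C)\lambda_1 X^{(1)} .$$
Hence $X^{(1)}\ge cX$ almost surely with
$$c:=\frac{1}{(1+C)\lambda_1}=\frac{\lambda_2-1}{\lambda_1(\lambda_2-1)+\lambda_2}>0 .$$

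The only genuine obstacle is the initial run of $2$'s, where $o(m)=0$. It is handled by the geometric-sum bound above, which works precisely because $\lambda_2>1$ makes the last term of that run dominant up to a constant factor. Everything else is a termwise inequality between nonnegative series.
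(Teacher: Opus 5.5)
Your proof is correct and follows essentially the same route as the paper. Both arguments split at the first occurrence $\tau$ of the symbol $1$, dominate the initial block $\sum_{m<\tau}\lambda_2^m$ by a constant multiple of $\lambda_2^{\tau-1}$ using $\lambda_2>1$, and compare the remaining terms with $X^{(1)}$ via $o(m)\ge 1$. The only difference is bookkeeping: the paper compares against $\lambda_2^{\tau-1}S_\tau$ and uses $S_\tau\ge \frac{1}{1-\lambda_1}$, which gives a slightly different but equally valid constant $c$.
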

\begin{proof}
    With probability $1$, $1$ will appear among $i_1,i_2,\cdots$.  Assume that the first occurrence is at $i_k$. Then 
    $$X = \sum_{j=0}^{k-1}\lambda_2^j + \lambda_2^{k-1}\lambda_1 (1+\lambda_{i_{k+1}} + \lambda_{i_{k+1}}\lambda_{i_{k+2}} +\cdots)$$
    and 
  \begin{align*}
      X^{(1)} &= \frac{1}{\lambda_1}\lambda_2^{k-1}\lambda_1 (1+\lambda_{i_{k+1}}o(k+1) + \lambda_{i_{k+1}}\lambda_{i_{k+2}}o(k+2) +\cdots)\\
      &\ge \lambda_2^{k-1}(1+\lambda_{i_{k+1}} + \lambda_{i_{k+1}}\lambda_{i_{k+2}} +\cdots) 
  \end{align*}  
  Note that 
  $$\sum_{j=0}^{k-1}\lambda_2^j \le \lambda_2^{k-1} \sum_{j=0}^\infty \lambda_2^{-j} = \frac{\lambda_2}{\lambda_2-1}\lambda_2^{k-1}$$
  and 
  $$S_k:=1+\lambda_{i_{k+1}} + \lambda_{i_{k+1}}\lambda_{i_{k+2}} +\cdots \ge \sum_{j=0}^\infty \lambda_1^j = \frac{1}{1-\lambda_1}.$$

  Therefore, \begin{align*}
      X&\le\lambda_2^{k-1} (\frac{\lambda_2}{\lambda_2-1}+ \lambda_1 S_k)
    \le \lambda_2^{k-1} (\frac{\lambda_2}{\lambda_2-1} \frac{S_k}{\frac{1}{1-\lambda_1}}+ S_k) \\
      &=\lambda_2^{k-1}S_k (\frac{\lambda_2 (1-\lambda_1)}{\lambda_2-1}+1)
      =  \left(\frac{\lambda_2 (1-\lambda_1)}{\lambda_2-1}+1\right) X^{(1)}.
  \end{align*}
  Hence $c:=  (\frac{\lambda_2 (1-\lambda_1)}{\lambda_2-1}+1)^{-1}$ meets the requirement of the lemma.
\end{proof}

\subsection{Proof of Proposition \ref{Section: Non-diff example; Intermediate Step}, under the assumption $\lambda_1\lambda_2<\frac{1}{4}$} \label{Subsection: Non-diff; Prop for thm1}

\begin{proof}
    Let us define the random variables
    $\DS X = \sum_{m=0}^\infty  \lambda_{i_1}\cdots\lambda_{i_m} $
    and for each $N\in \mathbb{N}$, 
    $$X_N =  \sum_{m=0}^N  \lambda_{i_1}\cdots\lambda_{i_m} , \: \Lambda_N = \lambda_{i_1}\cdots\lambda_{i_N},\quad
    Y_N =  \sum_{m=N+1}^\infty  \lambda_{i_{N+1}}\cdots\lambda_{i_m}.  $$
    Note that 
    \begin{itemize}
        \item $\DS X = X_N + \Lambda_N Y_N $;
        \item $X_N$ and $\Lambda_N$ are independent of $Y_N$;
        \item $Y_N$ and $X-1$ has the same distribution.
    \end{itemize}

    Let $\theta:=\sqrt{\lambda_1\lambda_2}$ and fix $\rho\in (\theta,\frac{1}{2})$ (this is where we used the assumption $\lambda_1\lambda_2 <\frac{1}{4}$). By Cram\'er's theorem, there exists $\delta>0$ such that
    \begin{equation}\label{Cramer's theorem}
        \mathbb{P}\{ \Lambda_N \le \rho^N\} \ge 1-e^{-\delta N}\: \text{ for all sufficiently large }N.
    \end{equation}

    For each $N\in \mathbb{N}$, consider positive integers $M$ satisfying the following two properties:
    \begin{equation}
    \label{Section: Non-diff Example; Definition of M(N)}
        \begin{cases}
        M\mathbb{P}\{X_N \ge M\} \ge N^{-\frac{1}{2}}, \\
        \mathbb{P}\{X_N \ge M\} \ge 2e^{-\delta N}.
    \end{cases}
    \end{equation}
    It is not hard to see that for sufficiently large $N\in\mathbb{N}$, the set of positive integers satisfying 
    \eqref{Section: Non-diff Example; Definition of M(N)} is non-empty and finite, so we are able to define $M(N)$ to be the largest one of them. 
    
 Next, define $A_N$ to be the set of values that $X_N$ can take that are not less than $ M(N)$. Then by construction,
        \begin{equation}
            \mathbb{P}\{X_N\ge M(N)\} = \mathbb{P}\{X_N \in A_N\} \ge 
            \max\left(2e^{-\delta N} , \frac{1}{M\sqrt{N}}\right).
        \end{equation}

Next, let $r>0$ be such that $\mathbb{P}\{X-1\le r\} = \frac{1}{2}$. For each $N\in\mathbb{N}$, define
$$B_N = \{x\in\mathbb{R}: \text{dist}(x,A_N) \le r\rho^N\}.$$

Finally, let $\phi_N$ be a smooth function such that $\phi_N(-2r) = 0$ and that its derivative $\phi_N'$ satisfies
\begin{itemize}
    \item $0\le \phi_N\le 1 $;
    \item $\phi_N|_{B_N} \equiv 1$; 
    \item supp$\phi_N$ is contained in the $2r\rho^N$-neighborhood of $A_N$. That is, $\phi'_N$ is zero on the set $\{x\in\mathbb{R}: \text{dist}(x,A_N) > 2r\rho^N\}=:B(A_N , 2r\rho^N)$. 
    \end{itemize}
Note that since supp$\phi_N'$ is contained in $[-2r,\infty)$, we have $\phi_N (x) = \int_{-\infty}^x \phi_N'(t)\,dt$.
    
    Suppose $M(N)$ is well-defined for all $N\ge N_0$. We claim that $\{\phi_N\}_{N\ge N_0}$ satisfies the assumptions of Proposition \ref{Section: Non-diff example; Intermediate Step}. These are clearly smooth. We verify conditions $(1)-(3)$ 
   below.

\textbf{Proof of $(1)$}. Since $\phi_N'$ is compactly supported, by Lemma \ref{Section: Non-diff Example; Lemma on first order derivative} $h_N$ is differentiable at all $\epsilon$ satisfying $(\lambda_1+\epsilon)\lambda_2 <1$ and the derivative is given by

    $$h_N'(\epsilon) = \int_{\Sigma} \phi'_N (X(\epsilon)) X^{(1)}(\epsilon)\,d\nu,$$
which is clearly non-negative since $\phi_N'\ge 0$ and $X^{(1)}(\epsilon)>0$. This establishes property $(1)$ of Proposition \ref{Section: Non-diff example; Intermediate Step}.

\textbf{Proof of $(2)$} We wish to derive a lower bound for $\phi_N'(0)$ so that $\DS \sum_N \phi_N'(0) = \infty$.
For simplicity, write $ p_N := \mathbb{P}\{X_N \ge M(N)\}$. We have
  \begin{align}
      \mathbb{P}\{X_N \ge M(N) , \Lambda_N <\rho^n \} &=  \mathbb{P}\{X_N \ge M(N) \} + \mathbb{P}\{ \Lambda_N <\rho^n \} -  \mathbb{P}\{X_N \ge M(N) \text{ or } \Lambda_N <\rho^n \}\nonumber \\
      &\ge  p_N + 1-e^{-\delta N} -1 \nonumber\\
      &\ge \frac{1}{2} p_N \quad \text{ since }  p_N=\mathbb{P}\{X_N \ge M(N)\} \ge 2e^{-\delta N}.\label{Section: Non-diff Example; Probability of XN being large and LambdaN being small}
  \end{align}

It follows that 
\begin{align*}
    \mathbb{P}\{X\in B_N\} &= \mathbb{P}\{X_N + \Lambda_N Y_N \in B_N\}
    \ge \mathbb{P}\{X_N\in A_N , \Lambda_N \le \rho^N , Y_N\le r\}\\
    &= \mathbb{P}\{X_N\in A_N , \Lambda_N \le \rho^N\}  \mathbb{P}\{ Y_N\le r\}
    \ge  \frac{1}{2} p_N \frac{1}{2} = \frac{1}{4} p_N.
\end{align*}
Then
\begin{align}
    h_N'(0) &= \int_\Sigma \phi'_N(X)X^{(1)} \ge \int_{\{X\in B_N\}} \phi'_N(X) cX \nonumber\\
    &\ge c(M(N)-r\rho^N)) \mathbb{P}\{X\in B_N\} 
    \ge \frac{c}{4}p_N \frac{M(N)}{2} 
    \ge \frac{c}{8} \frac{1}{\sqrt{N}}.\label{Section: Non-diff Example; estimate of phiN'(0)}
\end{align}
The first inequality above is due to Lemma \ref{Section: Non-diff Example; Lemma of X(1) bigger than X}. In the last inequality we have used \eqref{Section: Non-diff Example; Definition of M(N)}. Finally, the penultimate inequality holds for all sufficiently large $n$, because of the following

    \textbf{Claim}: $\DS \lim_{N\to\infty} M(N) = \infty$.

    \textit{Proof of Claim.} Otherwise there exists $M_0 \in\mathbb{N}$ and a sequence $\{N_k\}$ going to infinity such that $M_k:= M(N_k) < M_0$. We aim to reach a contradiction. By definition, for each $k$ we have
    $$\text{either }\, M_0 \mathbb{P}\{X_{N_k} \ge M_0\} <\frac{1}{\sqrt{N_k}} \text{ or } \, \mathbb{P}\{X_{N_k}\ge M_0\} < 2e^{-\delta N_k}.$$
    But since $X_{N_k}$ converges to $X$ in distribution and $\mu_{\lambda_1,\lambda_2}$ is non-atomic \cite[Proposition~3.1.1]{BSSBook}
    \begin{equation*}
         \begin{cases}
        \DS \lim_{k\to \infty}M_0 \mathbb{P}\{X_{N_k} \ge M_0\} = M_0 \mathbb{P}\{X \ge M_0\}>0\\
        \DS \lim_{k\to \infty} \mathbb{P}\{X_{N_k} \ge M_0\} = \mathbb{P}\{X \ge M_0\}>0
    \end{cases}
    \end{equation*}
   
    and
  $\DS
        \lim_{k\to\infty} \frac{1}{\sqrt{N_k}} = \lim_{k\to \infty} 2e^{-\delta N} = 0,$
        which is contradiction. The claim is proved.

        Now inequality \eqref{Section: Non-diff Example; estimate of phiN'(0)} immediately gives $\DS \sum_{N} \phi_N'(0) = \infty$.

        \textbf{Proof of}$(3)$. We want to show that $\DS \phi:= \sum_N \phi_N$ is a smooth, bounded function. Note that $\phi_N'$ is supported on $[M(N) - 2r\rho^N,\infty)$, and so is $\phi_N$ itself. By the previous claim, $\DS \lim_{N\to\infty} M(N) - 2r\rho^N = \infty$. Therefore, for each $x\in\mathbb{R}$, there exists $N (x)\in\mathbb{N}$ such that $x+1 < M(N) - 2r\rho^N$. It follows that $\DS \phi = \sum_N \phi_N$ is in fact a finite sum $\DS \sum_{N\le N(x)}\phi_N$ when restricted on the interval $[-\infty , x+1)$, on which $\phi$ is therefore smooth since each $\phi_N$ is smooth. Since $x$ is arbitrary, $\phi$ is smooth. 
        
        Observe that the above argument also implies $\DS \phi' = \sum_N \phi_N'$ and $\phi(x) = \int_{\infty}^x \phi'(t)\,dt$. Since each $\phi_N'$ is non-negative, in order to prove boundedness of $\phi$, it suffices to show that 
        $$\sum_N ||\phi_N'||_{L^1}< \infty.$$
        Now for each $N$, $\phi_N'\in [0,1]$ is supported on $C_N$, the $2r\rho^N$-neighborhood of $A_N$, and it is clear that 
 $\DS   \# A_N \le 2^N p_N.$
Thus 
$$
    ||\phi_N'||_{L^1} = \int \phi_N'(t)\,dt 
    \le \mathcal{L}^1 (C_N) 
    \le  4r\rho^N \# A_N
    \le 8rp_N (2\rho)^N \le 8r (2\rho)^N.
$$
Since $\rho<\frac{1}{2}$, it follows that $\DS \sum_N ||\phi_N'||_{L^1}< \infty$ as desired. This finishes the proof of Proposition \ref{Section: Non-diff example; Intermediate Step} and thus 
Theorem \ref{Section: non-diff example; Main Theorem} is established
in the case $\lambda_1 \lambda_2<1/4$.
\end{proof}

\subsection{Proof of Proposition \ref{Section: Non-diff example; Intermediate Step} under the assumption $\log_2\lambda_2 >1+ \log_{\frac{1}{\lambda_1}} \lambda_2$} \label{Subsection: Non-diff; Prop for thm2}

\begin{proof}
    By a simple calculation, the assumption $\log_2\lambda_2 >1+ \log_{\frac{1}{\lambda_1}} \lambda_2$ implies that there exists a constant $\kappa>0$ such that 
    \begin{equation}
        \begin{cases}
            \rho:=\lambda_2 \lambda_1^\kappa <1,\\
            \lambda_2 > 2^{1+\kappa}.
        \end{cases}
    \end{equation}

    For each $N\in\mathbb{N}$, define $\kappa(N) = \lceil\kappa N\rceil$, where $\lceil x\rceil$ stands for the least integer that is larger than $x$. Define $$x_N := \sum_{m=0}^{N+\kappa(N)}\lambda_{j_1}\cdots\lambda_{j_m}
    \quad\mathrm{where}\quad     
     \begin{matrix}   
      (j_1 ,j_2,\cdots,j_{N+\kappa(N)}) =  (\underbrace{2,2\cdots,2} , \underbrace{1,1,\cdots,1}).\\
    \qquad\qquad\qquad\qquad\qquad\;\;\,  N   \quad\quad\quad \kappa(N)
        
  \end{matrix}$$
  Note that with this choice of $(j_1,\cdots,j_{N+\kappa(N)})$, we have $x_N > \lambda_2^N$ and 
  $\DS \prod_{m=1}^{N+\kappa(N)} \lambda_{j_m} <\rho^N$.

  Fix $r>0$ so that $\mathbb{P}\{X-1 \le r\} = \frac{1}{2}$ and let $B(x_N , \rho^N r)$ be the ball centered at $X_N$ with radius $\rho^N r$. Define $\phi_N$ to be a smooth function with $\phi_N(-2r) = 0$ and $\phi_N'$ is such that
  \begin{itemize}
    \item $0\le \phi_N\le 1 $;
    \item $\phi_N|_{B(x_N,\rho^N r)} \equiv 1$; 
    \item supp$\phi_N$ is contained in the $2r\rho^N$-neighborhood of $x_N$. That is, $\phi'_N$ is zero outside $B(x_N , 2r\rho^N)$. 
    \end{itemize}

Note that since supp$\phi_N'$ is contained in $[-2r,\infty)$, we have $\phi_N (x) = \int_{-\infty}^x \phi_N'(t)\,dt$. As before, we verify that the family $\{\phi_N\}$ meets the requirements in Proposition \ref{Section: Non-diff example; Intermediate Step}. 

\textbf{Proof of}$(1)$. This step is the same as in the proof of Theorem \ref{Section: non-diff example; Main Theorem}.

\textbf{Proof of}$(2)$. Since $\phi_N'$ is compactly supported, by Lemma \ref{Section: Non-diff Example; Lemma on first order derivative}, we have
\begin{align}
    h_N'(0) &= \int_{\Sigma} \phi_N'(X) X^{(1)}\,d\nu 
    \ge \int_{A_N} cX\,d\mu_{\lambda_1,\lambda_2}\quad c \text{ is from Lemma \ref{Section: Non-diff Example; Lemma of X(1) bigger than X}} \nonumber \\
    &\ge c(\lambda_2^N - r\rho^N) \mathbb{P}\{X\in B(x_N,\rho^N r)\}\nonumber\\
    &\ge \frac{c}{2}\lambda_2^N\mathbb{P}\{X\in B(x_N,\rho^N r)\} \quad\text{ if N is sufficiently large} \label{Section: Non-diff Example; Estimate 1 in Main theorem 2} .
\end{align}
Moreover, we have
$$
    \mathbb{P}\{X\in B(x_N,\rho^N r)\} = \mathbb{P}\{X_{N+\kappa(N)}+\Lambda_{N+\kappa(N)} Y_{N+\kappa(N)}\in B(x_N,\rho^N r)\} 
    $$$$
    \ge \mathbb{P}\{X_{N+\kappa(N)} = x_N, \Lambda_{N+\kappa(N)} < \rho^N ,Y_{N+\kappa(N)}  \le r \}
    \ge 2^{-N-\kappa(N)} \frac{1}{2} \ge \frac{1}{4}2^{-(1+\kappa)N}.
$$
This together with inequality \eqref{Section: Non-diff Example; Estimate 1 in Main theorem 2} implies 
$\DS h_N'(0)\ge \frac{c}{8} (2^{-1-\kappa}\lambda_2 )^N.$
Since by construction $\lambda_2 > 2^{1+\kappa}$, it is clear that  $\DS \sum_N h_N'(0) = \infty$ as desired.

\textbf{Proof of}$(3)$. By construction, the support of $\phi_N'$ is contained in $[\lambda_2^N - r\rho^N,\infty)$.
Thus the support of $\phi_N$ is also contained in  $[\lambda_2^N - r\rho^N,\infty)$. The right endpoint goes to $\infty$ as $N\to \infty$. Therefore the same argument as before applies to guarantee that
$\DS \phi:= \sum_N \phi_N$ is smooth. We only need to prove that
$\DS \sum_N ||\phi_N'||_{L^1}<\infty$
in order to show that $\phi$ is bounded. But clearly 
$\DS ||\phi_N'||_{L^1} \le 4r\rho^N $.
Since $\rho<1$, the theorem is proved.
\end{proof}

%% file: Differentiability_via_Moment_and_Derivative.tex
\section{Differentiability via Moment Condition and Bound on Derivative}
\label{Section: Differentiability via Moment Condition and Bound on Derivative }

\subsection{First derivative formula.}

In this section we start the proof of \eqref{Differentiability of Stationary measure}, by induction on $r$. We only prove the base case $r=1$ here, and postpone the complete proof to  \S \ref{ScHD},
as we will take advantage of several estimates from \S \ref{Section: Smoothness of Moments}.

Let us state Theorem \ref{Differentiability of Stationary measure} in the case of $r=1$.
\begin{thm}\label{Differentiability of Stationary measure, base case}
    Suppose $0<\lambda_1<1<\lambda_2$ are such that $\frac{\lambda_1+\lambda_2}{2}<1$ and $\phi\in C^1(\mathbb{R})$ is such that $||\phi'||_{C^0}<\infty$. Consider $\DS h(\epsilon) = h_\phi (\epsilon):= \int \phi\,d\mu_{\lambda_1+\epsilon,\lambda_2}$ which is defined for $\epsilon$ so small that $(\lambda_1+\epsilon)\lambda_2<1$. Then $h$ is differentiable at $0$ and the derivative is given by
    $$h'(0) = \int_{\Sigma} h'(X)X^{(1)}\,d\nu.$$
    Here $X^{(1)}$ is a random variable defined as 
    $$X^{(1)} = \frac{1}{\lambda_1} \sum_{m=1}^{\infty} \lambda_{i_1}\cdots\lambda_{i_m}o(m),$$
    where $o(m):= \#\{1\le s\le m:i_s = 1\}.$
\end{thm}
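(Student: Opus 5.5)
The plan is to avoid truncation and instead apply the mean value theorem directly to the infinite series, using the moment condition $\frac{\lambda_1+\lambda_2}{2}<1$ to produce an integrable dominating function. (The formula in the statement should read $h'(0)=\int_\Sigma \phi'(X)X^{(1)}\,d\nu$.) As in \S\ref{Section: Infinite Differentiability for Smooth Compactly Supported Test Function}, I would first reduce to $d_1=d_2=1$ via the affine conjugacy $c(x)=ax+b$. Since $\phi'$ is bounded, $\phi\circ c$ still has bounded derivative, and the extra chain-rule term $\int\phi'(ax+b)(\partial_{\lambda_1}a\,x+\partial_{\lambda_1}b)\,d\mu$ is handled by dominated convergence, because $\mu$ has a finite first moment. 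The finite first moment holds because $\mathbb{E}|X|\le\sum_m\left(\frac{\lambda_1+\lambda_2}{2}\right)^m<\infty$.

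Fix $\epsilon_0>0$ with $\frac{\lambda_1+\epsilon_0+\lambda_2}{2}<1$. For $|\epsilon|<\epsilon_0$, every term of $X(\epsilon)=\sum_m\lambda_{i_1}(\epsilon)\cdots\lambda_{i_m}(\epsilon)$ is smooth in $\epsilon$. Its derivative is $o(m)\lambda_1(\epsilon)^{o(m)-1}\lambda_2^{m-o(m)}$, which is bounded by
$$Z:=\frac{1}{\lambda_1-\epsilon_0}\sum_{m\ge1}o(m)\,\tilde\lambda_{i_1}\cdots\tilde\lambda_{i_m},$$
where $\tilde\lambda_1=\lambda_1+\epsilon_0$ and $\tilde\lambda_2=\lambda_2$. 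By Tonelli and independence,
$$\mathbb{E}Z\le C\sum_m m\left(\frac{\lambda_1+\epsilon_0+\lambda_2}{2}\right)^m<\infty.$$
Hence $Z<\infty$ almost surely. On that event the derivative series converges uniformly in $\epsilon$, so $\epsilon\mapsto X(\epsilon)$ is $C^1$ on $(-\epsilon_0,\epsilon_0)$ with $X'(0)=X^{(1)}$. The computation of $X'(0)$ is exactly the $j=1$ case of the formula for $X^{(j)}$.

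The mean value theorem then gives, almost surely, $\left|\frac{X(\epsilon)-X}{\epsilon}\right|\le Z$. It follows that
$$\left|\frac{\phi(X(\epsilon))-\phi(X)}{\epsilon}\right|\le\|\phi'\|_{C^0}\,Z\in L^1(\nu).$$
Since $\phi\in C^1$, the difference quotient converges almost surely to $\phi'(X)X^{(1)}$. The dominated convergence theorem then yields
$$\frac{h(\epsilon)-h(0)}{\epsilon}=\int_\Sigma\frac{\phi(X(\epsilon))-\phi(X)}{\epsilon}\,d\nu\longrightarrow\int_\Sigma\phi'(X)X^{(1)}\,d\nu.$$

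The main obstacle is the integrable domination, which is precisely where the hypothesis $\frac{\lambda_1+\lambda_2}{2}<1$ is used. The only delicate points are the choice of $\epsilon_0$ small enough to keep the averaged ratio below $1$ after perturbation, and the elementary bound $\mathbb{E}[o(m)\tilde\lambda_{i_1}\cdots\tilde\lambda_{i_m}]\le m\,\mathbb{E}[\tilde\lambda_{i_1}\cdots\tilde\lambda_{i_m}]$. A finite-truncation argument as in Lemma \ref{finite approximation} is unnecessary here, because boundedness of $\phi'$ replaces compact support.
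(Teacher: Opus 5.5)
Your argument is correct, and it takes a genuinely different and shorter route than the paper.

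\textbf{The paper's route.} The paper reuses the two-lemma truncation template from the compactly supported case. First it shows $|\int\phi(X)\,d\nu-\int\phi(X_n)\,d\nu|\le C\theta^n$, with $\theta=\frac{\lambda_1+\lambda_2}{2}$ and constants uniform for nearby $\lambda_1$. It then ties the truncation level to $\epsilon$ via $n\asymp\log(1/|\epsilon|)$ and expands the polynomial $X_n(\epsilon)-X_n=\sum_t a_{n,t}\epsilon^t$. The $t\ge 2$ terms are shown to vanish by computing $\mathbb{E}[a_{n,t}]$ with the binomial identity (Lemma~\ref{Binomial Identity}). The first-order term is handled by the generalized dominated convergence lemma. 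Truncating at height $\log(1/|\epsilon|)$ is what keeps the distortion factors $(1+\epsilon/\lambda_1)^{o(m)}$ bounded while only unperturbed moments are used.

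\textbf{Your route.} You absorb that distortion into the perturbed parameter $\lambda_1+\epsilon_0$, exploiting that the hypothesis $\frac{\lambda_1+\lambda_2}{2}<1$ is open. This yields a single $\epsilon$-independent majorant $Z$ with $\mathbb{E}Z<\infty$. Term-by-term differentiation, the mean value theorem and ordinary dominated convergence then finish the proof, with no coupling of $n$ to $\epsilon$.

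\textbf{What each approach buys.} Your argument gives $h\in C^1(-\epsilon_0,\epsilon_0)$ at no extra cost. It also extends directly to the higher-order statement of Theorem~\ref{Differentiability of Stationary measure}. The ingredients are $|\phi^{(k)}(x)|\le C(1+|x|^{r-k})$, the identity $\|\tilde\lambda_{i_1}\cdots\tilde\lambda_{i_m}\|_{L^r}=(\frac{\tilde\lambda_1^r+\lambda_2^r}{2})^{m/r}$, and Minkowski and H\"older in $L^r$. The paper's template buys uniformity with Theorem~\ref{infinite differentiability for compactly supported test function}. There no moment hypothesis is available and $\mathbb{E}X^{(1)}=\infty$ can occur, so one must localize to $\{X\le M\}$ and use large deviations. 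A global majorant like your $Z$ does not exist in that setting.

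\textbf{Small remarks.}
\begin{enumerate}
\item You also need $\epsilon_0<\lambda_1$, since your bound uses $\lambda_1-\epsilon_0>0$.
\item The reduction to $d_1=d_2=1$ is unnecessary in your approach. The same majorant works for general $d_1,d_2$ with an extra factor $\max_i|d_i|$.
\item Dropping the reduction also removes the chain-rule term, whose justification in your write-up is slightly loose. In that term the measure varies with the parameter. So you would need the $\epsilon$-uniform majorant $\sum_m\tilde\lambda_{i_1}\cdots\tilde\lambda_{i_m}$ on $\Sigma$, not merely a finite first moment of the single measure $\mu$.
\item You are right that the stated formula should read $\phi'(X)X^{(1)}$ rather than $h'(X)X^{(1)}$.
\end{enumerate}
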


Since we are free to make a change of variable, we may assume without loss of generality, that $d_1=1,d_2=1$, see the discussion after Theorem \ref{infinite differentiability for compactly supported test function} in \S \ref{Section: Infinite Differentiability for Smooth Compactly Supported Test Function}. We only need to observe that $\phi\circ c$ still has bounded derivative.

The idea of proving Theorem \ref{Differentiability of Stationary measure, base case} is similar to that of Theorem \ref{infinite differentiability for compactly supported test function}. We shall approximate the actual integral by a finite truncate, and then calculate the derivative using this finite truncate. More precisely, we have the following two lemmas, which are virtually identical to those in \S \ref{Section: Infinite Differentiability for Smooth Compactly Supported Test Function}.

\begin{lm}\label{Lemma on Finite Truncate; Base case}
    Let $\DS  X_n(\epsilon) := \sum_{j=0}^{n}  \lambda_{i_1}(\epsilon)\cdots \lambda_{i_{j-1}}(\epsilon)$ and write $X_n$ for $X_n(0)$. There exists constants $0<\theta<1$ and $C>0$ such that
        $$\left|\int_{\Sigma}\phi(X)\,d
        \nu - \int_{\Sigma}\phi(X_n)\,d\nu\right| < C\theta^n.$$
        Moreover, the constants $C,\theta$ depends continuously on $\lambda_1,\lambda_2$.
\end{lm}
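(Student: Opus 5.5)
Since $\phi'$ is bounded, $\phi$ is Lipschitz, and I will simply bound the difference by a first moment:
\[
\left|\int_\Sigma \phi(X)\,d\nu-\int_\Sigma\phi(X_n)\,d\nu\right|\le \|\phi'\|_{C^0}\,\mathbb{E}|X-X_n| .
\]
(If the indexing of $X_n$ leaves a finite discrepancy with the paper's convention, it is absorbed into $C$.) With $d_1=d_2=1$ I use the decomposition from the proof of Lemma \ref{finite approximation}: $X=X_n+\Lambda_n Y$, where $\Lambda_n=\lambda_{i_1}\cdots\lambda_{i_n}$ depends only on the first $n$ letters and $Y=\sum_{m\ge 0}\lambda_{i_{n+1}}\cdots\lambda_{i_{n+m}}$ is independent of $\Lambda_n$ and has the law of $X$. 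All terms are nonnegative, so there are no absolute values to worry about, and independence gives
\[
\mathbb{E}|X-X_n|=\mathbb{E}[\Lambda_n]\,\mathbb{E}[Y]=\left(\frac{\lambda_1+\lambda_2}{2}\right)^n\mathbb{E}[X].
\]

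Next I need $\mathbb{E}[X]<\infty$. By monotone convergence and independence of the letters,
\[
\mathbb{E}[X]=\sum_{m\ge0}\mathbb{E}[\lambda_{i_1}\cdots\lambda_{i_m}]=\sum_{m\ge 0}\left(\frac{\lambda_1+\lambda_2}{2}\right)^m=\frac{1}{1-\frac{\lambda_1+\lambda_2}{2}},
\]
which is finite precisely because of the hypothesis $\frac{\lambda_1+\lambda_2}{2}<1$. This is the moment condition $f(1)<1$ in the notation of \S\ref{Section: Infinite Differentiability for Smooth Compactly Supported Test Function}; equivalently $s_0>1$ in Lemma \ref{Lemma decay of tail}. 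I therefore take
\[
\theta=\frac{\lambda_1+\lambda_2}{2},\qquad C=\frac{\|\phi'\|_{C^0}}{1-\theta}
\]
(enlarged by a harmless factor if the truncation index is shifted). These constants are explicit continuous functions of $(\lambda_1,\lambda_2)$ on the open region $\{\lambda_1+\lambda_2<2\}$. In particular they can be chosen uniformly for $\lambda_1$ replaced by $\lambda_1+\epsilon$ with $|\epsilon|$ small, which is what the ``moreover'' clause requires.

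The main point needing care is rigour about a.e.\ definition. The series defining $X$ converges $\nu$-a.e.\ (in fact in $L^1$, by the computation above), so $X-X_n=\Lambda_nY$ holds a.e. The independence factorization is justified by Tonelli, since all quantities are nonnegative. No truncation on $\{X\le M\}$ or large deviation estimate is needed here, unlike in Lemma \ref{finite approximation}: the first-moment hypothesis replaces compact support.
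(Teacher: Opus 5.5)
Your proof is correct and is essentially the paper's argument. You bound the difference by $\|\phi'\|_{C^0}\,\mathbb{E}[X-X_n]$ and evaluate this first moment of the tail as a geometric series with ratio $\theta=\frac{\lambda_1+\lambda_2}{2}$; you factor it as $\mathbb{E}[\Lambda_n]\,\mathbb{E}[Y]$ via independence, whereas the paper sums $\sum_{m>n}\theta^m$ term by term. Incidentally, your constant $C=\|\phi'\|_{C^0}/(1-\theta)$ is valid, whereas the paper's displayed $C=\|\phi'\|_{C^0}\frac{\lambda_1+\lambda_2}{1-\lambda_1-\lambda_2}$ is negative (since $\lambda_2>1$) and should read $\|\phi'\|_{C^0}\frac{\lambda_1+\lambda_2}{2-\lambda_1-\lambda_2}$.
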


\begin{lm}\label{Lemma on Finite difference; Base case}
    Let $\epsilon\in \mathbb{R}$ and $n = n(\epsilon)\in \mathbb{N}$ be such that there exists some constant $d>0$ satisfying $\DS \frac{1}{d} \log|\frac{1}{\epsilon}| \le n\le d\log|\frac{1}{\epsilon}|$. Then the following limit holds:
    $$\lim_{\epsilon\to 0} \frac{\int_{\Sigma} \phi(X_n(\epsilon))\,d\nu-\int_{\Sigma}\phi(X_n)\,d\nu }{\epsilon}  = \int_{\Sigma} h'(X)X^{(1)}\,d\nu.$$
\end{lm}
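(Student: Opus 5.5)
The statement to prove is Lemma \ref{Lemma on Finite difference; Base case}. I read the $h'$ in its right-hand side as $\phi'$, consistent with Theorem \ref{Differentiability of Stationary measure, base case}. The plan is to follow the proof of Lemma \ref{second step:finite difference converges to derivative} with $l=0$, replacing compact support by the moment condition $\frac{\lambda_1+\lambda_2}{2}<1$ together with $\|\phi'\|_{C^0}<\infty$.

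\textbf{Step 1 (mean value form).} By the mean value theorem,
$$\frac{\phi(X_n(\epsilon))-\phi(X_n)}{\epsilon} = \phi'\bigl(X_n+\xi_n(X_n(\epsilon)-X_n)\bigr)\,\frac{X_n(\epsilon)-X_n}{\epsilon},$$
with $\xi_n\in[0,1]$. Expanding the finite polynomial gives
$$\frac{X_n(\epsilon)-X_n}{\epsilon}=X_n^{(1)}+E_n(\epsilon), \qquad X_n^{(1)}=\frac{1}{\lambda_1}\sum_{m\le n}\lambda_{i_1}\cdots\lambda_{i_m}\,o(m).$$
Using $(1+\epsilon/\lambda_1)^{o(m)}-1-o(m)\epsilon/\lambda_1\le \tfrac12 (o(m)\epsilon/\lambda_1)^2 e^{o(m)|\epsilon|/\lambda_1}$, the remainder satisfies
$$|E_n(\epsilon)|\le C|\epsilon|\, n^2 e^{n|\epsilon|/\lambda_1}\sum_{m\le n}\lambda_{i_1}\cdots\lambda_{i_m}.$$
Since $n=\mathcal O(\log(1/|\epsilon|))$, the factor $e^{n|\epsilon|/\lambda_1}$ is bounded.

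\textbf{Step 2 (remainder vanishes).} The moment condition gives
$$\mathbb E[\lambda_{i_1}\cdots\lambda_{i_m}]=\Bigl(\tfrac{\lambda_1+\lambda_2}{2}\Bigr)^m=:q^m,\qquad q<1.$$
Hence $\mathbb E\sum_{m\le n}\lambda_{i_1}\cdots\lambda_{i_m}\le (1-q)^{-1}$. Combined with $\|\phi'\|_{C^0}<\infty$ and Step 1, the remainder term contributes at most $C|\epsilon|n^2=\mathcal O(|\epsilon|\log^2(1/|\epsilon|))\to0$. No truncation indicator $1_{\{X\le M\}}$ is needed here, unlike in Section \ref{Section: Infinite Differentiability for Smooth Compactly Supported Test Function}; the first moment replaces the large-deviation claim.

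\textbf{Step 3 (main term).} It remains to show
$$\int\phi'\bigl(X_n+\xi_n(X_n(\epsilon)-X_n)\bigr)X_n^{(1)}\,d\nu\to\int\phi'(X)X^{(1)}\,d\nu.$$
I will apply Lemma \ref{Convergence of Product of Two Functions} with $f_n=\phi'(\cdots)$ and $g_n=X_n^{(1)}$.
\begin{itemize}
\item $f_n$ is bounded by $\|\phi'\|_{C^0}$.
\item $X_n^{(1)}\uparrow X^{(1)}$, and $\mathbb E X^{(1)}=\lambda_1^{-1}\sum_m \mathbb E[\lambda_{i_1}\cdots\lambda_{i_m}o(m)]<\infty$. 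Indeed, conditioning on positions gives $\mathbb E[\lambda_{i_1}\cdots\lambda_{i_m}o(m)]=m\frac{\lambda_1}{2}q^{m-1}$, which is summable. So $g_n\to X^{(1)}$ in $L^1$ by monotone convergence.
\item $f_n$ converges a.e.: $X_n\to X$ a.e., and
$$|X_n(\epsilon)-X_n|\le |\epsilon|\bigl(X_n^{(1)}+|E_n(\epsilon)|\bigr)\le C|\epsilon| \log^2(1/|\epsilon|)\sum_{m}\lambda_{i_1}\cdots\lambda_{i_m}\to0$$
a.e., because the last sum equals $X-1<\infty$ a.s. By continuity of $\phi'$, $f_n\to\phi'(X)$ a.e.
\end{itemize}

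\textbf{Main obstacle.} The delicate point is the a.e. convergence with $n=n(\epsilon)$ depending on $\epsilon$ along a continuum. I will handle it by arguing along arbitrary sequences $\epsilon_k\to0$; the bounds above are uniform in the sequence. The other check is that the Taylor remainder bound really uses only the first moment, which it does because the $\epsilon$-perturbation affects each product $\lambda_{i_1}\cdots\lambda_{i_m}$ only through the multiplicative factor $(1+\epsilon/\lambda_1)^{o(m)}$.
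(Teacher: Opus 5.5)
Your proof is correct. Its skeleton matches the paper's: the mean value theorem, a split into the linear term $X_n^{(1)}$ (the paper's $a_{n,1}$) plus a higher-order remainder, and a dominated-convergence argument for the linear term.

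The real difference is how you control the remainder. The paper expands $X_n(\epsilon)-X_n=\sum_t a_{n,t}\epsilon^t$ and computes each $\int a_{n,t}$ exactly via the binomial identity (Lemma \ref{Binomial Identity}). It then sums the resulting bounds $C|\epsilon|^{t/3}$ over $t\ge 2$. You instead bound the whole remainder pointwise. You use $\lambda_{i_1}(\epsilon)\cdots\lambda_{i_m}(\epsilon)=\lambda_{i_1}\cdots\lambda_{i_m}(1+\epsilon/\lambda_1)^{o(m)}$, the second-order Taylor estimate for $(1+x)^{o}$, and the crude bound $o(m)\le n$. After that, only the first moment $\mathbb{E}[\lambda_{i_1}\cdots\lambda_{i_m}]=q^m$ is needed.

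This buys three things:
\begin{itemize}
\item It removes the need for the binomial identity and the term-by-term summation over $t$.
\item It gives the explicit rate $\mathcal{O}(|\epsilon|\log^2(1/|\epsilon|))$.
\item It makes clear that only $n\to\infty$ and $n^2|\epsilon|\to 0$ are used, not the full logarithmic window.
\end{itemize}
This pointwise style is in fact what the paper itself switches to later, when it bounds $b^{(j)}_{n,r}$ via $o(m)\le n$ for higher derivatives.

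For the main term, your use of Lemma \ref{Convergence of Product of Two Functions} with $g_n=X_n^{(1)}\uparrow X^{(1)}$ is equivalent to the paper's appeal to Lemma \ref{Generalized Dominated Convergence Lemma}. You also add two things the paper leaves implicit. First, you verify explicitly that $|X_n(\epsilon)-X_n|\le C|\epsilon|\log^2(1/|\epsilon|)(X-1)\to 0$ almost everywhere. Second, you pass to sequences $\epsilon_k\to 0$ to handle the $\epsilon$-dependence of $n$. Your reading of $h'$ as $\phi'$ in the statement is the intended one.
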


In the following subsections, we first show how to deduce Theorem \ref{Differentiability of Stationary measure, base case} assuming the above two lemmas, which is a simple version of the one we did at the end of \S \ref{Section: Infinite Differentiability for Smooth Compactly Supported Test Function} . Then we turn to the proof of the two lemmas.

We end this subsection with a binomial identity to be used in the proof. 

\begin{lm}\label{Binomial Identity}
    For any $t\le j\in\mathbb{N}$, it holds
    $$\sum_{k=t}^j k(k-1) \cdots(k-t+1)\binom{j}{k} \lambda_1^{k-t}\lambda_2^{j-k}  = j(j-1)\cdots(j-t+1) (\lambda_1+\lambda_2)^{j-t}.$$
\end{lm}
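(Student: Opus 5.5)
The plan is to read the identity as the $t$-th derivative of the binomial expansion of $(x+\lambda_2)^j$ in the variable $x$, evaluated at $x=\lambda_1$. As a backup, and as a check, I would also verify it term by term with the falling-factorial identity $k(k-1)\cdots(k-t+1)\binom{j}{k}=j(j-1)\cdots(j-t+1)\binom{j-t}{k-t}$. Both routes are elementary, and there is no real obstacle; the only care needed is with the range of summation.

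\emph{Step 1 (polynomial identity).} Consider the polynomial
$$P(x):=(x+\lambda_2)^j=\sum_{k=0}^{j}\binom{j}{k}x^k\lambda_2^{j-k}.$$
I would differentiate both sides $t$ times in $x$, which is legitimate since these are polynomials.

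\emph{Step 2 (differentiate the right side).} The $t$-th derivative of $x^k$ is $k(k-1)\cdots(k-t+1)x^{k-t}$ for $k\ge t$. For $k<t$ it vanishes, and this is consistent with the falling factorial $k(k-1)\cdots(k-t+1)$ containing the factor $0$ in that case. Hence the expansion side becomes
$$\sum_{k=t}^{j}k(k-1)\cdots(k-t+1)\binom{j}{k}x^{k-t}\lambda_2^{j-k},$$
so the summation legitimately starts at $k=t$, as in the statement.

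\emph{Step 3 (differentiate the left side).} The chain rule gives $P^{(t)}(x)=j(j-1)\cdots(j-t+1)(x+\lambda_2)^{j-t}$; the hypothesis $t\le j$ ensures the exponent is nonnegative. Equating the two expressions and substituting $x=\lambda_1$ yields exactly the claimed identity. The degenerate case $t=0$ reduces to the binomial theorem, where the empty product is read as $1$.

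For the alternative check, I would reindex by $i=k-t$ and use the falling-factorial identity above. The left side then becomes $j(j-1)\cdots(j-t+1)\sum_{i=0}^{j-t}\binom{j-t}{i}\lambda_1^{i}\lambda_2^{j-t-i}$. By the binomial theorem this equals $j(j-1)\cdots(j-t+1)(\lambda_1+\lambda_2)^{j-t}$, in agreement with Step 3.
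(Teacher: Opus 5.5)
Your proposal is correct and is essentially the paper's proof: the paper likewise expands $(\lambda_1+\lambda_2)^j$ by the binomial theorem and differentiates $t$ times with respect to $\lambda_1$. Your version, with a dummy variable $x$ evaluated at $\lambda_1$, is the same argument written out more carefully. The falling-factorial check via $k(k-1)\cdots(k-t+1)\binom{j}{k}=j(j-1)\cdots(j-t+1)\binom{j-t}{k-t}$ is a valid, purely combinatorial alternative.
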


\begin{proof}
        By the Binomial identity, 
    $\DS (\lambda_1+\lambda_2)^{j}=\sum_{k=0}^j \binom{j}{k} \lambda_1^k\lambda_2^{j-k}.$
    Differentiating both sides $t$ times with respect to $\lambda_1$, we get the desired identity.
\end{proof}

\subsection{Proof of Theorem \ref{Differentiability of Stationary measure}, assuming Lemma \ref{Lemma on Finite Truncate; Base case} and \ref{Lemma on Finite difference; Base case}}

\begin{proof}
    Since in Lemma \ref{Lemma on Finite Truncate; Base case}, the constants $C$ and $\theta$ depend continuously on the parameters $\lambda_1 , \lambda_2$, we may choose a pair of $C,\theta$ such that the lemma  holds for all sufficiently small $\epsilon$.

Next, choose $n = n(\epsilon) = 2\log_{\theta} |\epsilon|$ in Lemma \ref{Lemma on Finite difference; Base case}. Note that 
$\displaystyle\lim_{\epsilon\to 0} \frac{c\theta^n}{\epsilon} = 0$.
Hence 
\begin{align*}
    &\lim_{\epsilon\to 0} \frac{h(\epsilon) - h(0)}{\epsilon}\\
    &= \lim_{\epsilon\to 0} \frac{\int_{\Sigma}\phi(X(\epsilon))\,d\nu- \int_{\Sigma}\phi(X_n(\epsilon))\,d\nu  }{\epsilon}+\lim_{\epsilon\to 0} \frac{\int_{\Sigma}\phi(X_n)\,d\nu- \int_{\Sigma}\phi(X)\,d\nu  }{\epsilon}\\
    &+\lim_{\epsilon\to 0} \frac{\int_{\Sigma}\phi(X_n)\,d\nu- \int_{\Sigma}\phi(X_n(\epsilon))\,d\nu  }{\epsilon}.
\end{align*}

Note that the first and the second limits are equal to $0$, by Lemma \ref{Lemma on Finite Truncate; Base case} and the choice of $n$, and the third limit is equal to $\DS \int_{\Sigma} h'(X)X^{(1)}\,d\nu$, thanks to Lemma \ref{Lemma on Finite difference; Base case}. The theorem is proved.
\end{proof}

\subsection{Proof of Lemma \ref{Lemma on Finite Truncate; Base case}}
\begin{proof}
     Note that
    \begin{align*}
        |\int\phi \,d\mu - \int\phi\,d\mu_n| &=| \int_{\Sigma} \phi(X)-\phi(X_n)\,d\nu|
        \le \int_{\Sigma} ||\phi'||_{C^0} (X-X_n) \,d\nu =\\
        &  ||\phi'||_{C^0} \int_{\Sigma} \sum_{m=n+1}^{\infty}  \lambda_{i_1}\cdots\lambda_{i_m}  \,d\nu 
        = ||\phi'||_{C^0} \sum_{m=n+1}^{\infty} \left(\frac{\lambda_1+\lambda_2}{2}\right)^{m} 
        =  C \theta^n, 
    \end{align*}
 where $C = ||\phi'||_{C^0} \frac{\lambda_1+\lambda_2}{1-\lambda_1-\lambda_2} <\infty$ and $\theta =\frac{\lambda_1+\lambda_2}{2}$. Clearly these two expressions depend continuously on $\lambda_1,\lambda_2$. The lemma is proved.
\end{proof}

\subsection{Proof of Lemma \ref{Lemma on Finite difference; Base case}}
\begin{proof}
    We first compute
\begin{equation}\label{finite difference}
    X_n(\epsilon) - X_n = \sum_{t=1}^{n-1} a_{n,t} \epsilon^t,
\end{equation}
where
\begin{equation}
    a_{n,t} = \frac{1}{\lambda_1^t}\sum_{j=1}^{n} \lambda_{i_1}\cdots \lambda_{i_{j-1}} \binom{o(j-1)}{t} ,\quad 
    t = 1,\dots, n-1, 
\end{equation}
\begin{equation}
     o(k) := \# \{s: 1\le s\le k , i_s = 1\} ,\; k=1,\dots, n-1
\end{equation}
and $\binom{n}{k} := \frac{n!}{k!(n-k)!}$ denotes the binomial coefficients
(we set $\binom{n}{k} =0$ if $k>n$).

By Taylor's theorem, there exists $\xi_n\in[-1,1]$ such that
\begin{align}
&\int_\Sigma  \frac{\phi(X_n(\epsilon)) - \phi(X_n)}{\epsilon} \,d\nu\nonumber\\
 &  = \int \phi'(X_n + \xi_n (X_n'-X_n))a_{n,1} \,d\nu + \int \phi'(X_n + \xi_n (X_n(\epsilon)-X_n)) \sum_{t=2}^{n-1} a_{n,t}\epsilon^{t-1} \,d\nu \nonumber\\
 &:= J_1 + J_2.
\end{align}

We claim that 
\begin{equation}
    \lim_{\epsilon\to 0}J_1 =\int_{\Sigma} h'(X)X^{(1)}\,d\nu \quad\text{and} \quad \lim_{\epsilon\to 0} J_2 = 0,
\end{equation}
from which the lemma follows immediately.

We first deal with $J_2$. Fix any $t\ge 2$, 
$$|\phi'\left(X_n+ \xi_n (X_n(\epsilon) - X_n)\right) a_{n,t}\epsilon^{t-1}|\le M|\epsilon|^{t-1} a_{n,t},$$
and
\begin{align*}
    \int a_{n,t} &= \frac{1}{\lambda_1^t} \sum_{j=t+1}^n\int\lambda_{i_1}\cdots \lambda_{i_{j-1}} \binom{o(j-1)}{t} .
\end{align*}
Moreover, for any $j\in[t+1,n]$,
\begin{align*}
 & \int\lambda_{i_1}\cdots \lambda_{i_{j-1}} \binom{o(j-1)}{t}  = 
 \left(\frac{1}{2}\right)^{j-1}\sum_{k=t}^{j-1} \binom{j-1}{k}\lambda_1^k \lambda_2^{j-1-k} \binom{k}{t}\\
  &= \frac{\lambda_1^t}{2^{j-1}} \frac{k!}{t!(k-t)!k(k-1)\cdots (k-t+1)}  \sum_{k=t}^{j-1} \binom{j-1}{k} k(k-1)\cdots(k-t+1) \lambda_1^{k-t}\lambda_2^{j-1-k}\\
  &= \frac{\lambda_1^t}{2^{j-1}} \frac{k!}{t!(k-t)!k(k-1)\cdots (k-t+1)}  (j-1)\cdots(j-t) (\lambda_1+\lambda_2)^{j-t-1} \;\text{ by Lemma }\ref{Binomial Identity}\\
  &= \frac{\lambda_1^t (\lambda_1+\lambda_2)^{j-t-1}}{2^{j-1}} \binom{j-1}{t}.
\end{align*}
It follows that
\begin{align*}
    M|\epsilon|^{t-1}\int a_{n,t}&= M|\epsilon|^{t-1}\sum_{j=t+1}^{n}  \frac{ (\lambda_1+\lambda_2)^{j-t-1}}{2^{j-1}} \binom{j-1}{t}
    = M|\epsilon|^{t-1}\sum_{j=0}^{n-t-1}  \frac{ (\lambda_1+\lambda_2)^{j}}{2^{j+t}} \binom{j+t}{t}\\
    &\le M\sum_{j=0}^{2\log_\theta |\epsilon|} (\frac{\lambda_1+\lambda_2}{2})^{j} (|\epsilon|^{\frac{t-1}{t}} (j+t))^t  
    \quad \text{ since }\binom{j+t}{t}\le (j+t)^t\\
    &\le C|\epsilon|^{\frac{1}{3}t}, \quad \text{ if } |\epsilon| \text{ is small enough,}
\end{align*}
where $C$ is some constant independent of $t$ and $\epsilon$. Note that in the last inequality, we have used the fact that $j,t$ are at most of order $-\log |\epsilon|$, $\DS \frac{\lambda_1+\lambda_2}{2}<1$ and $\DS \frac{t-1}{t}\ge \frac{1}{2}$. 
Therefore
\begin{equation}
    |J_2| \le CM\sum_{t=2}^{n-1} |\epsilon|^{\frac{t}{3}} \to 0 \text{ as } \epsilon\to 0.
\end{equation}

Next we consider $J_1$. Observe that
$$\phi'(X_n+\xi_n (X_n(\epsilon) - X_n))a_{n,1} = \phi'(X_n+\xi_n (X_n(\epsilon) - X_n))\frac{1}{\lambda_1}\sum_{j=1}^{n-1} \lambda_{i_1}\cdots\lambda_{i_{j-1}} o(j-1)$$
converges a.e. to 
$$\phi'(X)X^{(1)}$$
as $n\to \infty$ (or equivalently, $\epsilon\to 0$). Moreover, 
$$|\phi'(X_n+\xi_n (X_n(\epsilon) - X_n))\frac{1}{\lambda_1}\sum_{j=1}^{n-1} \lambda_{i_1}\cdots\lambda_{i_{j-1}} o(j-1)|\le \frac{M}{\lambda_1}\sum_{j=1}^{n-1} \lambda_{i_1}\cdots\lambda_{i_{j-1}} o(j-1)$$
and the right hand side is convergent both a.e. and in $L^1$, by Lemma \ref{Binomial Identity} and the Monotone convergence theorem. Therefore, by Lemma \ref{Generalized Dominated Convergence Lemma}, 
$$\phi'(X_n+\xi_n (X_n(\epsilon) - X_n))\frac{1}{\lambda_1}\sum_{j=1}^{n-1} \lambda_{i_1}\cdots\lambda_{i_{j-1}} o(j-1)$$
converges to
$$\phi'(X)X^{(1)}$$
also in $L^1$. This is the desired claim for $J_1$.
\end{proof}

%% file: Smoothness_of_Moments.tex
\section{Smoothness  of  moments}\label{Section: Smoothness of Moments}
\subsection{Main result and sketch of proof.}
The main objective of this section is to prove Theorem \ref{Smoothness for Finite Moment Condition} 
which we now recall.
\vskip2mm

\noindent {\bf Theorem \ref{Smoothness for Finite Moment Condition}}.
  {\em   Suppose $d_1=1,d_2=1$, $ 0<\lambda_1 <1<\lambda_2$ and $t>0$ are such that $\frac{\lambda_1^t+ \lambda_2^t}{2}<1$. Note that this implies $\lambda_1\lambda_2 <1$ and the stationary measure $\mu = \mu_{\lambda_1,\lambda_2}$ is well defined. Take the test function to be $\phi(x) = x^t$ and define $h(\epsilon) = \int \phi(x) \,d\mu_{\lambda_1 + \epsilon , \lambda_2}$ for $\epsilon$ small. Then $h$ is infinitely differentiable at $0$ and the derivative coincide with the one obtained by taking formal derivative:
    \begin{align*}
        h^{(l)}(0) &= \int_{\Sigma} \frac{\,d^l}{\,d \epsilon^l} (\phi\circ X(\epsilon))|_{\epsilon=0}\,d\nu \\
        &= \int_{\Sigma} \sum_{k_1 + 2k_2 + \cdots lk_l = l} L(k_1, \cdots, k_l) \phi^{(k)} (X) \prod_{j=1}^{l} (X^{(j)})^{k_j}\,d\nu,\quad \forall l\in \mathbb{N}.
    \end{align*}}

The proof will be induction on $l$, and the inductive step will follow the same idea as in Theorem \ref{infinite differentiability for compactly supported test function}, studying the finite truncate of the integral. In particular, we again use two lemmas that are virtually identical to Lemma \ref{finite approximation} and 
Lemma \ref{second step:finite difference converges to derivative}.

\begin{lm}\label{moment condition: finite truncate lemma}
    
  There exists $c>0 $ and $0<\theta<1$ that depends on $\lambda_1,\lambda_2$, $t$ and $l$ such that 
    \begin{align*}
        |&\int \sum_{k_1 + 2k_2 + \cdots lk_l = l} L(k_1, \cdots, k_l) \phi^{(k)} (X) \prod_{j=1}^{l} (X^{(j)})^{k_j} \\
        &-\int \sum_{k_1 + 2k_2 + \cdots lk_l = l} L(k_1, \cdots, k_l) \phi^{(k)} (X_n) \prod_{j=1}^{l} (X^{(j)}_n)^{k_j}| < c\theta ^n.
    \end{align*}

    Here, $X_n$ and $X_n^{(j)}$'s are finite truncates of $X,X^{(j)}$'s at $n$. That is,
    $$X_n =  \sum_{m=1}^n \lambda_{i_1 }\cdots \lambda_{i_m},\quad
    X_n^{(j)} = \frac{j!}{\lambda_1^j}\sum_{m=1}^{n} \lambda_{i_1} \cdots \lambda_{i_m} \binom{o(m)}{j} .$$
\end{lm}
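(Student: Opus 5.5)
Here $\phi(x)=x^t$ and $X\ge 1$, so $\phi^{(k)}(x)=t(t-1)\cdots(t-k+1)x^{t-k}$. The plan is to reduce to a single summand and split it as in Lemma \ref{finite approximation}, with the H\"older/moment machinery replacing the compact-support cutoff. The sum over $k_1+2k_2+\cdots+lk_l=l$ has boundedly many terms with bounded coefficients $L(k_1,\dots,k_l)$. It therefore suffices to bound
\[
\Bigl|\int X^{t-k}\prod_{j}(X^{(j)})^{k_j}-\int X_n^{t-k}\prod_j (X_n^{(j)})^{k_j}\Bigr| \le J_1+J_2,
\]
where $J_1$ carries the difference of the products and $J_2$ the difference $X^{t-k}-X_n^{t-k}$, weighted by $\prod_j (X_n^{(j)})^{k_j}$.

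For $J_1$, expand the product exactly as before into terms $X^{t-k}\prod_j (X_n^{(j)})^{k_j-s_j}(R_n^{(j)})^{s_j}$ with $s=\sum s_j\ge1$. The key pointwise observation is that $X^{(j)}\le \frac{j!}{\lambda_1^j}\sum_m \Lambda_m m^j$, while $X=\sum_m\Lambda_m$ with $\Lambda_m=\lambda_{i_1}\cdots\lambda_{i_m}$. Using $k=\sum k_j$ factors $X^{(j)}$ against $X^{t-k}$, I would bound each term by a polynomially weighted expression of the form
\[
\Bigl(\sum_m m^{l}\Lambda_m\Bigr)^{t}\cdot\Bigl(\text{tail ratio}\Bigr),
\]
or, more concretely, apply generalized H\"older to reduce to estimating $\mathbb{E}\bigl[(\sum_{m>n} m^{l}\Lambda_m)^{t}\bigr]$ and $\mathbb{E}\bigl[(\sum_{m} m^{l}\Lambda_m)^{t}\bigr]$.

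The tail expectation is controlled as follows. For $t\le 1$, subadditivity gives
\[
\mathbb{E}\Bigl(\sum_{m>n} m^l\Lambda_m\Bigr)^t\le \sum_{m>n} m^{lt}\Bigl(\tfrac{\lambda_1^t+\lambda_2^t}{2}\Bigr)^m,
\]
which is exponentially small because $f(t):=\frac{\lambda_1^t+\lambda_2^t}{2}<1$. For $t>1$, use Minkowski in $L^t$: $\|\sum_{m>n} m^l\Lambda_m\|_t\le\sum_{m>n} m^l f(t)^{m/t}$, again exponentially small.

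Since $t-k$ may be negative, the factor $X^{t-k}$ needs care. I would use $X\ge 1$ together with $X\ge \Lambda_m$ for each $m$ to absorb negative powers, so that the total homogeneity in the $\Lambda$'s is exactly $t$. The resulting mixed terms are then handled by H\"older with exponents matched to $t$.

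For $J_2$, use the mean value theorem: $|X^{t-k}-X_n^{t-k}|\le C\,(X-X_n)\max(X,X_n)^{t-k-1}$, where $X-X_n=\Lambda_n Y$ and $Y\overset{d}{=}X-1$ is independent of $\Lambda_n,X_n$. Homogeneity again reduces this to $\mathbb{E}[\Lambda_n^{a}\cdots]$-type terms of total degree $t$ with at least one tail factor, which the same Minkowski/subadditivity bound makes $O(n^{C}f(t)^{n/\max(1,t)})$. Choosing $\theta$ slightly larger than $f(t)^{1/\max(1,t)}$ absorbs the polynomial factors $n^{C}$ and gives the claimed $c\theta^n$.

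The main obstacle is the homogeneity bookkeeping. We must show that every summand, after the binomial expansion, is dominated by a sum of products of series in $\Lambda_m$ of total degree at most $t$, with polynomial weights in $m$. The $k$ derivative factors each contribute degree one, which must be offset by the $X^{t-k}$ factor. I would first try the pointwise inequality $\prod_j (X^{(j)})^{k_j}\le C\bigl(\sum_m m^l\Lambda_m\bigr)^{k}$ and $X^{t-k}\le \bigl(\sum_m m^l\Lambda_m\bigr)^{t-k}$ when $t\ge k$. When $t<k$, I would use $X^{t-k}\le X_n^{t-k}$ together with the fact that each $\prod X^{(j)}$ contains factors bounded by $X\cdot(\text{polynomial in } o(m))$ along the dominant terms. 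If that fails, I would fall back to the truncation-plus-large-deviation argument of Lemma \ref{finite approximation}, using the tail bound of Lemma \ref{Lemma decay of tail} to handle the set where $X$ is large.
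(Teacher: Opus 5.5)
\textbf{Verdict.} Your $k\le t$ terms are fine, but for $k>t$ the proposal has a genuine gap: the $J_2$ inequality is false there, and your concrete $J_1$ step fails.

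\textbf{The case $k\le t$.} Here your argument is correct and leaner than the paper's. Set $H=\sum_{m\le n}m^l\Lambda_m$, $T=\sum_{m>n}m^l\Lambda_m$ and bound $X^{t-k}\le(1+H+T)^{t-k}$. H\"older with exponents $t/(t-s)$, $t/s$ and Minkowski in $L^t$ then give $O(n^{ls}f(t)^{sn/t})$ at once. This avoids the paper's factorization $\sum_{m>n}\Lambda_mK_l(m)\preceq A_nB_n$ via Lemma~\ref{Moment Condtion: Lemma for Binomial Difference}, independence, and the $\rho$-weighted H\"older trick. The regime $k>t$ is unavoidable, since $l$ is arbitrary while $t$ is fixed. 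There only moments of $\Lambda_m$ of order at most $t$ decay, so $\mathbb{E}X=\infty$ is allowed when $t<1$.

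\textbf{The gap in $J_2$.} You use $|X^{t-k}-X_n^{t-k}|\le C(X-X_n)\max(X,X_n)^{t-k-1}$. This holds for $t\ge k$ but is false for $t<k$: take $X_n=1$ and $X\to\infty$; the left side tends to $1$ and the right side to $0$.
\begin{itemize}
\item The mean value theorem only gives $X_n^{t-k-1}(X-X_n)$, and that bound has infinite expectation when $f(1)\ge1$.
\item The reason is that $X_n$ absorbs the prefactor $\Lambda_n$ of $X-X_n=\Lambda_nY$ but not $Y$, which is independent with $\mathbb{E}Y=\infty$.
\item What is missing is the mixed inequality $X_n^{a}-X^{a}\le\max(|a|,1)\,X_n^{a}(X-X_n)/X$ for $a<0$ (Lemma~\ref{Moment Condition: Lemma on Diference of a-Power}).
\item You then need to split the negative homogeneity. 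With $\beta=t/(k+1)$, write $X_n^{t-k}X^{-1}=X_n^{-(1-\beta)k}\,X^{-(1-\beta)}\,(X_n/X)^{\beta}$. Then $X_n$ absorbs the $k$ head factors ($X_n\ge\Lambda_m$ for $m\le n$), and $X$ absorbs the tail ($X\ge\Lambda_m$ for all $m$).
\item What remains is $\bigl(\sum_{m\le n}m^l\Lambda_m^\beta\bigr)^k\sum_{m>n}\Lambda_m^\beta$. Your H\"older--Minkowski estimate in $L^{k+1}$ handles it, using $\mathbb{E}\Lambda_m^{\beta(k+1)}=f(t)^m$.
\end{itemize}

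\textbf{The gap in $J_1$.} Your concrete step for $k>t$, $X^{t-k}\le X_n^{t-k}$, discards the only quantity that can tame the tail factors $R_n^{(j)}$. For example, with $t=\tfrac12$, $k=s=l=1$ and $f(\tfrac12)<1\le f(1)$, one gets $\mathbb{E}[X_n^{-1/2}R_n^{(1)}]=\infty$. You have to keep $X$ and apply your own homogeneity principle to it. With $\beta=t/k$, use $\Lambda_m/X^{1-\beta}\le\Lambda_m^{\beta}$ for every $m$. This turns the term into $\bigl(\sum_{m\le n}m^l\Lambda_m^\beta\bigr)^{k-s}\bigl(\sum_{m>n}m^l\Lambda_m^\beta\bigr)^{s}$ with $\mathbb{E}\Lambda_m^{\beta k}=f(t)^m$, and H\"older in $L^k$ closes it. This is the paper's substitution $\tilde\lambda_i=\lambda_i^{t/k}$.

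\textbf{The fallback.} Truncation plus Lemma~\ref{Lemma decay of tail} does not obviously apply here. That argument relies on the compact support of $\phi$ to restrict to $\{X\le M\}$, and $\phi(x)=x^t$ has no compact support.
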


\begin{lm}\label{Moment Condition: Finite Difference Converges to Derivative}

    For any $\epsilon\neq0$, let $n = n(\epsilon)\in\mathbb{N} $ be such that there exists some constant d satisfying $\frac{1}{d} \log\frac{1}{|\epsilon|} \le n \le d \log\frac{1}{|\epsilon|},\forall \epsilon\neq0$. Then 
    \begin{align*}
        \lim_{\epsilon\to 0} \frac{1}{\epsilon} &\left(\int \sum_{k_1 + 2k_2 + \cdots lk_l = l} L(k_1, \cdots, k_l) \phi^{(k)} (X_n(\epsilon)) \prod_{j=1}^{l} (X^{(j)}_n (\epsilon))^{k_j}\right. \\
        &-\left.\int \sum_{k_1 + 2k_2 + \cdots lk_l = l} L(k_1, \cdots, k_l) 
        \phi^{(k)} (X_n) \prod_{j=1}^{l} (X^{(j)}_n)^{k_j}\right) \\
        &=\int \sum_{k_1 + 2k_2 + \cdots (l+1)k_l = l+1} L(k_1, \cdots, k_{l+1}) \phi^{(k)} (X) \prod_{j=1}^{l+1} (X^{(j)})^{k_j}.
    \end{align*}
    Here $\displaystyle X_n (\epsilon):= \sum_{m=1}^n \lambda_{i_1}(\epsilon) \cdots \lambda_{i_m}(\epsilon)$ and 
    $$X_n^{(j)} (\epsilon) := \frac{\,d^j}{\,d \epsilon^j}X_n (\epsilon) =  \frac{j!}{\lambda_1^j}\sum_{m=1}^{n} \lambda_{i_1}(\epsilon) \cdots \lambda_{i_m}(\epsilon) \binom{o(m)}{j}.$$
 \end{lm}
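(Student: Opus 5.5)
The plan is to copy the proof of Lemma \ref{second step:finite difference converges to derivative}, replacing the compact-support truncation $1_{\{X\le M\}}$ by moment bounds coming from $f(t)=\frac{\lambda_1^t+\lambda_2^t}{2}<1$.

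\textbf{Reduction to a single summand.} Since $l$ is fixed, the sum over $k_1+2k_2+\cdots+lk_l=l$ is finite with bounded coefficients. So it suffices to prove \eqref{finite difference;intermediate step; 1} for each summand, with $\phi^{(k)}(x)=t(t-1)\cdots(t-k+1)x^{t-k}$. Because $d_1=d_2=1$ we have $X_n,X_n(\epsilon),X\ge 1$, so $\phi^{(k)}$ is smooth and all its derivatives are bounded by a polynomial in $x$ on $[1,\infty)$. As before, I split the difference into $G_1$ (where only the argument of $\phi^{(k)}$ varies) and $G_2$ (where only the product $\prod_j (X_n^{(j)}(\epsilon))^{k_j}$ varies). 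I expand $X_n(\epsilon)-X_n=\epsilon X_n^{(1)}+\sum_{t'\ge 2}S_{n,t'}\epsilon^{t'}$ and $X_n^{(j)}(\epsilon)=X_n^{(j)}+\epsilon X_n^{(j+1)}+P_n^{(j)}$ exactly as there. I then identify the pointwise limits: $\phi^{(k+1)}(X_n+\xi_n(X_n(\epsilon)-X_n))\to\phi^{(k+1)}(X)$, and the combinatorial identity that turns the first-order term of $T_n^{(j)}$ into $X_n^{(j+1)}$ carries over unchanged.

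\textbf{Replacing boundedness by domination.} The only new input needed is an integrable dominating function that converges in $L^1$, so that Lemma \ref{Generalized Dominated Convergence Lemma} applies; Lemma \ref{Convergence of Product of Two Functions} is no longer available because $\phi^{(k+1)}$ is unbounded.
\begin{enumerate}[(i)]
\item Since $o(m)\le m$ and $|\epsilon|\le |\epsilon_0|$, we have $(1+\epsilon/\lambda_1)^{o(m)}\le e^{m|\epsilon|/\lambda_1}\le 2$ for $m\le n\le d\log\frac1{|\epsilon|}$ and $\epsilon$ small. Hence $X_n(\epsilon)\le 2X$ and $X_n^{(j)}(\epsilon)\le 2 Z_j$, where $Z_j:=C\sum_m m^j\Lambda_m$ and $\Lambda_m=\lambda_{i_1}\cdots\lambda_{i_m}$. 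The higher-order terms contribute at most $|\epsilon|\,(\log\frac1{|\epsilon|})^{L}$ times such quantities, so they vanish once domination is in place.
\item Every integrand is therefore bounded by $D\,X^{a}\prod_j Z_j^{k_j'}$, with $a=\max(t-k-1,0)$ or $\max(t-k,0)$ and $\sum_j k_j'\le k+1$. It thus suffices to show that $\mathbb{E}[X^{t-k'}\prod_j Z_j^{k_j'}]<\infty$ whenever $k'=\sum_j k_j'$, and that the truncated versions converge monotonically.
\item Write $X^{t-k'}\prod_j Z_j^{k_j'}=X^{t}\prod_j R_j^{k_j'}$ with $R_j:=Z_j/X$. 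Since $X\ge\sum_m\Lambda_m$, each $R_j\le C\,\frac{\sum_m m^j\Lambda_m}{\sum_m\Lambda_m}$ is a weighted average of $Cm^j$. Pick $t'>t$ with $f(t')<1$, which is possible because the condition is open. By Minkowski's inequality ($t'\ge 1$) or subadditivity of $x^{t'}$ ($t'<1$), $\mathbb{E}[X^{t'}]\le\big(\sum_m f(t')^{m/t'}\big)^{\max(t',1)}<\infty$. Hölder's inequality then reduces everything to showing $R_j\in L^q$ for every $q<\infty$.
\end{enumerate}

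\textbf{Main obstacle.} The hard step is the bound $R_j\in L^q$. It holds when $t\ge k'$, since then one can instead apply Hölder directly with each $Z_j\in L^{t'}$. When $t<k'$, however, powers of $X^{(j)}$ higher than the available moment genuinely appear, and the bound is needed. The first thing I would try is this: $R_j>u$ forces the weight $\Lambda_m/X$ to put mass on indices $m\gtrsim u^{1/j}$. This requires $\Lambda_m\ge\frac{c}{m^2}$ for some $m\ge u^{1/j}$, where we use $X\ge 1$. By the large-deviation estimate already used in the proof of Lemma \ref{finite approximation} (with $\mathbb{E}\log\lambda_{i}=\frac12\log\lambda_1\lambda_2<0$), this event has probability at most $\sum_{m\ge u^{1/j}}e^{-\eta m}$. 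That gives a stretched-exponential tail for $R_j$, hence all moments. The truncated versions increase in $n$ up to a constant factor, so the monotone convergence theorem together with Lemma \ref{Generalized Dominated Convergence Lemma} yields the $L^1$ convergence and completes the identification of $\lim G_1/\epsilon$ and $\lim G_2/\epsilon$.
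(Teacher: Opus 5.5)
Your architecture matches the paper's: one summand at a time, the $G_1/G_2$ split, killing the $\epsilon^{\ge 2}$ terms with a factor $|\epsilon|\,\mathrm{poly}(\log\frac{1}{|\epsilon|})$, and generalized dominated convergence. You diverge in how you dominate the integrand when $t<k'$, and there step (ii), as written, does not give the reduction you then use.

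The negative power sits on $X_n(\epsilon)$, or on the Taylor point between $X_n$ and $X_n(\epsilon)$, not on $X$. Bounding it by $X^{\max(t-k,0)}=1$ leaves $\prod_j Z_j^{k_j'}\ge c\,\Lambda_m^{k'}$. Its expectation is at least $c f(k')^m$ for every $m$, and $f(k')>1$ is allowed, so it is infinite in general. Bounding it by $X^{t-k'}$ instead goes the wrong way, since $X_n\le X$.

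Your reduction to $\mathbb{E}[X^{t-k'}\prod_j Z_j^{k_j'}]<\infty$ is nonetheless true, for a reason you should state:
\begin{enumerate}[(a)]
\item Add the lower bound $X_n(\epsilon)\ge\frac12 X_n$ to (i); it follows from the same argument.
\item Observe that $Z_{j,n}/X_n$ is a $\Lambda_m$-weighted average of $Cm^j$ over $m\le n$. It is therefore nondecreasing in $n$, because each new index carries the largest value.
\end{enumerate}
Hence $X_n^{t-k'}\prod_j Z_{j,n}^{k_j'}=X_n^{t}\prod_j (Z_{j,n}/X_n)^{k_j'}$ is nondecreasing in $n$ and bounded by its limit $X^{t-k'}\prod_j Z_j^{k_j'}$. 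This is exactly what your appeal to monotone convergence needs. A smaller point: because of the weight $m^j$, the event in your tail argument is $\Lambda_m\ge c\,m^{-(j+2)}$, not $\Lambda_m\ge c/m^2$. Any polynomial threshold works in the large-deviation bound.

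With this repair your route is valid and genuinely different from the paper's. The paper uses only deterministic inequalities and $\mathbb{E}\Lambda_m^s=f(s)^m$:
\begin{enumerate}[(a)]
\item For the first-order terms with $t<k$, it sets $\beta=t/k$ and uses $\Lambda_m\le X_n$ to get $\Lambda_m/X_n^{1-\beta}\le\Lambda_m^{\beta}$. The integrand is then dominated by $(\sum_{m\le n}\Lambda_m^{\beta}K_l(m))^k$, which increases in $n$ to an integrable limit exactly because $f(\beta k)=f(t)<1$.
\item For the higher-order terms, it bounds every $\Lambda_m$ by $X_n$. This reduces them to $X_n^t\,\mathrm{poly}(n)$ times powers of $\epsilon$, which vanish.
\end{enumerate}
You instead prove a structural fact independent of $t$: $X^{(j)}/X$ has stretched-exponential tails, via large deviations for $\log\Lambda_m$. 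You then pay for H\"older with a slightly higher moment $X\in L^{t'}$, $t'>t$, which exists because $\{f<1\}$ is open.

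The paper's trick is shorter and needs neither an extra moment nor a large-deviation input. Yours shows that the moment hypothesis enters only through $\mathbb{E}X^{t+\delta}<\infty$, and it would accommodate any polynomially growing combinatorial weights.
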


 Repetition of the proof  in \S \ref{Smoothness for Compactly Supported Function: Final Proof} implies that Theorem \ref{Smoothness for Finite Moment Condition} holds if the above two lemmas are true. 
The plan for the rest of this section is the following. First we 
 prepare some auxiliary lemmas in \S \ref{Subsection: Smoothness via Moment condition; Lemmas}. 
 Then we prove  Lemma \ref{moment condition: finite truncate lemma} and \ref{Moment Condition: Finite Difference Converges to Derivative} in \S \ref{Subsection: smoothness of moment condition; proof of truncate lemma} and \S \ref{Subsection: smoothness of moment condition; proof of convergence lemma} respectively. Once Theorem \ref{Smoothness for Finite Moment Condition} is established, we proceed to finish Theorem \ref{Differentiability of Stationary measure} in \S \ref{Subsection: smoothness of moment condition; proof of induction step} by exploiting several estimates in its proof.

 \subsection{Auxiliary estimates} \label{Subsection: Smoothness via Moment condition; Lemmas}
 We begin with several 
preliminary results.

\begin{lm}\label{Moment Condition: Power of Sum is less than Sum of Power}
    For any $0\le\alpha\le1$, $n\in \mathbb{N}$ and $a_1,\cdots a_n \ge0$, 
    $$(a_1 + \cdots a_n)^\alpha \le a_1^\alpha + \cdots +a_n^{\alpha}. $$
\end{lm}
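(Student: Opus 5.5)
The plan is to reduce to the case of two summands and then induct on $n$. If $\alpha=0$ the inequality is $1\le n$ (with the convention $0^0=1$), which holds trivially, and if $\alpha=1$ it is an equality. So I will assume $0<\alpha<1$. Moreover, once the case $n=2$ is known, the general case follows by induction:
\[
(a_1+\cdots+a_n)^\alpha=\bigl((a_1+\cdots+a_{n-1})+a_n\bigr)^\alpha\le (a_1+\cdots+a_{n-1})^\alpha+a_n^\alpha ,
\]
and the induction hypothesis bounds the first term by $a_1^\alpha+\cdots+a_{n-1}^\alpha$.

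For $n=2$, let $a,b\ge 0$. If $a+b=0$ there is nothing to prove. Otherwise I will normalize by setting $x=\frac{a}{a+b}$ and $y=\frac{b}{a+b}$, so that $x,y\in[0,1]$ and $x+y=1$. Since $0<\alpha<1$, we have $x^\alpha\ge x$ and $y^\alpha\ge y$ on $[0,1]$, which gives $x^\alpha+y^\alpha\ge x+y=1$. Multiplying through by $(a+b)^\alpha$ then yields $(a+b)^\alpha\le a^\alpha+b^\alpha$.

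An alternative route for the two-term case is to consider $g(a)=a^\alpha+b^\alpha-(a+b)^\alpha$. Then $g(0)=0$, and $g'(a)=\alpha\bigl(a^{\alpha-1}-(a+b)^{\alpha-1}\bigr)\ge 0$ because $t\mapsto t^{\alpha-1}$ is decreasing, so $g\ge 0$. Either argument suffices. I do not expect any genuine obstacle; the only points requiring care are the degenerate cases $\alpha=0$ and $a+b=0$.
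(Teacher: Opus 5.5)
Your proof is correct and is essentially the paper's argument: the paper also divides by the total sum $a_1+\cdots+a_n$ and uses $x^\alpha\ge x$ for $x\in[0,1]$. The paper applies this normalization to all $n$ terms at once, so your induction down to $n=2$ and the alternative derivative argument are valid but not needed.
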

\begin{proof}
    If $\displaystyle\sum_{i=1}^n a_i = 0$ then the inequality is trivially true. Otherwise, the lemma is equivalent to
    $$1\le \sum_{i=1}^n \left(\frac{a_i}{a_1+\cdots +a_n}\right)^\alpha.$$
    Note that $(\frac{a_i}{a_1+\cdots +a_n})^\alpha\ge \frac{a_i}{a_1+\cdots +a_n},i=1,\cdots,n$, since for each $i$, $\frac{a_i}{a_1+\cdots +a_n} \le 1$ and $\alpha\in [0,1]$. This immediately implies the lemma.
\end{proof}

\begin{lm}\label{Moment Condition: Lemma on Diference of a-Power}
    For any $a\in \mathbb{R}$ and any $X> Y>0$, we have
    \begin{equation}
        X^a-Y^a \le \max\{a,1\} X^{a-1} (X-Y), \text{ if }a\ge 0
    \end{equation}
and
\begin{equation}
    Y^a-X^a \le \max\{|a|,1\} (X-Y) Y^a X^{-1},\text{ if }a <0. 
\end{equation}

\end{lm}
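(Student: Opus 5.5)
Both inequalities follow by factoring out a power and reducing to an elementary one-variable estimate for $u:=Y/X\in(0,1)$. The only tools are the comparison $u^b\ge u$ for $b\in[0,1]$ (as in the proof of Lemma \ref{Moment Condition: Power of Sum is less than Sum of Power}) and the mean value theorem. I do not expect a genuine obstacle. The one point needing care is splitting each case according to whether the exponent is at most $1$ or larger, since that is exactly where $\max\{\cdot,1\}$ arises.

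\emph{Case $a\ge 0$.} Write $X^a-Y^a = X^a\bigl(1-u^a\bigr)$ and note that $X^{a-1}(X-Y)=X^a(1-u)$. So it suffices to show $1-u^a\le \max\{a,1\}(1-u)$.
\begin{itemize}
\item If $0\le a\le 1$: since $0<u<1$ we have $u^a\ge u$, hence $1-u^a\le 1-u$.
\item If $a>1$: by the mean value theorem, $1-u^a = a\eta^{a-1}(1-u)$ for some $\eta\in(u,1)$. Since $\eta^{a-1}\le 1$, this gives $1-u^a\le a(1-u)$.
\end{itemize}
Multiplying back by $X^a>0$ yields the first inequality.

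\emph{Case $a<0$.} Put $b=|a|>0$. Then $Y^a-X^a = Y^a\bigl(1-(Y/X)^{b}\bigr)=Y^a(1-u^b)$, because $X^a = Y^a (Y/X)^{-a}=Y^a u^{b}$. On the other side, $(X-Y)Y^aX^{-1}=Y^a(1-u)$. So again it suffices to show $1-u^b\le\max\{b,1\}(1-u)$. This is exactly the same one-variable estimate as in the first case, with $b$ in place of $a$: use $u^b\ge u$ when $b\le1$, and the mean value theorem with $\eta^{b-1}\le1$ when $b>1$. Multiplying by $Y^a>0$ completes the proof.
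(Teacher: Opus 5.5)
Your proposal is correct and follows essentially the paper's proof: the same split at exponent $1$, the comparison $u^a\ge u$ (the paper writes it as $Y^a\ge X^{a-1}Y$) for $0\le a\le1$, and the mean value theorem for $a>1$. The differences are cosmetic: you normalize by $u=Y/X$, and for $a<0$ you factor out $Y^a$ to reach the same one-variable inequality, while the paper applies the $a\ge0$ case to the pair $Y^{-1}>X^{-1}$ with exponent $-a$.
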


\begin{proof}
    If $0\le a\le 1$, then $a-1\le0$ and thus $ -Y^a\le -X^{a-1}Y $. It follows that
    $$X^a-Y^a\le X^a - X^{a-1}Y =   X^{a-1} (X-Y)$$
    as desired.

    If $a>1$, then by Taylor's theorem, there exists $\xi\in (Y,X)$ such that
    $$X^a-Y^a = a\xi^{a-1} (X-Y) \le aX^{a-1}(X-Y) $$
    and the first inequality is proved.

    If $a<0$, apply the lemma in the case of $a\ge 0$ to get 
   
   $$
        Y^{a} - X^{a} = (Y^{-1})^{-a} - (X^{-1})^{-a}
        \le \max\{|-a|,1\} (Y^{-1})^{-a-1} (Y^{-1} - X^{-1})
   $$  \hskip75mm   $= \max\{|a|,1\} (X-Y) Y^a X^{-1}.
    $
\end{proof}

 \begin{lm}\label{Moment Condtion: Lemma for Binomial Difference}
     Let $a,b,j$ be any non-negative integers. Then
     $$\binom{a}{j} \le 3^j \left(\binom{a-b}{j}+1\right)(b^j+1).$$
     \end{lm}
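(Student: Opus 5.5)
The plan is to reduce to $a\ge b$, apply Vandermonde's identity, and bound each term according to the size of $c:=a-b$ relative to $j$.

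\textbf{Trivial cases.} If $j=0$ both sides are at least $1$, so there is nothing to prove. If $a<b$ (so $\binom{a-b}{j}$ is interpreted as $0$), then
$$\binom{a}{j}\le \frac{a^j}{j!}\le b^j\le 3^j(b^j+1).$$
So I assume $j\ge1$ and $a\ge b$, and write $c=a-b\ge0$. The basic tool is Vandermonde's identity
$$\binom{a}{j}=\binom{c+b}{j}=\sum_{i=0}^{j}\binom{c}{j-i}\binom{b}{i}.$$
This expresses the left-hand side as a sum of $j+1$ products. I will bound the factor $\binom{b}{i}$ and the factor $\binom{c}{j-i}$ separately.

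\textbf{Bounding the $b$-factors.} Since $\binom{b}{i}\le b^i/i!$, one has
$$\sum_{i=0}^{j}\binom{b}{i}\le\sum_{i=0}^j\frac{b^i}{i!}\le (b^j+1)\sum_{i\ge0}\frac1{i!}=e\,(b^j+1),$$
using $b^i\le b^j+1$ for $0\le i\le j$. I will keep the factor $1/i!$, because it is needed to stay within the constant $3^j$ in the harder case below.

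\textbf{Case $c\ge 2j$.} Here $k\mapsto\binom{c}{k}$ is nondecreasing on $0\le k\le j\le c/2$. Hence $\binom{c}{j-i}\le\binom{c}{j}$ for every $i$, and
$$\binom{a}{j}\le e\binom{c}{j}(b^j+1)\le 3^j\Big(\binom{c}{j}+1\Big)(b^j+1),$$
since $e<3\le 3^j$.

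\textbf{Case $c<2j$ (the main obstacle).} Here $\binom{c}{j}$ may be $0$ (when $c<j$) or much smaller than $\binom{c}{j-i}$, so monotonicity is lost and the "$+1$" must absorb the discrepancy. The cheap bound $\binom{c}{j-i}\le 2^c<4^j$ is too weak against $3^j$. My first attempt is the sharper estimate
$$\binom{c}{j-i}\le\frac{c^{j-i}}{(j-i)!}.$$
Combining it with $b^i/i!$ gives
$$\binom{a}{j}\le\sum_i\frac{c^{j-i}b^i}{(j-i)!\,i!}=\frac{(c+b)^j}{j!}.$$
I then need to trade the $c$-powers against $\binom{c}{j}+1$. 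The tool is the ratio bound
$$\frac{\binom{c}{j-i}}{\binom{c}{j}}=\prod_{m=0}^{i-1}\frac{j-m}{c-j+1+m}\qquad (j\le c<2j).$$
The $i$-th term is then at most $\binom{c}{j}\,\frac{j^i}{i!}\,\frac{b^i}{i!}$, and I would try to show that $\sum_i j^ib^i/(i!)^2\le 3^j(b^j+1)$ using $j^i/i!\le e^j$... The constant $e^j$ is slightly too large, so I expect to need the sub-split $i\le j/2$ versus $i>j/2$. For $i>j/2$, the factor $\binom{c}{j-i}\le 2^{c}$ can be paid by the $i!$ in the denominator together with $b^i\le b^j+1$.

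For $c<j$, the term $\binom{c}{j}$ vanishes, and I will bound $\binom{a}{j}\le\binom{j+b}{j}$ directly. The two regimes are: if $b\ge j$, then $\binom{j+b}{j}\le (2b)^j/j!\le 2^j b^j$; if $b<j$, then $\binom{j+b}{j}\le 2^{2j}$... To reach $3^j$ in the second regime, I would use $\binom{j+b}{b}\le (j+b)^b/b!\le (2j)^b/b!$, and then compare with $3^j$ via Stirling. Getting the constant exactly $3$ in this intermediate range is where I expect the real bookkeeping. Since only the form $C^j$ matters downstream, a fallback is to prove the inequality with some $C^j$ and check that the later applications tolerate it.
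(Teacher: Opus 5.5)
\textbf{Gap: the case $c<2j$ is never proved.} Your trivial cases ($j=0$ and $a<b$) and the case $c\ge 2j$ are correct. For $c<2j$, however, you only list candidate estimates:
\begin{itemize}
\item one of them, based on $j^i/i!\le e^j$, you yourself say loses too much;
\item the sub-split $i\le j/2$ versus $i>j/2$ and the Stirling comparison are announced but never carried out;
\item your fallback replaces $3^j$ by an unspecified $C^j$, which is a different statement.
\end{itemize}
As written, the proposal does not establish the lemma in the regimes $j\le c<2j$ and $c<j$.

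\textbf{The missing idea is a one-line product bound.} For every $c\ge j$ (not only $c\ge 2j$),
$$\binom{c+b}{j}=\binom{c}{j}\prod_{x=c-j+1}^{c}\Bigl(1+\frac{b}{x}\Bigr)\le \binom{c}{j}(1+b)^j\le 2^j\binom{c}{j}(b^j+1).$$
So neither Vandermonde nor the split at $2j$ is needed. Even inside your framework, the step that fails is easy to repair. Your ratio bound gives $\binom{c}{j-i}\le\binom{c}{j}\binom{j}{i}$. Then, instead of $\binom{j}{i}\le j^i/i!\le e^j$, use
$$\sum_i\binom{j}{i}\binom{b}{i}\le\sum_i\binom{j}{i}b^i=(1+b)^j.$$

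For $c<j$ no Stirling is needed either:
$$\binom{a}{j}\le\binom{b+j}{j}=\prod_{k=1}^{j}\Bigl(1+\frac{b}{k}\Bigr)\le(1+b)^j\le 2^j(b^j+1).$$
Your cruder bound $\binom{j+b}{j}\le 4^j$ for $b<j$ would also have sufficed. For $b\ge 2$ the right-hand side is at least $3^jb^j\ge 6^j$, and $b\le 1$ is immediate. You overlooked the factor $b^j$.

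This is essentially the paper's proof. It splits at $a-b\le j+1$, where it uses $\binom{a}{j}\le\prod_{k=1}^{j}\frac{b+k+1}{k}\le(3b)^j$ for $b\ge 1$. In the complementary case $a-b\ge j+2$, it bounds the ratio $\binom{a}{j}/\binom{a-b}{j}=\prod(1+b/x)$ by $(2b)^j$.
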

     Note that we adopt the convention  $\binom{u}{v} = 0$ if $v>u$.

\begin{proof}
    If $a<j$ or $b=0$ or $j=0$, then the inequality is trivially true. So let's assume $a\ge j$ and $b,j\ge 1$.

If $a-b \le j+1$, then 
$$
    \binom{a}{j} = \frac{a(a-1)\cdots (a-j+1)}{j!} 
    \le \frac{(b+j+1)(b+j)\cdots(b+2)}{j(j-1) \cdots1} 
    \le 3^j b^j< 3^j \left[\binom{a-b}{j}+1\right](b^j+1)
$$
    where the penultimate
    inequality is because $\frac{b+k+1}{k} \le 3b$ for any $k\!\!\in \!\! \{1,\cdots,j\}$ and $b\ge 1$.

    If $a-b\ge j+2$, then 
    $$
        \frac{\binom{a}{j}}{\binom{a-b}{j}} \!=\! \frac{a(a-1)\cdots(a-j+1)}{(a-b)(a-b-1)\cdots(a-b-j+1)} 
        =\!\!\!\! \prod_{x=a-b-j+1}^{a-b} \!\!\!\left(1+\frac{b}{x}\right) 
        \le (2b)^j\!\!<\!\!  3^j \left[\binom{a-b}{j}+1\right](b^j+1)
    $$
    where the penultimate inequality is true since $1+\frac{b}{x}\le 2b$, for any $x\ge 3$ and $b\ge 1$. The lemma if proved.
\end{proof}

\subsection{Proof of Lemma \ref{moment condition: finite truncate lemma}.} \label{Subsection: smoothness of moment condition; proof of truncate lemma}
 \begin{proof}
     By the same argument as in Lemma \ref{finite approximation}, we may reduce the problem to estimating, for any tuple of integers $(k_1 , \cdots , k_l)$,
     $$|\int  \phi^{(k)} (X) \prod_{j=1}^{l} (X^{(j)})^{k_j} -\int  \phi^{(k)} (X_n) \prod_{j=1}^{l} (X^{(j)}_n)^{k_j}| ,$$

    which is less or equal to
   \begin{align*}
         &|\int  \phi^{(k)} (X) (\prod_{j=1}^{l} (X^{(j)})^{k_j}-\prod_{j=1}^{l} (X^{(j)}_n)^{k_j})| 
        +|\int  (\phi^{(k)} (X)-\phi^{(k)} (X_n)) \prod_{j=1}^{l} (X^{(j)}_n)^{k_j}|\\
        &\le \int  \phi^{(k)} (X) (\prod_{j=1}^{l} (X^{(j)})^{k_j}-\prod_{j=1}^{l} (X^{(j)}_n)^{k_j}) 
        +\int  |(\phi^{(k)} (X)-\phi^{(k)} (X_n)) \prod_{j=1}^{l} (X^{(j)}_n)^{k_j}|  \\
        &:= J_1 + J_2 .
   \end{align*}
      
Here is the outline of the proof. We will show that $J_1,J_2$ decay exponentially in $n$. For each $J_i, i =1,2$, we will discuss the two cases $t\ge k$ and $t<k$ separately. The estimate of $J_1$ in the case $t\ge k$ will be further split into two cases. 
All other estimates in this subsection  can be reduced to situations similar to those that have already appeared in the calculation of $J_1, t\ge k$.

    Now we estimate $J_1$. The first part of the calculation follows  the proof of Lemma ~\ref{finite approximation}. We repeat it once for the readers' convenience and omit similar estimates later on.
    
    As before $\displaystyle X^{(j)} - X_n^{(j)} = \frac{j!}{\lambda_1^j} \sum_{m=n+1}^{\infty} \lambda_{i_1}\cdots \lambda_{i_m}\binom{o(m)}{j} =: R_n^{(j)}$, and we compute
    \begin{align*}
        &\prod_{j=1}^{l} (X^{(j)})^{k_j} - \prod_{j=1}^{l} (X_n^{(j)})^{k_j} =\prod_{j=1}^{l} (X_n^{(j)} + R_n^{(j)})^{k_j} - \prod_{j=1}^{l} (X_n^{(j)})^{k_j}\\
        &=(\sum_{s_1=0}^{k_1} \binom{k_1}{s_1} (X_n^{(1)})^{k_1-s_1} (R_n^{(1)})^{s_1} )\cdots(\sum_{s_l=0}^{k_l} \binom{k_l}{s_l} (X_n^{(l)})^{k_l-s_l} (R_n^{(l)})^{s_l} ) - \prod_{j=1}^{l} (X_n^{(j)})^{k_j}\\
        &= \sum_{\begin{matrix}
            s_1,\cdots,s_l \\
            0\le s_i \le k_i,\forall i\\
            s_1 + \cdots +s_l >0
        \end{matrix}}  \prod_{j=1}^{l}\binom{k_j}{s_j} (X_n^{(j)})^{k_j-s_j}(R^{(j)}_n)^{s_j}
    \end{align*}
  Again, note that in the last summation, the number of summands is uniformly bounded by some constant depending on $l$, and the coefficients $\displaystyle \prod_{j=1}^{l}\binom{k_j}{s_j}$ are also uniformly bounded in terms of $l$. Therefore we can further reduce  the problem to  estimating
  $$\int \phi^{(k)}(X) \prod_{j=1}^{l} (X_n^{(j)})^{k_j-s_j}(R^{(j)}_n)^{s_j},$$
which is essentially
\begin{equation}\label{Similar Estimate 1 in moment and bouded derivative}
    \int X^{t-k} \prod_{j=1}^{l} (X_n^{(j)})^{k_j-s_j}(R^{(j)}_n)^{s_j}.
\end{equation}

  Now the proof differs from that of Lemma \ref{finite approximation} and we use the condition $\frac{\lambda_1^t + \lambda^t_2}{2} <1$. We discuss the cases $k\le t$ and $k>t$ separately. In the remainder of this section, the expression $A \preceq B$ will mean $A\le D B$ for some constant $D>0$ that is independent of $n,k,s$, while it could depend on $l,\lambda_1,\lambda_2,t$.
  
  If $k\le t$, we have
  $$
      \prod_{j=1}^{l} (X_n^{(j)}) ^{k_j-s_j} \le \prod_{j=1}^l (\sum_{m=1}^n \lambda_{i_1} \cdots \lambda_{i_m} K_l(m))^{k_j-s_j} 
      = (\sum_{m=1}^n \lambda_{i_1} \cdots \lambda_{i_m} K_l(m))^{k-s},
  $$
  where $\displaystyle K_l(m):= \sum_{r=0}^l \binom{o(m)}{r}\le C o(m)^{l+1}$ for some constant $C>0$ depending only on $l$ and $s:= s_1 + \cdots s_l$. 
  
  Similarly,
  \begin{align*}
      \prod_{j=1}^{l} (R_n^{(j)}) ^{s_j} \le \left(\sum_{m=n+1}^{\infty} \lambda_{i_1} \cdots \lambda_{i_m} K_l(m)\right)^{s}.
  \end{align*}
We would like to further split the right hand side into two independent parts. One of them only involves $i_1,\cdots,i_n$ and the other depends on the future $i_{m},m\ge n+1$. To do that, we apply Lemma
\ref{Moment Condtion: Lemma for Binomial Difference} to get, for any $m\ge n+1$, 
$$
    K_l(m) \!\!=\!\! \sum_{j=0}^l \binom{o(m)}{j} 
    \!\!\le \!\! \sum_{j=0}^{l} 3^j \left(\binom{o(m)-o(n)}{j}+1\right)(o(n)^j+1)
    \!\!\le\!\! \sum_{j=0}^{l} 3^l \left(\binom{o(m)-o(n)}{j}+1\right)(n^l+1)
    $$$$
    \le 3^l(n^{l}+1)\sum_{j=0}^{l} \left[\binom{o(m)-o(n)}{j}+1\right]
    \preceq (n^{l}+1)((o(m)-o(n))^{l+1}+1).
$$
Therefore, 
\begin{equation}\label{Moment Condition: Estimate of Tail}
   \sum_{m=n+1}^{\infty} \lambda_{i_1} \cdots \lambda_{i_m} K_l(m)\preceq A_n B_n
\end{equation}
where 
$$A_n:= (n^l+1)\lambda_{i_1} \cdots\lambda_{i_n}, \quad  B_n := \sum_{m=n+1}^{\infty}\lambda_{i_{n+1}}\cdots\lambda_{i_m}((o(m)-o(n))^{l+1}+1).$$ Note that $o(m)-o(n)$ equals the number of $1$'s among $i_{n+1} , \cdots i_m$, therefore $B_n$ depends only on $i_m$ for $m\ge n+1$. 

For later use, we also state the following consequence of H\"older inequality. \\

\textbf{FACT}: For any $0<\rho<1$ and any $p,p^* >1$ satisfying $\frac{1}{p}+\frac{1}{p^*} = 1$, we have
\begin{equation}\label{Holder inequality}
    \sum_{m=1}^n \lambda_{i_1 }\cdots\lambda_{i_m} K_l(m) \le 
    \left[\sum_{m=1}^n \left(\frac{\lambda_{i_1}}{\rho}\cdots \frac{\lambda_{i_m}}{\rho}K_l(m)\right)^p  \right]^{\frac{1}{p}} (\sum_{m=1}^n 
  \rho^{mp*})^{\frac{1}{p*}}.
\end{equation}
  
Since $t\ge k$, we also have
\begin{align*}
    X^{t-k} &= (\sum_{m=1}^n \lambda_{i_1}\cdots\lambda_{i_m} +  \sum_{m=n+1}^{\infty} \lambda_{i_1}\cdots\lambda_{i_m})^{t-k}\\
    &\le (\sum_{m=1}^n \lambda_{i_1}\cdots\lambda_{i_m}K_l (m) +  \sum_{m=n+1}^{\infty} \lambda_{i_1}\cdots\lambda_{i_m}K_l (m) )^{t-k} \\
    &\le C (\sum_{m=1}^n \lambda_{i_1}\cdots\lambda_{i_m}K_l (m))^{t-k} +  C(\sum_{m=n+1}^{\infty} \lambda_{i_1}\cdots\lambda_{i_m}K_l (m) )^{t-k},
\end{align*}
where in the last step we used the fact that for any $\alpha\ge 0$, there exists a constant $C = C(\alpha)$ such that
$$(a+b)^{\alpha} \le C(a^\alpha + b^{\alpha}),\quad\forall a,b>0.$$
Moreover, since $C$ depends continuously on $\alpha$ and $0\le t-k\le t$, we may choose a uniform $C$ that works for all $t-k$. It follows that
\begin{align}\notag
    X^{t-k}  \prod_{j=1}^{l} (X_n^{(j)}) ^{k_j-s_j} (R_n^{(j)}) ^{s_j} &\le
   X^{t-k} (\sum_{m=1}^{n} \lambda_{i_1} \cdots \lambda_{i_m} K_l(m))^{k-s}(\sum_{m=n+1}^{\infty} \lambda_{i_1} \cdots \lambda_{i_m} K_l(m))^{s}       \\
    &\preceq (\sum_{m=1}^{n} \lambda_{i_1} \cdots \lambda_{i_m} K_l(m))^{(t-k) +(k-s)}(\sum_{m=n+1}^{\infty} \lambda_{i_1} \cdots \lambda_{i_m} K_l(m))^{s} \notag\\
    &+  (\sum_{m=1}^{n} \lambda_{i_1} \cdots \lambda_{i_m} K_l(m))^{k-s}(\sum_{m=n+1}^{\infty} \lambda_{i_1} \cdots \lambda_{i_m} K_l(m))^{t-k+s} \notag \\
    &=: Q_1+Q_2. \label{Moment Condition: estimate 1}
\end{align}

We first deal with $Q_1$. 

\begin{align}\label{Moment Condition: Estimate of Q_1, first Step}
    Q_1& = \left(\sum_{m=1}^{n} \lambda_{i_1} \cdots \lambda_{i_m} K_l(m)\right)^{t-s}
    \left(\sum_{m=n+1}^{\infty} \lambda_{i_1} \cdots \lambda_{i_m} K_l(m)\right)^{s}\\
    &\le (\sum_{m=1}^{n} \lambda_{i_1} \cdots \lambda_{i_m} K_l(m))^{t-s} A_n^s
  B_n^s. \notag
\end{align}

We claim that $\int (\sum_{m=1}^{n} \lambda_{i_1} \cdots \lambda_{i_m} K_l(m))^{t-s} A^s_n B^s_n\le C\theta^n$ for some 
$C\!\!=\!\! C(\lambda_1,\lambda_2,t,l)\!>\!0$ and $0<\theta = \theta(\lambda_1,\lambda_2,t,l)<1$. Since $B_n$ depends on the future after $n$, the claim follows if we prove that the integral of $B_n$ is uniformly bounded and the integral of the rest decays exponentially.

\noindent
More precisely, we first show that there exist $C \!\!=\!\! C(\lambda_1,\lambda_2,t,l)\!\!>\!\!0$ and 
$\theta\!\! =\!\! \theta(\lambda_1,\lambda_2,t,l)\in (0,1)$ such that for all $n\in \mathbb{N}$,
$$ \int (\sum_{m=1}^{n} \lambda_{i_1} \cdots \lambda_{i_m} K_l(m))^{t-s} A^s_n = $$
\begin{equation}\label{Moment Condition: Estimate 3}
    (n^l+1)^s\int (\sum_{m=1}^{n} \lambda_{i_1} \cdots \lambda_{i_m} K_l(m))^{t-s} (\lambda_{i_1} \cdots\lambda_{i_n})^s \le C\theta^n.
\end{equation}

 Note that $0\le t-s\le t-1$. The calculation is further divided into two cases.

If $t-s\le1$, then 
\begin{align*}
    (\sum_{m=1}^{n} \lambda_{i_1} \cdots \lambda_{i_m} K_l(m))^{t-s}&\le \sum_{m=1}^{n} \lambda_{i_1}^{t-s} \cdots \lambda_{i_m}^{t-s} K^{t-s}_l(m) 
     \le \sum_{m=1}^{n} \lambda_{i_1}^{t-s} \cdots \lambda_{i_m}^{t-s} K_l(m).
\end{align*}
Here the first inequality follows from Lemma \ref{Moment Condition: Power of Sum is less than Sum of Power}, and the second is because $K_l(m)\ge 1$ and $t-s\le 1$. Thus
$$(\sum_{m=1}^{n} \lambda_{i_1} \cdots \lambda_{i_m} K_l(m))^{t-s} (\lambda_{i_1} \cdots\lambda_{i_n})^s \le \sum_{m=1}^{n} \lambda_{i_1} \cdots \lambda_{i_m} K_l(m)(\lambda_{i_{m+1}} \cdots\lambda_{i_n})^s.$$

A simple calculation shows that for $j = 0,1,\cdots ,l$,
\begin{equation}\label{Moment Condition: Estimate on the Expectation with Polynomial Distortion}
    \int\sum_{m=1}^{n} \lambda_{i_1} \cdots \lambda_{i_m} \binom{o(m)}{j} \lambda^s_{i_{m+1}}\cdots\lambda^s_{i_n} 
    =  \sum_{m=1}^{n}\left(\frac{\lambda_1^s + \lambda_2^s}{2}\right)^{n-m} \int \lambda_{i_1} \cdots \lambda_{i_m} \binom{o(m)}{j}
\end{equation} 
\begin{align*}
    &=\sum_{m=1}^{n}\left(\frac{\lambda_1^s + \lambda_2^s}{2}\right)^{n-m} \frac{1}{2^m} \lambda_1^j (\lambda_1+\lambda_2)^{m-j} \binom{m}{j} \\
    &= \left(\frac{\lambda_{1}}{\lambda_1 + \lambda_2}\right)^j \sum_{m=1}^{n} 
    \left(\frac{\lambda_1^s + \lambda_2^s}{2}\right)^{n-m}\left(\frac{\lambda_1+\lambda_2}{2}\right)^m \binom{m}{j}\\
    &\le  \sum_{m=1}^n \theta_1^{n-m}\theta_1^m \left(\frac{\lambda_1+\lambda_2}{2\theta_1}\right)^m \binom{m}{j}
    \le C_1\theta_1^n.
\end{align*}
Above $0<\theta_1<1$ is any real number such that 
$\DS  \theta_1>\kappa:=\max_{u\in [1, t]}\left\{\frac{\lambda_1^u + \lambda_2^u}{2}\right\}$.
Since $\frac{\lambda_1^t +\lambda_2^t}{2}<1$, we have
$\kappa<1$  by  monotonicity of the function $x\mapsto \frac{\lambda_1^x + \lambda_2^x}{2} , x>0$. Moreover, $\DS C_1:=\sum_{m=1}^\infty \left(\frac{\lambda_1+\lambda_2}{2\theta_1}\right)^m \binom{m}{j} <\infty$ since $\frac{\lambda_1+\lambda_2}{2\theta_1} <1$. 
Summing  over $j$ yields 
\begin{equation}\label{Moment Condition: Estimate 2}
    \int (\sum_{m=1}^{n} \lambda_{i_1} \cdots \lambda_{i_m} K_l(m))^{t-s} A^s_n\le (n^l+1)^s Cl\theta^n \le C' (\theta')^n
\end{equation}
proving the claim.\\

If $t\!-\!s\!\!>\!\!1$, fix a real number $\rho$ such that $0<\rho<1$ and $\frac{\lambda_1^t + \lambda^t_2}{2\rho^{t-s}} <1$.  By H\"older inequality
\begin{align}\label{Moment Condition: Holder Trick}
    (\sum_{m=1}^{n} \lambda_{i_1} \cdots \lambda_{i_m} K_l(m))^{t-s}&\le (\sum_{m=1}^{n} (\frac{\lambda_{i_1}}{\rho} \cdots \frac{\lambda_{i_m}}{\rho} K_l(m))^{t-s})(\sum_{m=1}^n \rho^{m(t-s)^*})^{\frac{t-s}{(t-s)^*}}\\
    &\le I(\rho) \sum_{m=1}^{n} \left(\frac{\lambda_{i_1}}{\rho}\right)^{t-s} \cdots 
    \left(\frac{\lambda_{i_m}}{\rho}\right)^{t-s} K^{t-s}_l(m).
\end{align}
It follows that
\begin{align*}
      \int (\sum_{m=1}^{n} \lambda_{i_1} \cdots \lambda_{i_m} K_l(m))^{t-s} A^s_n&\le
      I(\rho)(n^l+1) \int    \sum_{m=1}^{n} \frac{\lambda_{i_1}^t}{\rho^{t-s}} \cdots \frac{\lambda_{i_m}^t}{\rho^{t-s}} K^{t-s}_l(m) \lambda_{i_{m+1}}^{s} \cdots\lambda_{i_{n}}^s.
\end{align*}
Here, $(t-s)^*>1$  is such that $\frac{1}{t-s}+\frac{1}{(t-s)^*}=1$ and $I(\rho) = (\sum_{m=1}^\infty \rho^{m(t-s)^*})^{\frac{t-s}{(t-s)^*}}<\infty$. Note that we can choose $\rho$ such that $I(\rho)$ is  uniformly bounded in terms of $\lambda_1,\lambda_2,t$.

Moreover, since the term $K^{t-s}_l(m)\preceq m^{t(l+1)}$, a calculation similar to \eqref{Moment Condition: Estimate on the Expectation with Polynomial Distortion} yields that if we fix a $\theta_2$ such that $1>\theta_2>\max\{\kappa,\frac{\lambda_1^t+\lambda_2^t}{2\rho^{t-s}}\}$ and a $\theta>0$ such that $1>\theta>\theta_2$, then there exists $C_2 = C_2(\lambda_1,\lambda_2,t,l)>0$ and $C = C(\theta_2,\theta) = C(\lambda_1,\lambda_2,t,l)>0$ such that
\begin{align*}
    I(\rho)(n^l+1) \int    \sum_{m=1}^{n} \frac{\lambda^{t}_{i_1}}{\rho^{t-s}} \cdots \frac{\lambda^{t}_{i_m}}{\rho^{t-s}} K^{t-s}_l(m) \lambda_{i_{m+1}}^{s} \cdots\lambda_{i_{n}}^s &\le I(\rho)(n^l+1)   C_2 \theta_2^n
    \le C\theta^n.
\end{align*}
This finishes the proof of  \eqref{Moment Condition: Estimate 3}.\\

Next, we show that there exists $M = M(\lambda_1,\lambda_2,t,l)<\infty$ such that
$$\int B_n^s \le M,\quad\forall n\in \mathbb{N}.$$
Recall that $\DS B_n := (\sum_{m=n+1}^{\infty}\lambda_{i_{n+1}}\cdots\lambda_{i_m}((o(m)-o(n))^{l+1}+1))$ and that $o(m)-o(n)$ is exactly the number of $1$'s  among $i_{n+1} , \cdots i_m$. Therefore the above is equivalent to

\begin{equation}\label{Moment Condition: Estimate 4}
    \int  (\sum_{m=1}^{\infty}\lambda_{i_{1}}\cdots\lambda_{i_m}(o(m)^{l+1}+1))^s \le M,
\end{equation}
which is readily seen. Indeed, note that $t\ge k\ge s\ge1$ and $\frac{\lambda_1^s + \lambda_2^s}{2}<1$. If $s=1$, then since $(o(m)^{l+1}+1)^s\le Do(m)^{l(l+1)}$ is of polynomial growth,  a calculation similar to \eqref{Moment Condition: Estimate on the Expectation with Polynomial Distortion} yields  the result.
If $s>1$, then we first fix some suitable $\rho<1$ satisfying $\frac{\kappa}{\rho^s}<1$ and apply H$\ddot{o}$der inequality to get an estimate similar to \eqref{Moment Condition: Holder Trick}:
$$\int(\sum_{m=1}^{\infty}\lambda_{i_{1}}\cdots\lambda_{i_m}(o(m)^{l+1}+1))^s\le I(\rho) \int \sum_{m=1}^{\infty}\frac{\lambda^s_{i_{1}}}{\rho^s}\cdots\frac{\lambda^s_{i_m}}{\rho^s}(o(m)^{l+1}+1)^s.$$
Again because $(o(m)^{l+1}+1)^s$ is uniformly of polynomial growth and $\frac{\lambda_1^s + \lambda^s_2}{2\rho^s}\le \frac{\kappa}{\rho^s}<1$ , a \eqref{Moment Condition: Estimate on the Expectation with Polynomial Distortion} type argument finishes the proof.

Combining \eqref{Moment Condition: Estimate 3} and \eqref{Moment Condition: Estimate 4}  we obtain
  $\DS   \int Q_1 
  \le MC \theta^n.$

This shows that $\int Q_1$ is exponentially decaying in $n$ in the case $t\ge k$. 

Next, we prove $\int Q_2 \le C \theta^n$ for the same $C$ and $\theta$ that work for $\int Q_1$. Indeed, since $t-k+s > 0$, 
we apply \eqref{Moment Condition: Estimate of Tail} to get
\begin{align}
    Q_2 &=(\sum_{m=1}^{n} \lambda_{i_1} \cdots \lambda_{i_m} K_l(m))^{k-s}(\sum_{m=n+1}^{\infty} \lambda_{i_1} \cdots \lambda_{i_m} K_l(m))^{t-k+s}\\
    &\preceq (\sum_{m=1}^{n} \lambda_{i_1} \cdots \lambda_{i_m} K_l(m))^{k-s} A_n^{t-k+s} B_n^{t-k+s}.
\end{align}
Note that $0\le k-s\le k-1\le t-1$, so this is exactly the same situation as we faced in estimating $Q_1$, see 
\eqref{Moment Condition: Estimate of Q_1, first Step}, except that now $k-s$ plays the role of $t-s$. We can again discuss the cases $k-s\le 1$ can $k-s>1$ separately and follow the same reasoning as above, which will lead to
$\DS \int Q_2 \le MC\theta^n$
for the same $M,\theta,C$ that works for $Q_1$. Then it is immediate that
\begin{equation}\label{Moment Condition: Estimate of J_1, case t ge k}
    J_1 \preceq  \int Q_1 +\int Q_2 \le 2CM \theta^n, 
\end{equation}
in the case $t\ge k$. \\

Next, we estimate $ J_1$ in the case $t<k$. Define $\beta = \frac{t}{k}\in (0,1)$ and note that
\begin{align}
    &  X^{t-k} ( \sum_{m=1}^{n} \lambda_{i_1} \cdots \lambda_{i_m} K_l(m))^{k-s}(\sum_{m=n+1}^{\infty} \lambda_{i_1} \cdots \lambda_{i_m} K_l(m))^{s} \notag \\
    &=\frac{(\sum_{m=1}^{n} \lambda_{i_1} \cdots \lambda_{i_m} K_l(m))^{k-s}(\sum_{m=n+1}^{\infty} \lambda_{i_1} \cdots \lambda_{i_m} K_l(m))^{s}}{X^{(1-\beta)(k-s)} X^{(1-\beta)s}} \notag \\
    &= \left(\sum_{m=1}^{n} \frac{\lambda_{i_1} \cdots \lambda_{i_m}}{X^{1-\beta}} K_l(m)\right)^{k-s}
    \left(\sum_{m=n+1}^{\infty} \frac{\lambda_{i_1} \cdots \lambda_{i_m}}{X^{1-\beta}} K_l(m)\right)^{s} \notag \\
    &\le \left(\sum_{m=1}^{n} \lambda^\beta_{i_1} \cdots \lambda^\beta_{i_m} K_l(m)\right)^{k-s}
    \left(\sum_{m=n+1}^{\infty} \lambda^\beta_{i_1} \cdots \lambda^\beta_{i_m} K_l(m)\right)^{s}\quad \text{ since } X\ge \lambda_{i_1}\cdots\lambda_{i_m},\forall m \notag \\
    &\preceq \left(\sum_{m=1}^{n} \lambda^\beta_{i_1} \cdots \lambda^\beta_{i_m} K_l(m)\right)^{k-s} 
    \left(\lambda^\beta_{i_1} \cdots \lambda^\beta_{i_m}\right)^s 
    \left(\sum_{m=n+1}^{\infty} \lambda^\beta_{i_{n+1}} \cdots \lambda^\beta_{i_m} ((o(m)-o(n))^l+1)\right)^{s},
    \label{5.16A}
\end{align}
where the last inequality is due to \eqref{Moment Condition: Estimate of Tail}. Define $\tilde{\lambda}_1 = \lambda^\beta_1$ and $\tilde{\lambda}_2  = \lambda^\beta_2$. Then the above expression becomes
\begin{equation}
    (\sum_{m=1}^{n} \tilde{\lambda}_{i_1} \cdots \tilde{\lambda}_{i_m} K_l(m))^{k-s} (\tilde{\lambda}_{i_1} \cdots \tilde{\lambda}_{i_m})^s (\sum_{m=n+1}^{\infty} \tilde{\lambda}_{i_{n+1}} \cdots \tilde{\lambda}_{i_m} ((o(m)-o(n))^l+1))^{s}. 
\end{equation}

The rest of the calculation is similar to that of $J_1$ in the case $t\ge k$. We claim that there exists $C>0,\theta\in (0,1)$ and $M>0$ depending on $\lambda_1,\lambda_2,t,l$ such that
\begin{equation}\label{Moment Condition: Estimate of J_1, case t<k, first n term}
    \int (\sum_{m=1}^{n} \tilde{\lambda}_{i_1} \cdots \tilde{\lambda}_{i_m} K_l(m))^{k-s} (\tilde{\lambda}_{i_1} \cdots \tilde{\lambda}_{i_m})^s \le C\theta^n
\end{equation}
and
\begin{equation}\label{Moment Condition: Estimate of J_1, case t<k, tail term}
    \int (\sum_{m=n+1}^{\infty} \tilde{\lambda}_{i_{n+1}} \cdots \tilde{\lambda}_{i_m} ((o(m)-o(n))^l+1))^{s} \le M.
\end{equation}
 Now we are in the same situation as before. More precisely, since $\frac{\tilde{\lambda}_1^k + \tilde{\lambda}_2^k}{2} = \frac{\lambda_1^t + \lambda_2^t}{2}<1$, the proof of \eqref{Moment Condition: Estimate of J_1, case t<k, first n term} follows from the same reasoning as in \eqref{Moment Condition: Estimate 3}. Moreover, since 
 $$\frac{\tilde{\lambda}_1^s + \tilde{\lambda}_2^s}{2} = \frac{\lambda_1^{\frac{st}{k}} + \lambda_2^{\frac{st}{k}}}{2}\le \kappa':= \max\left\{ \frac{\lambda_1^{\frac{u}{v}t} + \lambda_2^{\frac{u}{v}t}}{2} \right\} <1,$$ 
 where the maximum is taken among $\{(u,v)\in \mathbb{Z}^2: t<v\le l, 1\le u \le v\}$, the proof of \eqref{Moment Condition: Estimate of J_1, case t<k, tail term} is the same as \eqref{Moment Condition: Estimate 4}. Combining the two, we have
 $\DS 
     J_1 \le MC\theta^n,\quad\forall n\in \mathbb{N}.    
$
  This proves that $ J_1$ decays exponentially in the case $t<k$ and the estimate of $J_1$ is complete. \\
  
  We now turn to $J_2$. Again, we discuss the two cases $k\le t$ and $k>t$.
  
  If $k\le t$, we can further assume $t>k$, because otherwise  $\phi^{(k)}(X) - \phi^{(k)}(X_n)$ is zero and $J_2=0$. Now 
  \begin{align}\label{Similar estimate 2 in moment condition and bounded derivative}
      |(\phi^{(k)} (X)-\phi^{(k)} (X_n)) \prod_{j=1}^{l} (X^{(j)}_n)^{k_j}|& \preceq (X^{t-k} - X_n^{t-k}) \prod_{j=1}^{l} (X^{(j)}_n)^{k_j}\nonumber \\
      &\preceq X^{t-k-1}(X-X_n) \prod_{j=1}^{l} (X^{(j)}_n)^{k_j}\nonumber\\
      &\le X^{t-k-1} (\sum_{m=n+1}^{\infty} \lambda_{i_1} \cdots \lambda_{i_m}K_l(m))(\sum_{m=1}^{n} \lambda_{i_1} \cdots \lambda_{i_m} K_l(m))^k  
  \end{align}
where in the second inequality we have used Lemma \ref{Moment Condition: Lemma on Diference of a-Power}.  

 Now there are two sub-cases. If $t\ge k+1$, then we are in the same situation
as in the second term in \eqref{Moment Condition: estimate 1}, with $k$ replaced with $k+1$ and $s$ replaced by $1$. Therefore, there exists $C= C(\lambda_1,\lambda_2,t,l)>0$ and $\theta = \theta(\lambda_1,\lambda_2,t,l)\in (0,1)$ such that

\begin{equation}\label{Moment Condition: Estimate of J_2, last step}
    J_2 \le C\theta^n,
\end{equation}
proving that $J_2$ is exponentially decaying in the case $k\le t$.

If $k< t<k+1$, write $\beta := t-k\in(0,1)$. We have
\begin{align}
    &X^{t-k-1}(X-X_n)  (\sum_{m=1}^{n} \lambda_{i_1} \cdots \lambda_{i_m} K_l(m))^k \\
    &= (\sum_{m=1}^{n} \lambda_{i_1} \cdots \lambda_{i_m} K_l(m))^{t-\beta}  \frac{\lambda_{i_1}\cdots\lambda_{i_n}(\sum_{m=n+1}^{\infty} \lambda_{i_{n+1}} \cdots\lambda_{i_m})}{X^{1-\beta}}\\
    &\le  (\sum_{m=1}^{n} \lambda_{i_1} \cdots \lambda_{i_m} K_l(m))^{t-\beta}  (\lambda_{i_1}\cdots\lambda_{i_n})^{\beta}(\sum_{m=n+1}^{\infty} \lambda^{\beta}_{i_{n+1}} \cdots\lambda^{\beta}_{i_m}).
\end{align}
 Moreover, the right hand side is exponentially small if we can show that $$\int (\sum_{m=1}^{n} \lambda_{i_1} \cdots \lambda_{i_m} K_l(m))^{t-\beta}  (\lambda_{i_1}\cdots\lambda_{i_n})^{\beta} \text{  is exponentially small }$$ and $$\int(\sum_{m=n+1}^{\infty} \lambda^{\beta}_{i_{n+1}} \cdots\lambda^{\beta}_{i_m}) \text{ is uniformly bounded.}$$
 Now the former can be proved exactly the same way as  \eqref{Moment Condition: Estimate 3}, and the latter is obvious since $\frac{\lambda_1^\beta + \lambda_2^\beta}{2}<1$ by monotonicity of the function $x\mapsto \frac{\lambda_1^x+\lambda_2^x}{2}$. Thus $J_2$ is exponentially small in the case $t\ge k$.

If $k>t$, then 
 \begin{align}
      |(\phi^{(k)} (X)-& \phi^{(k)} (X_n)) \prod_{j=1}^{l} (X^{(j)}_n)^{k_j}| \preceq (X_n^{t-k} - X^{t-k}) \prod_{j=1}^{l} (X^{(j)}_n)^{k_j} \notag \\
      &\preceq X_n^{t-k}X^{-1} (X-X_n)  (\sum_{m=1}^{n} \lambda_{i_1} \cdots \lambda_{i_m} K_l(m))^k \notag \\
      &\le X_n^{t-k} \left(\sum_{m=1}^{n} \lambda_{i_1} \cdots \lambda_{i_m} K_l(m)\right)^k  \frac{\sum_{m=n+1}^{\infty}\lambda_{i_1}\cdots\lambda_{i_m}  }{X}
      \label{K<TIn1}
  \end{align}
  where the middle inequality uses Lemma \ref{Moment Condition: Lemma on Diference of a-Power}. Define $\beta = \frac{t}{k+1}\in (0,1)$ and rewrite \eqref{K<TIn1} as
\begin{align*}
      &\frac{(\sum_{m=1}^{n} \lambda_{i_1} \cdots \lambda_{i_m} K_l(m))^{k}}{X_n^{(1-\beta)k} } \frac{\sum_{m=n+1}^{\infty} \lambda_{i_1}\cdots\lambda_{i_m}}{X^{1-\beta}} \frac{X_n^{\beta}}{X^{\beta}} \\
    &\le (\sum_{m=1}^{n} \frac{\lambda_{i_1} \cdots \lambda_{i_m}}{X_n^{1-\beta}} K_l(m))^{k} \sum_{m=n+1}^{\infty} \lambda^{\beta}_{i_1} \cdots\lambda^{\beta}_{i_m} 
   \le (\sum_{m=1}^{n} \lambda^\beta_{i_1} \cdots \lambda^\beta_{i_m} K_l(m))^{k} \lambda^{\beta}_{i_1}\cdots\lambda^{\beta}_{i_n}
 \sum_{m=n+1}^{\infty} \lambda^{\beta}_{i_{n+1}} \cdots\lambda^{\beta}_{i_m}.
\end{align*}
 since  $X\ge \lambda_{i_1}\cdots\lambda_{i_m}$ for all $m$ and  $X>X_n.$

Note that $$\frac{\lambda_1^{(k+1)\beta} + \lambda_2^{(k+1)\beta}}{2} = \frac{\lambda^t_1 + \lambda^t_2}{2}<1\quad\text{and}\quad \frac{\lambda_1^{\beta}+\lambda_2^{\beta}}{2} \le 
\kappa'':= \max_{u\in\mathbb{Z}: t+1< u\le l}\left\{\frac{\lambda_1^{\frac{t}{u}}+\lambda_2^{\frac{t}{u}}}{2}\right\}<1.$$ 
Therefore, the above right hand side is almost the same as the 
right hand side in 
\eqref{5.16A}, with $k$ replaced with $k+1$ and $s$ replaced with $1$. 
The only difference is that here the tail term is $\DS B_n' :=\sum_{m=n+1}^{\infty} \lambda^{\beta}_{i_{n+1}} \cdots\lambda^{\beta}_{i_m}$ versus
$\DS B_n =\sum_{m=n+1}^{\infty} \lambda^{\beta}_{i_{n+1}} \cdots\lambda^{\beta}_{i_m} (1+(o(m) - o(n))^l)$ in 
\eqref{5.16A}.
But clearly $B_n'\le B_n$, so by the same reasoning as in \eqref{Moment Condition: Power of Sum is less than Sum of Power}, there exists $C = C(\lambda_1,\lambda_2,t,l)>0$, $M = M(\lambda_1,\lambda_2,t,l)>0$ and $\theta = \theta(\lambda_1,\lambda_2,t,l)\in (0,1)$ such that 
$$\int (\sum_{m=1}^{n} \lambda^\beta_{i_1} \cdots \lambda^\beta_{i_m} K_l(m))^{k} \lambda^{\beta}_{i_1}\cdots\lambda^{\beta}_{i_n}
 \sum_{m=n+1}^{\infty} \lambda^{\beta}_{i_{n+1}} \cdots\lambda^{\beta}_{i_m} \le MC\theta^n.$$
 This immediately implies $J_2 \le MC\theta^n$ as desired. The proof is complete.
 \end{proof}

\subsection{Proof of Lemma \ref{Moment Condition: Finite Difference Converges to Derivative}.}\label{Subsection: smoothness of moment condition; proof of convergence lemma}
 \begin{proof}
 As in the proof of Lemma \ref{second step:finite difference converges to derivative}, we wish to show, for any given tuple of integers $(k_1,\cdots k_l)$ satisfying $\DS \sum_{i=1}^l ik_i = l$, 
      \begin{equation}\label{Moment Condition: Finite Difference converges to derivative, step 1}
          \lim_{\epsilon\to 0} \frac{1}{\epsilon} \left(\int  \phi^{(k)} (X_n(\epsilon)) \prod_{j=1}^{l} (X^{(j)}_n (\epsilon))^{k_j} -\int  \phi^{(k)} (X_n) \prod_{j=1}^{l} (X^{(j)}_n)^{k_j}\right) 
          \end{equation}
        $$=\int  \phi^{(k+1)} (X)X^{(1)} \prod_{j=1}^{l} (X^{(j)})^{k_j} + 
        \int  \phi^{(k)} (X) \sum_{j=1}^{l} (k_j (X^{(j)})^{k_j -1} X^{(j+1)}\prod_{i\neq j} (X^{(i)})^{k_i}) ,$$

where $\DS k:=\sum_{i=1}^{l} k_i$. As before, we divide the difference into two parts.
\begin{align*}
    &\int  \phi^{(k)} (X_n(\epsilon)) \prod_{j=1}^{l} (X^{(j)}_n (\epsilon))^{k_j}-\int  \phi^{(k)} (X_n) \prod_{j=1}^{l} (X^{(j)}_n)^{k_j})\\
    =&\int  (\phi^{(k)} (X_n(\epsilon))-\phi^{(k)}(X_n))
    \prod_{j=1}^{l} (X^{(j)}_n )^{k_j} 
    +\int  \phi^{(k)} (X_n(\epsilon)) (\prod_{j=1}^{l} (X^{(j)}_n (\epsilon))^{k_j}-\prod_{j=1}^{l} (X^{(j)}_n)^{k_j})) \\
    =:& G_1 + G_2.
\end{align*}
 We first deal with $\DS \lim_{\epsilon\to 0} \frac{1}{\epsilon}G_2$. By a simple calculation, for $j=0,\cdots,l$ we have
 $$X_n^{(j)}(\epsilon) = X_n^{(j)} + \sum_{r=1}^{n} b^{(j)}_{n,r} \epsilon^r=: X^{(j)}_n + T^{(j)}_n,$$
 where
 \begin{equation}\label{Moment Condition: Coefficient for epsilon^r}
     b^{(j)}_{n,r}:=\frac{j!}{\lambda_1^{r+j}} \sum_{m=1}^n \lambda_{i_1}\cdots\lambda_{i_m} \binom{o(m)-j}{r} \binom{o(m)}{j}.
 \end{equation}
 
By the same reasoning as in Lemma \ref{second step:finite difference converges to derivative}, it suffices
to calculate the limit 

\begin{equation}
    \lim_{\epsilon\to 0} \frac{1}{\epsilon}\int X^{t-k}_n \prod_{j=1}^l (X^{(j)}_n)^{k_j-s_j} (T^{(j)}_n)^{s_j} 
\end{equation}

for any given integer tuple $(s_1,\cdots ,s_l)$ with $0\le s_i\le k_i , i=1,\cdots,l$ and $\DS \sum_{i=1}^l s_i >0$. 
Note that we can write $X_n^{t-k}$ instead of $X^{t-k}_n(\epsilon)$ since $\frac{X_n}{X_{n}(\epsilon)}\to 1$ as $\epsilon\to 0$. To see this, note that $n = \mathcal{O}(\log\frac{1}{|\epsilon|})$ and $o(m)\le m$. Thus if $|\epsilon|$ is small enough, we have for any $1\le m\le n$,

$$\frac{\lambda_{i_1}(\epsilon) \cdots\lambda_{i_m}(\epsilon)}{\lambda_{i_1} \cdots\lambda_{i_m}} = \frac{(\lambda_1 + \epsilon)^{o(m) }\lambda_2^{m-o(m)}}{\lambda_1 ^{o(m) }\lambda_2^{m-o(m)}} = 
\left(1+\frac{\epsilon}{\lambda_1}\right)^{o(m)}\to 1,\text{ as }\epsilon\to 0.$$
Hence $\frac{X_n(\epsilon)}{X_n}\to 1$ as $\epsilon\to 0$.

We claim that 
\begin{align}\label{Similar estimate 3 of moment condition and bounded derivative}
   & \lim_{\epsilon\to 0}\frac{1}{\epsilon} \int X^{t-k}_n \prod_{j=1}^l (X^{(j)}_n)^{k_j-s_j}(T^{(j)}_n)^{s_j} \\
   &=
    \begin{cases}
        0, \text{ if }\DS \sum_{j=1}^l s_j \ge 2 \\
        \sum_{1\le j\le l,k_j \ge 1} \int X^{t-k} \prod_{i\neq j}(X^{(i)})^{k_i} k_j (X^{(j)})^{k_j-1} X^{(j+1)}, \text{ if }\DS \sum_{j=1}^l s_j = 1,
    \end{cases} \notag
\end{align}
which agrees with taking formal derivative.

Note that for any $j\in \{1,\cdots,l\}$ and $r\in \{1,\cdots,n\}$,
$$
    b^{(j)}_{n,r} = \frac{j!}{\lambda_1^{r+j}} \sum_{m=1}^n \lambda_{i_1}\cdots\lambda_{i_m} \binom{o(m)-j}{r} \binom{o(m)}{j}
    $$$$
    \le \frac{l!}{\lambda_1^{l}}  \frac{1}{\lambda_1^r}\sum_{m=1}^n \lambda_{i_1}\cdots\lambda_{i_m} (o(m)-j)^r o(m)^j  
    \preceq  \frac{1}{\lambda_1^r}o(n)^{r}\sum_{m=1}^n \lambda_{i_1}\cdots\lambda_{i_m}  (1+o(m)^{l+1}).
$$

Therefore 
\begin{equation}
    |(\sum_{r=1}^{n} b^{(j)}_{n,r}  \epsilon^r)^{s_{j}}| \preceq \left(\sum_{m=1}^n \lambda_{i_1}\cdots\lambda_{i_m}  (o(m))^{l}\right)^{s_j}\left(\sum_{r=1}^{n} o(n)^r
    \left(\frac{|\epsilon|}{\lambda_1}\right)^r \right)^{s_j},\quad j=1,\cdots ,l.
\end{equation}
On the other hand, we have 
$$\prod_{j=1}^l (X^{(j)}_n)^{k_j-s_j} \le (\sum_{m=1}^{n}{\lambda_{i_1}} \cdots\lambda_{i_m}K_l(m))^{k-s}\preceq (\sum_{m=1}^{n}{\lambda_{i_1}} \cdots\lambda_{i_m}(1+o(m)^{l+1}))^{k-s},$$
where $s:=s_1+ \cdots+s_l$. Taking products, we see that
$$
    X^{t-k}_n \prod_{j=1}^l (X^{(j)}_n)^{k_j-s_j}|(T^{(j)}_n)^{s_j}|\preceq X_n^{t-k}  (\sum_{m=1}^{n}{\lambda_{i_1}} \cdots\lambda_{i_m}(1+o(m)^{l+1}))^{(k-s)+s}(\sum_{r=1}^{n} o(n)^r (\frac{|\epsilon|}{\lambda_1})^r)^s
    $$$$
    \le X_n^{t-k} \left(\sum_{m=1}^{n}X_n(1+o(m)^{l+1})\right)^{k}\left(\sum_{r=1}^{n} n^r (\frac{|\epsilon|}{\lambda_1})^r\right)^s 
    = X_n^t \sum_{m=1}^n (1+m^{l+1})^k\left(\sum_{r=1}^{n} n^r \left(\frac{|\epsilon|}{\lambda_1}\right)^r\right)^s
    $$$$
    \preceq X_n^t (1+n^{l^2+l+1}) \left(\sum_{r=1}^{n} n^r \left(\frac{|\epsilon|}{\lambda_1}\right)^r\right)^s
$$

If $s\ge 2$, it follows that
\begin{align}\label{Moment Condition: Tails of order ge 2 converges to 0}
    |\frac{1}{\epsilon} X^{t-k}_n \prod_{j=1}^l (X^{(j)}_n)^{k_j-s_j} (T^{(j)}_n)^{s_j} | &\preceq (1+n^{l^2+l+1})
    \left(\sum_{r=1}^{n} n^r \frac{|\epsilon|^{r(1-\frac{1}{rs})}}{\lambda_1^r}\right)^s\int X_n^t
\end{align}
and the right hand side converges to $0$ as $\epsilon\to 0$, since $\int X_n^t$ is uniformly bounded, $n$ is of order $\log\frac{1}{|\epsilon|}$ and $1-\frac{1}{rs}\ge \frac{1}{2},\forall r\ge 1$.\\

The remaining terms in $G_2$ correspond to those tuples where one of $s_j$ is one and the others are zero. They sum up to
\begin{align*}
    \phi^{(k)}(X_n(\epsilon))\sum_{1\le j\le l,k_j\ge 1} k_j \prod_{i\neq j} (X_n^{(i)})^{k_i} (X_n^{(j)})^{k_j-1}  T_{n}^{(j)}.
\end{align*}
Fix any $j\in\{1,\cdots,l\}$ with $k_j \ge 1$, it suffices to show that
\begin{equation*}
    \lim_{\epsilon\to 0} \frac{1}{\epsilon}\int  \phi^{(k)}(X_n(\epsilon))k_j \prod_{i\neq j} (X_n^{(i)})^{k_i} (X_n^{(j)})^{k_j-1}  T_{n}^{(j)} = \int \phi^{(k)}(X)k_j \prod_{i\neq j} (X^{(i)})^{k_i} (X^{(j)})^{k_j-1}  X^{(j+1)}.
\end{equation*}

Observe that 
$$\frac{1}{\epsilon}T^{(j)}_n = b^{(j)}_{n,1} + \sum_{r=1}^{n-1} b^{(j)}_{n,r+1} \epsilon^r \quad \text{ and }\quad b^{(j)}_{n,1} = X_n^{(j+1)}.$$
Thus it suffices to show

\begin{equation}\label{Similar estimate 4 in moment conditiona and bounded derivative}
    \lim_{\epsilon\to 0} \int  \phi^{(k)}(X_n(\epsilon))k_j \prod_{i\neq j} (X_n^{(i)})^{k_i} (X_n^{(j)})^{k_j-1}  
    \left(\sum_{r=1}^{n-1} b^{(j)}_{n,r+1} \epsilon^r \right) =0,
\end{equation}
and 
\begin{equation}\label{Moment Condition: Estimate of first order term in G_2}
     \lim_{\epsilon\to 0} \int  \phi^{(k)}(X_n(\epsilon))k_j \prod_{i\neq j} (X_n^{(i)})^{k_i} (X_n^{(j)})^{k_j-1}  X_{n}^{(j+1)} = 
\end{equation}
$$ \int \phi^{(k)}(X)k_j \prod_{i\neq j} (X^{(i)})^{k_i} (X^{(j)})^{k_j-1}  X^{(j+1)}.$$
The first identity can be proved in the same way as \eqref{Moment Condition: Tails of order ge 2 converges to 0} so we omit its proof. For the second identity, we discuss separately the cases $t\ge k$ and $t<k$.

If $t\ge k$, then 
\begin{align*}
      \phi^{(k)}(X_n(\epsilon))k_j \prod_{i\neq j} (X_n^{(i)})^{k_i} (X_n^{(j)})^{k_j-1} X_{n}^{(j+1)}&\preceq(\sum_{m=1}^n \lambda_{i_1}\cdots\lambda_{i_m} K_l(m))^{t-k+\sum_{i\neq j}k_i + (k_j-1)+1}\\
      &= (\sum_{m=1}^n \lambda_{i_1}\cdots\lambda_{i_m} K_l(m))^t
\end{align*}
where the inequality follows by $\DS\ \lim_{\epsilon\to 0} \frac{X_n{(\epsilon)}}{X_n} = 1$, $t-k\ge 0$ and 
$\DS X_n^{(j)} \le\sum_{m=1}^n \lambda_{i_1}\cdots\lambda_{i_m} K_l(m)$ for $j=0,1\cdots ,l $. Since $\frac{\lambda_1^t+\lambda_2^t}{2}<1$, the above right hand side converges a.e. and in $L^1$ as $\epsilon\to 0$, by the monotone convergence theorem. Moreover, the left hand side converges a.e. to $\phi^{(k)}(X)k_j \prod_{i\neq j} (X^{(i)})^{k_i} (X^{(j)})^{k_j-1}  X^{(j+1)}$ as $\epsilon\to 0$. Hence the generalized dominated convergence theorem implies 
\eqref{Moment Condition: Estimate of first order term in G_2}.

If $t<k$, then define $\beta = \frac{t}{k}\in (0,1)$, we have
\begin{align*}
      \phi^{(k)}(X_n(\epsilon))k_j \prod_{i\neq j} (X_n^{(i)})^{k_i} (X_n^{(j)})^{k_j-1} X_{n}^{(j+1)}&\preceq \frac{(\sum_{m=1}^n \lambda_{i_1}\cdots\lambda_{i_m} K_l(m))^{\sum_{i\neq j}k_i + (k_j-1)+1}}{X_n^{k-t}}\\
      = \frac{(\sum_{m=1}^n \lambda_{i_1}\cdots\lambda_{i_m} K_l(m))^k}{X_n^{k(1-\beta)}} 
      &\le (\sum_{m=1}^n \lambda^{\beta}_{i_1}\cdots\lambda^{\beta}_{i_m} K_l(m))^k.
\end{align*}
Since $\frac{\lambda_1^{k\beta}+\lambda_2^{k\beta}}{2} = \frac{\lambda_1^t + \lambda_2^t}{2}<1$, the right hand side is both convergent a.e. and in $L^1$ as $\epsilon\to 0$, and similarly the generalized dominated convergence theorem leads to 
\eqref{Moment Condition: Estimate of first order term in G_2}. This finishes the calculation for $G_2$.
\\

It remains to calculate $G_1$. Note that
\begin{align*}
    &\frac{1}{\epsilon} \int  (\phi^{(k)} (X_n(\epsilon))-\phi^{(k)}(X_n))
    \prod_{j=1}^{l} (X^{(j)}_n )^{k_j} \\
    &= \int  (\phi^{(k+1)} (X_n + \xi_n(X_n(\epsilon)-X_n)) \frac{X_n(\epsilon) - X_n}{\epsilon}\prod_{j=1}^{l} (X^{(j)}_n )^{k_j} \quad \text{for some }\xi_n\in(-1,1)\\
    &= \int  (\phi^{(k+1)} (X_n + \xi_n(X_n(\epsilon)-X_n)) \prod_{j=1}^{l} (X^{(j)}_n )^{k_j}(X_n^{(1)} + \sum_{r=1}^{n-1}
    b^{(0)}_{n,r+1}\epsilon^{r}).
\end{align*}
 Here, $b_{n,r}^{(0)}$ denotes the coefficient of $\epsilon^r$ in the expression $X_n(\epsilon)-X_n$ (recall \eqref{Moment Condition: Coefficient for epsilon^r}):

   $$b_{n,r}^{(0)} = \frac{1}{\lambda_1^r}\sum_{m=1}^{n}\lambda_{i_1}\cdots\lambda_{i_m} \binom{o(m)}{r}.$$
   
   We claim that
   \begin{equation}\label{Moment Condition: Estimate of G_1, tail term}
     \lim_{\epsilon\to 0} \int  (\phi^{(k+1)} (X_n + \xi_n(X_n(\epsilon)-X_n)) \prod_{j=1}^{l} (X^{(j)}_n )^{k_j} ( \sum_{r=1}^{n-1}b^{(0)}_{n,r+1}\epsilon^{r}) = 0
   \end{equation}
   and
   \begin{equation}\label{Moment Condition: Estimate of G_1, first order term}
      \lim_{\epsilon\to 0} \int  (\phi^{(k+1)} (X_n + \xi_n(X_n(\epsilon)-X_n)) \prod_{j=1}^{l} (X^{(j)}_n )^{k_j}X_n^{(1)}  = \int  \phi^{(k+1)} (X) \prod_{j=1}^{l} (X^{(j)} )^{k_j}X^{(1)},
   \end{equation}
   which  implies the desired result. Moreover, \eqref{Moment Condition: Estimate of G_1, tail term} can be proved in the same way as \eqref{Moment Condition: Tails of order ge 2 converges to 0}, and \eqref{Moment Condition: Estimate of G_1, first order term} is proved in the same way as \eqref{Moment Condition: Estimate of first order term in G_2}. The proof is complete.
   \end{proof}

\section{Proof of Theorem \ref{Differentiability of Stationary measure} for higher derivatives.}
\label{ScHD}
\label{Subsection: smoothness of moment condition; proof of induction step}

We are ready to finish the inductive step of Theorem \ref{Differentiability of Stationary measure}. 
\begin{proof}[Continuation of proof of Theorem \ref{Differentiability of Stationary measure}]
    Recall that $\frac{\lambda_1^r+\lambda_2^r}{2}<1$ where $r\in\mathbb{N}$. We wish to show  $h(\epsilon):= \int\phi\,d\mu_{\lambda_1+\epsilon,\lambda_2}$ is differentiable at $0$ up to order $r$ and the derivatives are given by 
    \begin{align*}
        h^{(l)}(0) &= \int_{\Sigma} \frac{\,d^l}{\,d \epsilon^l} (\phi\circ X(\epsilon))|_{\epsilon=0}\,d\nu \\
        &= \int_{\Sigma} \sum_{k_1 + 2k_2 + \cdots lk_l = l} L(k_1, \cdots, k_l) \phi^{(k)} (X) \prod_{j=1}^{l} (X^{(j)})^{k_j}\,d\nu,\; l=1,2,\cdots ,r.
        \end{align*}
        The base case $r=1$ is taken care of in Section \ref{Section: Differentiability via Moment Condition and Bound on Derivative }. Now suppose $r\ge 2$ and the theorem is true for $r-1$. We would like to push it to $r$. Note that $\frac{\tilde{\lambda}_1^{r-1}+\lambda_2^{r-1}}{2}<1$, for any $\tilde{\lambda}_1$ sufficiently close to $\lambda_1$. So by inductive hypothesis $h$ is differentiable up to order $r-1$ near $0$ at sufficiently small $\epsilon$ and the $(r-1)$-th derivative is 
        \begin{equation}
             h^{(r-1)}(\epsilon) = \int_{\Sigma} \sum_{k_1 + 2k_2 + \cdots (r-1)k_{r-1} = r-1} L(k_1, \cdots, k_{r-1}) \phi^{(k)} (X(\epsilon)) \prod_{j=1}^{r-1} (X^{(j)}(\epsilon))^{k_j}\,d\nu.
        \end{equation}
     We will follow the same strategy, proving the inductive step by showing the following two lemmas.
    Recall that $X_n$ and $X_n^{(j)}$'s are the finite truncates of $X,X^{(j)}$'s at $n$:    $$X_n =  \sum_{m=1}^n \lambda_{i_1 }\cdots \lambda_{i_m},\quad
    X_n^{(j)} = \frac{j!}{\lambda_1^j}\sum_{m=1}^{n} \lambda_{i_1} \cdots \lambda_{i_m} \binom{o(m)}{j} .$$

     \begin{lm}\label{Moment condition and bound on derivative: finite truncate lemma}
    
  There exists $c>0 $ and $0<\theta<1$ that depends on $\lambda_1,\lambda_2$, $t$ and $l$ such that 
    \begin{align*}
        |&\int \sum_{k_1 + 2k_2 + \cdots (r-1)k_{r-1} = r-1} L(k_1, \cdots, k_{r-1}) \phi^{(k)} (X) \prod_{j=1}^{r-1} (X^{(j)})^{k_j} \\
        &-\int \sum_{k_1 + 2k_2 + \cdots (r-1)k_{r-1} = r-1} L(k_1, \cdots, k_{r-1}) \phi^{(k)} (X_n) \prod_{j=1}^{r-1} (X^{(j)}_n)^{k_j}| < c\theta ^n.
    \end{align*}

\end{lm}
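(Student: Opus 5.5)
We prove Lemma \ref{Moment condition and bound on derivative: finite truncate lemma} along the lines of Lemma \ref{moment condition: finite truncate lemma}, replacing the explicit power $X^{t-k}$ by bounds coming from $\|\phi^{(r)}\|_{C^0}<\infty$. Since the number of Faà di Bruno terms and their coefficients are controlled by $r$, it suffices to bound, for each tuple $(k_1,\dots,k_{r-1})$ with $\sum jk_j=r-1$ and $k=\sum k_j\le r-1$, the two quantities
$$J_1=\Bigl|\int \phi^{(k)}(X)\Bigl(\prod_j (X^{(j)})^{k_j}-\prod_j (X^{(j)}_n)^{k_j}\Bigr)\Bigr|,\qquad J_2=\int |\phi^{(k)}(X)-\phi^{(k)}(X_n)|\prod_j (X^{(j)}_n)^{k_j}.$$
As in Lemma \ref{moment condition: finite truncate lemma}, we will reduce to $d_1=d_2=1$, so that $X>0$.

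The key observation is that $\|\phi^{(r)}\|_{C^0}<\infty$ together with $k\le r-1$ gives, by Taylor's theorem, $|\phi^{(k)}(x)|\preceq 1+|x|^{r-k}$, and also
$$|\phi^{(k)}(X)-\phi^{(k)}(X_n)|\preceq (X-X_n)\,(1+X^{r-k-1}).$$
This places us exactly in the setting of Lemma \ref{moment condition: finite truncate lemma} with $t=r$, and always in the favourable case $t\ge k+1$. For $J_1$, we expand the difference of products via $R^{(j)}_n$ into terms $\prod_j (X_n^{(j)})^{k_j-s_j}(R_n^{(j)})^{s_j}$ with $s=\sum s_j\ge 1$. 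Multiplying by $1+X^{r-k}$ gives the quantities $Q_1,Q_2$ of \eqref{Moment Condition: estimate 1} with $t=r$, together with a "pure" term in which $X^{r-k}$ is replaced by $1$. That pure term is the same $Q$-type estimate with total exponent $k\le r$. We then split the tail as $A_nB_n$ using \eqref{Moment Condition: Estimate of Tail} (Lemma \ref{Moment Condtion: Lemma for Binomial Difference}), bound the first-$n$ part by \eqref{Moment Condition: Estimate 3} (using Lemma \ref{Moment Condition: Power of Sum is less than Sum of Power} or the Hölder trick \eqref{Moment Condition: Holder Trick}), and bound $\int B_n^s$ uniformly by \eqref{Moment Condition: Estimate 4}. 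For $J_2$, the bound $(X-X_n)(1+X^{r-k-1})\prod_j (X_n^{(j)})^{k_j}$ is the second term of \eqref{Moment Condition: estimate 1} with $k$ replaced by $k+1$ and $s=1$; the constant part has exponent $k+1\le r$ and is handled identically.

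The step requiring care is that all of these estimates use $\frac{\lambda_1^u+\lambda_2^u}{2}<1$ for every exponent $u$ that appears. Here each such exponent is an integer between $1$ and $r$. By convexity of $u\mapsto \frac{\lambda_1^u+\lambda_2^u}{2}$, which equals $1$ at $u=0$ and is $<1$ at $u=r$, the value is $<1$ throughout $(0,r]$. The constant $\kappa=\max_{u\in[1,r]}\frac{\lambda_1^u+\lambda_2^u}{2}<1$ therefore works uniformly, and the polynomial factors $K_l(m)$ and $n^l$ are absorbed by choosing $\theta\in(\kappa,1)$ slightly larger, as before. The constants $c,\theta$ depend continuously on $\lambda_1$, so they remain valid for small $\epsilon$.

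The companion finite-difference lemma would follow the proof of Lemma \ref{Moment Condition: Finite Difference Converges to Derivative}. The one new point is the main term of $G_1$: since $\phi^{(r)}$ is bounded but not assumed continuous-differentiable further, we will use dominated convergence with the dominating function $(1+X_n^{\,r-k-1})\,X_n^{(1)}\prod_j (X_n^{(j)})^{k_j}\preceq (\sum_{m\le n}\lambda_{i_1}\cdots\lambda_{i_m}K_l(m))^{\le r}$, which is in $L^1$ by the same moment bound. The induction then closes exactly as in \S\ref{SSPfCompact}.
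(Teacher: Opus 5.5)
Your proposal is correct and follows the paper's argument. The boundedness of $\phi^{(r)}$ gives polynomial bounds on $|\phi^{(k)}|$ and, via the mean value theorem, on $|\phi^{(k)}(X)-\phi^{(k)}(X_n)|$; this reduces $J_1$ and $J_2$ to the estimates of Lemma~\ref{moment condition: finite truncate lemma} with $t=r$, always in the case $t\ge k+1$. The only cosmetic difference is that the paper uses $X\ge 1$ to write $|\phi^{(k)}(x)|\le M x^{r-k}$ directly, while you carry $1+|x|^{r-k}$ and treat the constant part as a lower-exponent term. That is harmless, and slightly more robust in the mean-value step, since it does not need the intermediate point to be $\ge 1$.
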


\begin{lm}\label{Moment Condition and bound on derivative: Finite Difference Converges to Derivative}

    For any $\epsilon\neq0$, let $n = n(\epsilon)\in\mathbb{N} $ be such that there exists some constant d satisfying $\frac{1}{d} \log\frac{1}{|\epsilon|} \le n \le d \log\frac{1}{|\epsilon|},\forall \epsilon\neq0$. Then 
    \begin{align*}
        \lim_{\epsilon\to 0} \frac{1}{\epsilon} &\left(\int \sum_{k_1 + 2k_2 + \cdots (r-1)k_{r-1} = r-1} L(k_1, \cdots, k_{r-1}) \phi^{(k)} (X_n(\epsilon)) \prod_{j=1}^{r-1} (X^{(j)}_n (\epsilon))^{k_j}\right. \\
        &-\left.\int \sum_{k_1 + 2k_2 + \cdots (r-1)k_{r-1} = r-1} L(k_1, \cdots, k_{r-1}) 
        \phi^{(k)} (X_n) \prod_{j=1}^{r-1} (X^{(j)}_n)^{k_j}\right) \\
        &=\int \sum_{k_1 + 2k_2 + \cdots (l+1)k_l = l+1} L(k_1, \cdots, k_{l+1}) \phi^{(k)} (X) \prod_{j=1}^{l+1} (X^{(j)})^{k_j}.
    \end{align*}
    Here $\displaystyle X_n (\epsilon):= \sum_{m=1}^n \lambda_{i_1}(\epsilon) \cdots \lambda_{i_m}(\epsilon)$ and 
    $$X_n^{(j)} (\epsilon) := \frac{\,d^j}{\,d \epsilon^j}X_n (\epsilon) =  \frac{j!}{\lambda_1^j}\sum_{m=1}^{n} \lambda_{i_1}(\epsilon) \cdots \lambda_{i_m}(\epsilon) \binom{o(m)}{j}.$$
    
 \end{lm}

Once the above two lemmas are proved, we can conclude the proof in  the same way as we did in the proof of Theorem \ref{infinite differentiability for compactly supported test function}. Therefore, the remainder of this section will be devoted to proving the two lemmas.
\begin{proof}
[Proof of Lemma \ref{Moment condition and bound on derivative: finite truncate lemma}]
By the same reasoning as we did in its counterpart,  Lemma~\ref{moment condition: finite truncate lemma}, we need to show
that
  \begin{align*}
        & \int  \phi^{(k)} (X) (\prod_{j=1}^{l} (X^{(j)})^{k_j}-\prod_{j=1}^{l} (X^{(j)}_n)^{k_j}) 
        +\int  |(\phi^{(k)} (X)-\phi^{(k)} (X_n)) \prod_{j=1}^{l} (X^{(j)}_n)^{k_j}|  
        := J_1 + J_2 .
   \end{align*}
    is exponentially small in $n$. 

\noindent
    We first deal with $J_1$.  Recall the notation $\displaystyle X^{(j)} \!\!-\!\! X_n^{(j)} \!\!=\!\! \frac{j!}{\lambda_1^j} \!\! \sum_{m=n+1}^{\infty} \!\!\lambda_{i_1}\cdots \lambda_{i_m}\binom{o(m)}{j}\!\! =: R_n^{(j)}$. As before, we can further reduce  the problem to  estimating, for any given tuple $(k_1,\cdots,k_{r-1})$ satisfying 
    $\DS \sum_{j=1}^{r-1} jk_j = r-1$ and any tuple $(s_1,\cdots,s_{r01})$ with $0\le s_j\le k_j$, $j=1,\cdots,r-1$, the following integral
  $$\int \phi^{(k)}(X) \prod_{j=1}^{r-1} (X_n^{(j)})^{k_j-s_j}(R^{(j)}_n)^{s_j}.$$

Since $\phi^{(r)}$ is bounded and $k\le r-1$, there exists some constant $M$ such that
\begin{equation}\label{Polynomial Bound on Derivatives of phi}
    \phi^{(k)}(x) \le M x^{r-k},\;\forall x \ge 1, \;\forall\, 0\le k\le r.
\end{equation}

Since $X\ge1$ by definition, we have

$$\int \phi^{(k)}(X) \prod_{j=1}^{r-1} (X_n^{(j)})^{k_j-s_j}(R^{(j)}_n)^{s_j} \le M \int X^{r-k} \prod_{j=1}^{r-1} (X_n^{(j)})^{k_j-s_j}(R^{(j)}_n)^{s_j}.$$
 With $\frac{\lambda_1^r+\lambda_2^r}{2}<1$, we are in the same situation as in 
 \eqref{Similar Estimate 1 in moment and bouded derivative} and the rest of the proof follows verbatim.

 Next we consider $J_2$. We have
 $$      |(\phi^{(k)} (X)-\phi^{(k)} (X_n)) \prod_{j=1}^{r-1} (X^{(j)}_n)^{k_j}|
      =\phi^{(k+1)}(X_n + \xi_n (X-X_n))(X-X_n)  \prod_{j=1}^{r-1} (X^{(j)}_n)^{k_j}$$
$$      \le M (X_n + \xi_n (X-X_n))^{r-k-1}\prod_{j=1}^{r-1} (X^{(j)}_n)^{k_j}
      \le M X^{r-k-1}\prod_{j=1}^{r-1} (X^{(j)}_n)^{k_j},
$$
  where in the last inequality we have used that $r-k-1\ge 0$. Now we are in the same situation as in the second line of 
  \eqref{Similar estimate 2 in moment condition and bounded derivative}, so the proof follows by the same argument. This finishes the proof of Lemma \ref{Moment condition and bound on derivative: finite truncate lemma}.
  \end{proof}
\begin{proof}
  [Proof of Lemma \ref{Moment Condition and bound on derivative: Finite Difference Converges to Derivative}]
   As in the proof of Lemma \ref{Moment Condition: Finite Difference Converges to Derivative}, we only need to show, for any given tuple of integers $(k_1,\cdots k_{r-1})$ satisfying $\DS \sum_{i=1}^{r-1} (r-1)k_{r-1} = r-1$, that 
      \begin{equation}\label{Moment Condition: Finite Difference converges to derivative, step 1}
          \lim_{\epsilon\to 0} \frac{1}{\epsilon} \left(\int  \phi^{(k)} (X_n(\epsilon)) \prod_{j=1}^{l} (X^{(j)}_n (\epsilon))^{k_j} -\int  \phi^{(k)} (X_n) \prod_{j=1}^{l} (X^{(j)}_n)^{k_j}\right) 
          \end{equation}
        $$=\int  \phi^{(k+1)} (X)X^{(1)} \prod_{j=1}^{l} (X^{(j)})^{k_j} + 
        \int  \phi^{(k)} (X) \sum_{j=1}^{l} (k_j (X^{(j)})^{k_j -1} X^{(j+1)}\prod_{i\neq j} (X^{(i)})^{k_i}) ,$$

where $\DS k:=\sum_{i=1}^{l} k_i$. As before, the left hand side is divided into two parts.
\begin{align*}
    &\int  \phi^{(k)} (X_n(\epsilon)) \prod_{j=1}^{l} (X^{(j)}_n (\epsilon))^{k_j}-\int  \phi^{(k)} (X_n) \prod_{j=1}^{l} (X^{(j)}_n)^{k_j})\\
    =&\int  (\phi^{(k)} (X_n(\epsilon))-\phi^{(k)}(X_n))
    \prod_{j=1}^{l} (X^{(j)}_n )^{k_j} 
    +\int  \phi^{(k)} (X_n(\epsilon)) (\prod_{j=1}^{l} (X^{(j)}_n (\epsilon))^{k_j}-\prod_{j=1}^{l} (X^{(j)}_n)^{k_j})) \\
    =:& G_1 + G_2.
\end{align*}
We deal with $G_2$ first. Recall for $j=0,\cdots,l$ the notations
 $$X_n^{(j)}(\epsilon) = X_n^{(j)} + \sum_{r=1}^{n} b^{(j)}_{n,r} \epsilon^r=: X^{(j)}_n + T^{(j)}_n,$$
 and
 \begin{equation}
     b^{(j)}_{n,r}:=\frac{j!}{\lambda_1^{r+j}} \sum_{m=1}^n \lambda_{i_1}\cdots\lambda_{i_m} \binom{o(m)-j}{r} \binom{o(m)}{j}.
 \end{equation}
 
By the same reasoning as in Lemma \ref{Moment Condition: Finite Difference Converges to Derivative}, it suffices to calculate the limit 

\begin{equation}
    \lim_{\epsilon\to 0} \frac{1}{\epsilon}\int \phi^{(k)}(X_n) \prod_{j=1}^{r-1} (X^{(j)}_n)^{k_j-s_j} (T^{(j)}_n)^{s_j} 
\end{equation}
for each given tuple $(s_1,\cdots,s_{r-1})$. We claim that 
\begin{align}
   & \lim_{\epsilon\to 0}\frac{1}{\epsilon} \int \phi^{(k)}(X_n) \prod_{j=1}^{r-1} (X^{(j)}_n)^{k_j-s_j}(T^{(j)}_n)^{s_j} \\
   &=
    \begin{cases}
        0, \text{ if }\DS \sum_{j=1}^{r-1} s_j \ge 2 \\
        \sum_{1\le j\le r-1,k_j \ge 1} \int \phi^{(k)}(X) \prod_{i\neq j}(X^{(i)})^{k_i} k_j (X^{(j)})^{k_j-1} X^{(j+1)}, \text{ if }\DS \sum_{j=1}^{r-1} s_j = 1,
    \end{cases}
\end{align}
which agrees with taking formal derivative. 

The case $\DS \sum_{j=1}^{r-1} s_j\ge 2$ holds since $\phi^{(k)}(X_n)\le M X_n^{r-k}$ and the rest of the proof follows the same lines as in the first case in \eqref{Similar estimate 3 of moment condition and bounded derivative}. 

As for the case $\DS \sum_{j=1}^{r-1} s_j= 1$, similar as before we need to show that for each $k_j\ge 1$,

\begin{equation}
    \lim_{\epsilon\to 0} \int  \phi^{(k)}(X_n(\epsilon))k_j \prod_{i\neq j} (X_n^{(i)})^{k_i} (X_n^{(j)})^{k_j-1}  
    \left(\sum_{r=1}^{n-1} b^{(j)}_{n,r+1} \epsilon^r \right) =0,
\end{equation}
and 
$$
     \lim_{\epsilon\to 0} \int  \phi^{(k)}(X_n(\epsilon))k_j \prod_{i\neq j} (X_n^{(i)})^{k_i} (X_n^{(j)})^{k_j-1}  X_{n}^{(j+1)} = 
\int \phi^{(k)}(X)k_j \prod_{i\neq j} (X^{(i)})^{k_i} (X^{(j)})^{k_j-1}  X^{(j+1)}.$$
To prove the first identity, note that $\DS \phi^{(k)}(X_n(\epsilon))\le MX_n(\epsilon)^{r-k}\le 2MX_n^{r-k}$, where the second inequality holds for sufficiently small $\epsilon$, since $\frac{X_n(\epsilon)}{X_n}\to 1$ as $\epsilon\to 0$. The rest part of the proof is the same as the proof of \eqref{Similar estimate 4 in moment conditiona and bounded derivative}.

To prove the second identity, note that the integrand on the left hand side converges a.e. to the integrand on the right hand side. Moreover, since $X_n(\epsilon),X_n \ge 1$ and $\frac{X_n(\epsilon)}{X_n}\!\!\to \!\!1$ as $\epsilon\to 0$, we have
\begin{equation*}
     \phi^{(k)}(X_n(\epsilon))k_j \prod_{i\neq j} (X_n^{(i)})^{k_i} (X_n^{(j)})^{k_j-1}  T_{n}^{(j)}\le2 k_j M X_n^{r-k} | \prod_{i\neq j} (X_n^{(i)})^{k_i} (X_n^{(j)})^{k_j-1}  T_{n}^{(j)}|.
\end{equation*}
Since $\frac{\lambda_1^r+\lambda_2^2}{2}<1$, the above right hand side converges both a.e and in $L^1$, as the proof of identity \eqref{Moment Condition: Estimate of first order term in G_2} shows. Therefore, the generalized dominated convergence theorem \ref{Generalized Dominated Convergence Lemma} confirms the second identity.

Finally we consider $G_1$. As in the proof of \eqref{Moment Condition: Finite Difference Converges to Derivative}, it suffices to show
   \begin{equation}
     \lim_{\epsilon\to 0} \int  (\phi^{(k+1)} (X_n + \xi_n(X_n(\epsilon)-X_n)) \prod_{j=1}^{l} (X^{(j)}_n )^{k_j} ( \sum_{r=1}^{n-1}b^{(0)}_{n,r+1}\epsilon^{r}) = 0
   \end{equation}
   and
   \begin{equation}
      \lim_{\epsilon\to 0} \int  (\phi^{(k+1)} (X_n + \xi_n(X_n(\epsilon)-X_n)) \prod_{j=1}^{l} (X^{(j)}_n )^{k_j}X_n^{(1)}  = \int  \phi^{(k+1)} (X) \prod_{j=1}^{l} (X^{(j)} )^{k_j}X^{(1)}.
   \end{equation}
    Given the proof of\eqref{Moment Condition: Estimate of G_1, tail term} and \eqref{Moment Condition: Estimate of G_1, first order term}, it is enough to observe that 
    $$ (\phi^{(k+1)} (X_n + \xi_n(X_n(\epsilon)-X_n))\le 2MX_n^{r-k-1},\text{ for sufficiently small }\epsilon.$$
    The proof of Lemma \ref{Moment Condition and bound on derivative: Finite Difference Converges to Derivative} is complete.
\end{proof}    
Lemmata \ref{Moment condition and bound on derivative: finite truncate lemma} and \ref{Moment Condition and bound on derivative: Finite Difference Converges to Derivative}
establish
Theorem \ref{Differentiability of Stationary measure}. 
    \end{proof}

%% file: Extensions_and_open_problems.tex
\section{Extensions and open problems}
\label{Section: Extensions and open problems}

 We end by discussing several generalization of the previous results.

\subsection{Differentiability in other directions}

In this subsection we discuss differentiability with respect to $\lambda_2,d_1,d_2$.

 First, one could define $g_{\phi}(\epsilon) = g(\epsilon) = \int_{\mathbb{R}} \phi\,d\mu_{\lambda_1,\lambda_2+\epsilon}$ and study the derivative with respect to $\lambda_2$. All the above theorems remains valid with the same proof, and the formulae for the derivative is replaced with the formal derivative taken with respect to $\lambda_2$. 

Likewise, if one studies the derivative with respect to $d_1$ or $d_2$, then the same assumptions give the same conclusions in each theorem, with the derivative equal to the formal derivative. But in this case, the formal derivative becomes much simpler. For example, let us consider the derivative with respect to $d_1$. That is, define 
$$h(d) = \int_{\Sigma} \phi(\sum_{n=1}^{\infty} d_{i_n} \lambda_{i_1}\cdots\lambda_{i_{n-1}}) \,d\nu$$
where $d_{1} = d,d_2 = d_2$ and $d_2$ is given. By taking formal derivative, we can prove

$$h'(d) =  \int_{\Sigma} \phi'(\sum_{n=1}^{\infty} d_{i_n} \lambda_{i_1}\cdots\lambda_{i_{n-1}})  \left(\sum_{n=1}^{\infty} \lambda_{i_1}\cdots\lambda_{i_{n-1}}\delta_{n,1}\right)\,d\nu$$
where $\delta_{n,1} = 1$ if $i_n = 1$ and is zero otherwise. If we take higher order derivatives, whenever a derivative hits the term $\DS \sum_{n=1}^{\infty} \lambda_{i_1}\cdots\lambda_{i_{n-1}}\delta_{n,1}$ it yields zero. So, under the correct assumptions, we have
$$h^{(l)}(d) =  \int_{\Sigma} \phi^{(l)}(\sum_{n=1}^{\infty} d_{i_n} \lambda_{i_1}\cdots\lambda_{i_{n-1}})  \left(\sum_{n=1}^{\infty} \lambda_{i_1}\cdots\lambda_{i_{n-1}}\delta_{n,1}\right)^{l}\,d\nu.$$

The above discussion readily implies that, if $d_1,d_2,\lambda_1,\lambda_2$ are $C^1$ (resp. $C^l$) functions defined on some neighborhood $(-\epsilon,\epsilon)$ of $0$, then $h_\phi$ will be differentiable (resp. $l$-times differentiable) with the derivative coinciding with the formal derivative, under the same assumption as in the main theorems. For instance, if $\phi\in C_c^\infty(\mathbb{R})$, then $h_\phi$ is $C^\infty$, and 
$$h_\phi'(0) = \int_{\Sigma} \phi'(X) \left(\lambda_1'(0) \frac{\partial X}{\partial \lambda_1} +\lambda_2'(0) \frac{\partial X}{\partial \lambda_2}+d_1'(0) \frac{\partial X}{\partial d_1}+d_2'(0) \frac{\partial X}{\partial d_2}\right) \,d\nu(i).$$
Here $\DS X = X(\lambda_1,\lambda_2,d_2,d_2;i) = \sum_{m=1}^\infty  d_{i_m} \lambda_{i_1}\cdots \lambda_{i_{m-1}}$ with $i = (i_1,i_2,\cdots)\in \Sigma := \{1,2\}^{\mathbb{N}}$. And $\frac{\partial X}{\partial \lambda_j}, \frac{\partial X}{\partial d_j}, j=1,2$ are the formal derivative, taken by viewing $X$ as a power series. For higher order derivatives one gets formal derivatives of higher order.

\subsection{Multiple maps}

Next, we address the case of multiple maps $f_i(x) = \lambda_i x+d_i,i=1,2,\cdots,k$ assigned with probability $(p_1,\cdots,p_k)$. Here we assume $k\ge 2$, $\DS \sum _i p_i = 1$ and there is at least one contraction and one expansion among $f_1,\cdots, f_k$. Moreover, the system is contracting on average: $\DS \sum_i p_i \log |\lambda_i|<0$ and $f_i$'s do not share the same fixed point. 

We claim that in this case, the {\it actual derivative equals formal derivative} 
assertions in our main results remain valid, under mild change of the assumption. More precisely:

Theorem \ref{infinite differentiability for compactly supported test function} remains true if in addition we assume that each $\lambda_i,d_i>0,\forall i$. This is because the key Varadham's lemma-type of estimate (see \ref{Equation: Varadhan's Type of Estimate}) will still be true, given that 
$\DS \sum_{m=1}^N d_{i_m} \lambda_{i_1} \cdots \lambda_{i_{m-1}}$ is monotone increasing in $N$ and the systems is contracting on average.

Theorem \ref{Differentiability of Stationary measure} will hold if we replace the assumption $\DS \frac{\lambda_1^r+\lambda_2^r}{2}<1$ by $\DS \sum_i p_i |\lambda_i|^r<1$. To see this, observe that the proof of the original theorem relies critically on two facts:

\begin{enumerate}
    \item $|\phi^{(l)}(x)|\preceq |x|^{r-l}+1, l=0,1,\cdots,r$, since $||\phi^{(r)}||_{C^0}<\infty$.
    \item $\DS \frac{\lambda_1^t+\lambda_2^t}{2}<1, \forall0<t\le r$. And this is due to the behavior of the function $\DS f(t):=\log \frac{\lambda_1^t+\lambda_2^t}{2} , t\ge 0$: $f(0)=0,f'(0)<0$, $f''>0$ and $f(\infty) = \infty$. Item $(2)$ will follow from these by an elementary analysis. 
\end{enumerate}

These two facts together yield a uniform integrability estimate, which essentially finishes the proof. Now under the new setting, we have two similar facts that guarantee uniform integrability:

\begin{enumerate}
    \item $|\phi^{(l)}(x)|\preceq |x|^{r-l}+1, l=0,1,\cdots,r$, since $||\phi^{(r)}||_{C^0}<\infty$.
    \item $\DS\sum_{i=1}^k  p_i |\lambda_i|^t<1, \forall0<t\le r$. 
\end{enumerate}

The first claim is clearly true. The second claim holds since its left hand side comes from a moment generating function. 
Namely, consider a random variable $Y$ which takes values
$\log |\lambda_i|$  with probability $p_i$, $i=1,\cdots k$
and define 
$\DS f(t) = \log \mathbb{E}[e^{tY}]$ for $ t\ge 0.$
Then a direct computation gives
$$f'(t) = \frac{\mathbb{E}[e^{tY}Y]}{\mathbb{E}[e^{tY}]}, \quad f''(t) = \frac{\mathbb{E}[Y^2 e^{tY}]\mathbb{E}[e^{tY}] - \mathbb{E}[Ye^{tY}]^2}{\mathbb{E}[e^{tY}]^2}.$$

It readily follows that $f(0) = 0$, $f'(0)<0$ (by contracting on average), $f(\infty) = \infty$ (since there is at least one expansion). Moreover, $f''>0$ by Cauchy-Schwartz inequality. Thus the second item follows by simple analysis.

Finally, Theorem \ref{Smoothness for Finite Moment Condition} will also be true, if one changes the assumption $\DS \frac{\lambda_1^t+\lambda_2^t}{2}<1$ into $\DS \sum_i p_i |\lambda_i|^t<1$, by the same argument as above.

\subsection{Degenerate case: Common Fixed Point}

So far we have always assumed that the maps $f_i$ do not share a common fixed point. It turns out if there are only two maps, then  linear response still holds, even in the degenerate case.

More specifically, let  $f^t_1(x) = \lambda_1(t) x+d_1(t),f^t_2(x) = \lambda_2(t) x+d_2(t); \; p = (\frac{1}{2},\frac{1}{2})$ be a $C^\infty$ one parameter family of maps, with $0<\lambda_1<1<\lambda_2$ and $\lambda_1\lambda_2<1$ uniformly for $t\in(-\epsilon,\epsilon)$. Moreover assume that at $t=0$ the two maps have a same fixed point, or equivalently, $\DS \frac{d_1(0)}{1-\lambda_1(0)} = \frac{d_2(0)}{1-\lambda_2(0)}$. We also assume this degeneracy is isolated, so that $\DS \frac{d_1(t)}{1-\lambda_1(t)} \neq \frac{d_2(t)}{1-\lambda_2(t)}, \;\forall t\in(-\epsilon,\epsilon)\setminus \{0\}$.

Denote the stationary measure for $\{f_1^t,f_2^t;p\}$ by $\mu_t$. The goal is to study the differentiability of $h(t) = h_\phi(t) = \int\phi \,d\mu_t$ at $t=0$.

We claim that Theorems \ref{infinite differentiability for compactly supported test function}, \ref{Smoothness for Finite Moment Condition} and \ref{Differentiability of Stationary measure} remain true. The reason is that the proofs of these three theorems are based on the following two facts:
\begin{enumerate}
    \item $\mu_t$ is the law of  $X(t) = X(t;i) = \sum_{m=1}^\infty d_{i_m}(t)\lambda_{i_1}(t)\cdots \lambda_{i_{m-1}}(t),\; i = (i_1,i_2,\cdots)\in \Sigma $. Here recall that $\Sigma = \{1,2\}^\mathbb{N}$, $\nu = p^\mathbb{N}$, and $ i = (i_1,i_2,\cdots)\in \Sigma \text{ has law } p$. 
    \item After a change of variable, one can make $d_1,d_2$ to have the same sign, so that $\sum_{m=1}^N d_{i_m}(t)\lambda_{i_1}(t)\cdots \lambda_{i_{m-1}}(t)$ is monotone in $N$.
\end{enumerate}

In the present case, fact $1$ survives because when $t=0$, $\mu_t = \delta_{x_0}$ is the delta mass on the fixed point $x_0 = \frac{d_1(0) - d_2(0)}{\lambda_2(0) - \lambda_1(0)}$, which is indeed equal to the law of 
$\DS \sum_{m=1}^\infty d_{i_m}(0)\lambda_{i_1}(0)\cdots \lambda_{i_{m-1}}(0)$. To see this, note that for any fixed $N\in\mathbb{N}$,

$$
    \sum_{m=1}^N d_{i_m}(0)\lambda_{i_1}(0)\cdots \lambda_{i_{m-1}}(0) = \sum_{m=1}^N d_{i_m}(0)\lambda_{i_1}(0)\cdots \lambda_{i_{m-1}}(0) + \lambda_{i_1}(0) \cdots\lambda_{i_N}(0)x_0 - \lambda_{i_1}(0) \cdots\lambda_{i_N}(0)x_0 
    $$$$
    =f_{i_1}^0\circ \cdots f_{i_N}^0 (x_0) -  \lambda_{i_1}(0) \cdots\lambda_{i_N}(0)x_0\\
    =x_0 - \lambda_{i_1}(0) \cdots\lambda_{i_N}(0)x_0.
$$
As $N\to\infty$, the above discrete measure converges weakly to $\delta_{x_0}$ because $\mathbb{E}[\log\lambda_i(0)]\!\!<\!\!0$.

The second fact can be compensated as follows. Define a family of change of variables $c^t(x) = x + \frac{d_2(t)}{1-\lambda_2(t)}$. One can easily check that $f_i^t \circ c^t = c^t \circ \tilde{f}_i^t$ for $i =1,2$ where $\tilde{f}_1^t(x) = \lambda_1(t)x + \epsilon(t) , \tilde{f}_2^t(x) = \lambda_2(t)x$ and $\epsilon(t) = \frac{d_1(t)(1-\lambda_2(t)) - d_2(t)(1-\lambda_1(t))}{1-\lambda_2(t)}$. 

Denote by $\tilde{\mu}_t$ the stationary measure for $\{\tilde{f}_1^t,\tilde{f}_2^t;p\}$. By the same argument at the beginning of \S \ref{Section: Infinite Differentiability for Smooth Compactly Supported Test Function}, for any $\phi$, 

$$h_\phi(t) = \int\phi\,d\mu_t = \int\phi\circ c^t \,d\tilde{\mu}_t = \int\phi(x+\epsilon(t)) \,d\tilde{\mu}_t(x).$$
Moreover, for the system $\{\tilde{f}_1^t,\tilde{f}_2^t;p\}$, the translation factors $d_1(t) = \epsilon(t),d_2(t) = 0$ always have the same sign, which leads 
$$\forall M >0,\;\;\mathbb{E}[\sum_{m=N}^\infty d_{i_j}(t) \lambda_{i_1}(t)\cdots \lambda_{i_{m-1}}(t) 1_{\{|X(t;i)|\} \le M}   ] \le C\theta^N$$
for some $C=C(M,\lambda_1(0),\lambda_2(0))>0$ and $\theta = \theta(\lambda_1(0),\lambda_2(0))\in(0,1)$, uniformly for $N\in\mathbb{N}$ and $t\in(-\epsilon,\epsilon)$. This in turn yields uniform integrability and one can argue the same way as in Lemma \ref{finite approximation} and Lemma \ref{second step:finite difference converges to derivative} to show that, if $\phi\in C_c^\infty$, then $\tilde{h}_\phi(t):= \int\phi\,d\tilde{\mu_t}$ is $C^\infty$ at $0$ and the derivatives coincide with formal ones.

Therefore, linear response holds for the original system $\{f_1^t,f_2^t;p\}$ by chain rule:

$$h_\phi'(t) = \int\phi'(x+\epsilon(0)) \epsilon'(0) \,d\tilde{\mu}_0(x)+ \frac{d}{dt}\int\phi(x+\epsilon(0))\,d\tilde{\mu}_t =\int\phi'(x) \epsilon'(0) \,d\tilde{\mu}_0(x)+\tilde{h}'_\phi(0).$$
The last equality holds because $\epsilon(0)= 0$. Higher order derivatives can be computed in the same way.

\subsection{An open Question}

We end this section by pointing out a case we do not know how to deal with. Note that in Theorem \ref{infinite differentiability for compactly supported test function}, we relied on the fact that $d_1,d_2,\lambda_1,\lambda_2>0$ so that the sum $\DS \sum_{m=1}^{N} d_{i_m}\lambda_{i_1}\cdots\lambda_{i_{m-1}}$ is monotone increasing in $N$. And we are able to assume $d_1,d_2>0$ because there are two functions thus a change of variable makes them positive (see the discussion at the beginning of \S \ref{Section: Infinite Differentiability for Smooth Compactly Supported Test Function}). In general, if the IFS consists of  more than two maps, and the $\lambda_i$'s and $d_i$'s are such that 
$\DS \sum_{m=1}^\infty d_{i_j}\lambda_{i_1}\cdots\lambda_{i_{m-1}}
$ can have alternating terms, then the proof does not work. The author does not know whether Theorem \ref{infinite differentiability for compactly supported test function} will still hold in this case.